\tikzstyle arrowstyle=[scale=2]
\tikzstyle directed=[postaction={decorate,decoration={markings,
    mark=at position .65 with {\arrow[arrowstyle]{stealth}}}}]
\tikzstyle reverse directed=[postaction={decorate,decoration={markings,
    mark=at position .65 with {\arrowreversed[arrowstyle]{stealth};}}}]
    \tikzstyle left directed=[postaction={decorate,decoration={markings,
    mark=at position -.62 with {\arrow[arrowstyle]{stealth}}}}]
\tikzstyle left reverse directed=[postaction={decorate,decoration={markings,
    mark=at position -.62 with {\arrowreversed[arrowstyle]{stealth};}}}]
\newtheorem{thm}{Theorem}[section]
\newtheorem{cor}[thm]{Corollary}
\newtheorem{lem}[thm]{Lemma}
\newtheorem{clm}[thm]{Claim}
\newtheorem{prop}[thm]{Proposition}
\theoremstyle{remark}
\newtheorem{rmk}[thm]{Remark}
\theoremstyle{definition}
\newtheorem{Def}[thm]{Definition}                                        %
\def \R {\mathbb R}
\title{An elliptic theory of indicial weights  and  applications to non-linear geometry problems}
\date{\vspace{-5ex}}
\begin{document}
\maketitle
\begin{abstract} Given an elliptic operator  $P$     on a non-compact manifold (with proper asymptotic conditions), there is a discrete set of numbers called indicial roots.  It's known that $P$  is Fredholm between  weighted Sobolev spaces  if and only if the weight is not indicial. We show that an elliptic theory exists even when the weight is indicial. We also discuss some simple applications to Yang-Mills theory and  minimal surfaces.  
\end{abstract}

\section{Introduction}
\subsection{The theory}
The elliptic theories based on weighted Sobolev (Schauder) spaces usually concern a  discrete set of real numbers. If a number is  in the set, we say that it is indicial (or is an indicial root).  A classical fact says that  on a non-compact complete manifold,    an elliptic operator (with proper asymptotic conditions) is Fredholm between  weighted Sobolev spaces  if and only if the weight is not  indicial. For earlier pioneering work, please see   \cite{LockhartActa}, \cite{Lockhart}, and  \cite{MelroseMendoza}. For  more recent work, please see \cite{Mazzeo}.

 Following  elementary ideas, we show that there is an elliptic theory even if the weight is indicial: first, we add   polynomial weights  \{compare (\ref{equ weighted L2 norm in Def Graph norm}) to \cite[(1.3)]{Lockhart}\} to refine the space; second, we consider graph norms with respect to the model operator  [see (\ref{eqnarray the theories in Thm Fredholm})].

In this note we only consider first and second-order operators modelled on the following. 
\begin{Def}\label{Def TID operators}  Let $Y$ be a $(n-1)-$dimensional  Riemannian manifold without boundary (which does not have to be connected). Let $E,\ F$ be smooth vector-bundles over $Y$ equipped with  smooth Hermitian metrics. Given arbitrary bundle isomorphisms $\sigma_{1}: E\rightarrow F,\ \sigma_{2}: E\rightarrow E$, we say that an   operator $P^{0}$ is TID (translation-invariant and   diagonal) if  
\begin{equation}\label{equ Def P0 TID}P^{0}=\sigma_{1}(-B_{P^0}-a_{1}\frac{\partial}{\partial t}+a_{2}\frac{\partial^{2}}{\partial t^{2}})\sigma_{2}\ \textrm{and the following holds}.\end{equation} 
\begin{itemize}
\item $a_{2}=0$ or $1$. $a_{1}=-1$ when $a_{2}=0$ (always achievable by normalization).
\item  When $a_{2}=0$, $B_{P^{0}}$ is a first-order self-adjoint elliptic differential operator $C^{\infty}(Y,E)\rightarrow C^{\infty}(Y,E)$.  When $a_{2}=1$, $B_{P^{0}}$ is second-order, simple,   elliptic, and self-adjoint $C^{\infty}(Y,E)\rightarrow C^{\infty}(Y,E)$  (see Definition \ref{Def Simple 2nd order operators}).
  \end{itemize} 
 \end{Def}
\begin{rmk}\label{Rmk eigen-basis exists} For any TID operator $P^{0}$, $Spec B_{P^{0}}$ is real and discrete. Moreover, there is a complete eigen-basis of $B_{P^{0}}$. 
\end{rmk}
\begin{Def} \label{Def Indicial weights} Let  $P^{0}$ be TID, and $(\beta,\Lambda)$ be a pair of real numbers such that $\Lambda\in Spec(B_{P^{0}})$. When  $P^{0}$ is first-order, we say that $(\beta,\Lambda)$ is  $P^{0}-$indicial if $\beta=\Lambda$. When $P^{0}$ is second-order, we say that $(\beta,\Lambda)$ is  $P^{0}-$indicial if
\begin{enumerate} 
\item  $\beta\neq \frac{a_{1}}{2}$ and $\Lambda-\beta^{2}+a_{1}\beta=0$, or if
\item $\Lambda\leq -\frac{a_{1}^{2}}{4}$ and $\beta= \frac{a_{1}}{2}$.
\end{enumerate}  
In the second case above, we say  that $(\beta,\Lambda)$  is  $P^{0}-$super indicial. We say that $\beta$ is $P^{0}-$indicial (super indicial) if there is a $\Lambda\in SpecB_{P^{0}}$ such that  $(\beta,\Lambda)$ is $P^{0}-$indicial (super indicial). This is consistent with the "$\mathfrak{D}_{A}$" in \cite[page 417]{Lockhart}, translated to our setting.
\end{Def}
 Let $N$ be a  complete Riemannian manifold with finite many cylindrical ends, we  consider asymptotically  TID operators $P: C^{\infty}(N,E)\rightarrow C^{\infty}(N,F)$.  This class  should include most of the Dirac and Laplace-type operators in geometry.  In the setting as Theorem \ref{Thm Fredholm in the cyl setting},
\begin{eqnarray}\label{eqnarray the theories in Thm Fredholm}
& & P:\ \widehat{W}^{k+m_{0},p}_{-\beta,\gamma,b-1}(N,E)\longrightarrow W^{k,p}_{-\beta,\gamma,b}(N,F) \ \ \ \ \ \ \ \ \ \ \ \ \  (\square|^{Sobolev,p}_{-\beta,\gamma,b})\ (\textrm{see  Definition}\ \ref{Def graph norm},\ \ref{Def Global graph norm and the beta vector}\nonumber\\& &
P:\ \widehat{C}^{k+m_{0},\alpha}_{-\beta,\gamma,b-1}(N,E)\longrightarrow C^{k,\alpha}_{-\beta,\gamma,b}(N,F) \ \ \ \ \ \ \ \ \ \ \ \ \ \ \   (\square|^{Schauder}_{-\beta,\gamma,b})\ \ \textrm{ for the norms}).
\end{eqnarray}
are bounded operators. Moreover, when $\beta$ is not indicial and  $\gamma=0$, as subspaces of $L^{2}_{loc}$,
\begin{equation}\begin{array}{c} \widehat{W}^{k+m_{0},p}_{-\beta,\gamma,b-1}(N,E)=W^{k+m_{0},p}_{-\beta}(N,E),\  \widehat{C}^{k+m_{0},\alpha}_{-\beta,\gamma,b-1}(N,E)=C^{k+m_{0},\alpha}_{-\beta}(N,E), \\ W^{k,p}_{-\beta,\gamma,b}(N,F)=W^{k,p}_{-\beta}(N,F),\  C^{k,\alpha}_{-\beta,\gamma,b}(N,F)=C^{k,\alpha}_{-\beta}(N,F). \end{array}\end{equation}
Thus  our theory  generalizes the  one in \cite{Lockhart} \{for  first and second-order operators, c.f \cite[(1.3)]{Lockhart}\}.  Assuming the weights are the same on the ends, our main result states as follows. 
\begin{thm}\label{Thm Fredholm in the cyl setting} Suppose $P$  is a $\beta-ATID$ elliptic  operator (see Definition \ref{Def ATID}), and $\beta$ is not $P^{0}-$super indicial. Then for any $0\leq k\leq k_{0}-2$, $\alpha\in (0,1)$, $p\geq 2$, 
\begin{itemize}
\item $(\square|^{Sobolev,p}_{-\beta,\gamma, b})$ is Fredholm if  $b\neq 1-\frac{1}{p}$ or $\beta\ \textrm{is not}\ P^{0}-\textrm{indicial}$;
\item $(\square|^{Schauder}_{-\beta,\gamma,b})$ is Fredholm if  $b\neq 1$ or $\beta\ \textrm{is not}\ P^{0}-\textrm{indicial}$.
\end{itemize} 
\end{thm}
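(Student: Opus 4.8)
The plan is to construct a two-sided parametrix $Q$ for $P$ in each of the two scales of (\ref{eqnarray the theories in Thm Fredholm}) for which $PQ-I$ and $QP-I$ are compact; Fredholmness is then immediate. Write $N=N_{0}\cup\bigcup_{i}\mathcal C_{i}$ with $N_{0}$ compact and each end $\mathcal C_{i}$ isometric to $[T,\infty)\times Y_{i}$, on which Definition~\ref{Def ATID} tells us $P$ differs from a fixed TID operator $P^{0}_{i}$ by coefficients decaying in $t$ at a rate governed by $\beta$. On $N_{0}$ ellipticity gives the usual interior Sobolev and Schauder estimates, hence an interior parametrix $Q_{N_{0}}$ with smoothing remainder. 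Everything reduces to inverting each model $P^{0}_{i}$ between the graph-norm spaces of (\ref{eqnarray the theories in Thm Fredholm}), transplanting the inverse to the end, and patching; the $\beta$-ATID condition will make the patching error lower order (the two-derivative loss behind the range $k\le k_{0}-2$ coming from differentiating that error).

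Fix one end and drop the index. By Remark~\ref{Rmk eigen-basis exists} expand $u=\sum_{\Lambda}u_{\Lambda}(t)\phi_{\Lambda}(y)$ in the complete eigenbasis of $B_{P^{0}}$. Then $P^{0}$ diagonalises: on the $\Lambda$-block it is the scalar constant-coefficient ODE $\partial_{t}u_{\Lambda}-\Lambda u_{\Lambda}=g_{\Lambda}$ in the first-order case and $\partial_{t}^{2}u_{\Lambda}-a_{1}\partial_{t}u_{\Lambda}-\Lambda u_{\Lambda}=g_{\Lambda}$ in the second-order case (simpleness, Definition~\ref{Def Simple 2nd order operators}, makes the principal part scalar so the $t$-ODE decouples cleanly). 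Write the explicit Green's operator $G_{\Lambda}$ for each block and estimate it in the weighted $t$-norms with data $(-\beta,\gamma,b)$. For $|\Lambda|$ large the fixed weight $\beta$ lies in a gap between consecutive indicial roots, so after conjugating by the exponential weight the characteristic exponents stay uniformly away from $\beta$; this yields $\|G_{\Lambda}\|\lesssim(1+|\Lambda|)^{-1}$ on the $t$-line, which makes the sum over $\Lambda$ converge and, through the correspondence between powers of $\Lambda$ and order-$m_{0}$ ellipticity on $Y$, delivers the full gain of $m_{0}$ derivatives. For the finitely many $\Lambda$ with $\Lambda-\beta^{2}+a_{1}\beta=0$ — present only if $\beta$ is $P^{0}$-indicial — a homogeneous block solution is $t^{j}e^{\beta t}$ with $0\le j$ below the multiplicity; the identity $(\partial_{t}^{2}-a_{1}\partial_{t}-\Lambda)(\varphi(t)e^{\beta t})=(\varphi''+(2\beta-a_{1})\varphi')e^{\beta t}$, in which $2\beta-a_{1}\neq 0$ precisely because $\beta$ is not $P^{0}$-super indicial, shows $P^{0}$ strictly lowers the polynomial degree in $t$, so $G_{\Lambda}$ carries $W_{-\beta,\gamma,b}$ into the larger graph-norm space $\widehat W_{-\beta,\gamma,b-1}$; a weighted Hardy inequality on the half-line — valid exactly when $b\neq 1-\tfrac1p$ in the Sobolev case, $b\neq 1$ in the Schauder case (its $p=\infty$ endpoint) — keeps the borderline resonance $t^{j}e^{\beta t}$ bounded in the target and its $G_{\Lambda}$-image bounded in the domain. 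Summing over $\Lambda$ gives a bounded two-sided inverse $Q^{0}$ of $P^{0}$ between the corresponding spaces on $\R\times Y$, in both scales.

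Now patch. Take a partition of unity $\chi_{0},\chi_{i}$ subordinate to $N_{0}$ (with collar) and to the ends, choose cutoffs $\psi_{\bullet}\equiv 1$ on $\operatorname{supp}\chi_{\bullet}$, and set $Q=\psi_{0}Q_{N_{0}}\chi_{0}+\sum_{i}\psi_{i}Q^{0}_{i}\chi_{i}$ with each $Q^{0}_{i}$ transplanted to $\mathcal C_{i}$ through the weight. Then $PQ-I$ and $QP-I$ are sums of commutator terms $[P,\chi_{\bullet}]$, supported in the compact collars and hence landing in a more rapidly decaying space, together with the end errors $(P-P^{0}_{i})Q^{0}_{i}$, which carry a decaying factor by the $\beta$-ATID hypothesis; by the weighted Rellich lemma all of these are compact on the relevant weighted space. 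Hence $PQ-I$ and $QP-I$ are compact and $P$ is Fredholm, for every admissible $k,\alpha,p$, in both the Sobolev scale (provided $b\neq 1-\tfrac1p$ or $\beta$ is not $P^{0}$-indicial) and the Schauder scale (provided $b\neq 1$ or $\beta$ is not $P^{0}$-indicial), as claimed.

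The one genuinely hard part is the model analysis of the second paragraph: obtaining the scalar Green's operator bounds uniformly in $\Lambda$ while simultaneously carrying the exponential weight $-\beta$, the polynomial weight $\gamma$, and the graph-norm index $b$, and then resumming over the spectrum of $B_{P^{0}}$. This is where the three refinements interact, where the dichotomy ``$b\neq 1-\tfrac1p$ (resp.\ $1$) or $\beta$ not $P^{0}$-indicial'' is forced — it is exactly the statement that no homogeneous solution of the model ODE sits on the boundary of the target space — and where the non-super-indicial hypothesis is indispensable, being equivalent to $2\beta-a_{1}\neq 0$, i.e.\ to the fact that $P^{0}$ applied to a polynomial multiple of $e^{\beta t}$ strictly drops the degree, which is the mechanism behind the graph-norm construction and the shift $b-1\mapsto b$ in (\ref{eqnarray the theories in Thm Fredholm}). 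The remaining ingredients — compactness of the core, the weighted Rellich lemma, the patching, and the ATID error being lower order — are routine.
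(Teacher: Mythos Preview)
Your overall parametrix strategy matches the paper's, and your model analysis (the eigenbasis reduction, the Hardy inequality for the indicial block, the role of $2\beta-a_{1}\neq 0$) is essentially correct. The gap is in the patching step, specifically in your treatment of the end error $(P-P^{0})Q^{0}$.

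You write that this error ``carries a decaying factor by the $\beta$-ATID hypothesis'' and is therefore compact by weighted Rellich. But look again at Definition~\ref{Def ATID}: $\beta$-ATID means $\circledS_{\beta}(0,1)$, i.e.\ on the component $\xi^{\perp_{\beta}}$ the condition is only $|(P-P^{0})\xi^{\perp_{\beta}}|\le \delta_{0}|\xi^{\perp_{\beta}}|$ with \emph{no} power of $t$ in front. The perturbation need not decay at all on the perpendicular part---the paper even gives the explicit example $P^{0}+\delta_{0}\sin t\,\partial_{t}$. Smallness of norm is not compactness, so your argument for both $PQ-I$ and $QP-I$ breaks here.

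The paper repairs this in two asymmetric steps. First (Theorem~\ref{Thm local invertibility for ATID}), the small non-decaying perpendicular error is absorbed by a Neumann series, producing a genuine \emph{right} inverse $Q^{P}$ of $P$ itself on each end; then $S_{right}=PQ-I$ consists only of compactly supported cutoff commutators and is trivially compact. Second, since $Q^{P}$ is only a right inverse, $S_{left}=Q^{P}P-I$ is \emph{not} of this form; on the end it equals $S^{P}_{\beta,+}=Q^{P}P-Id$, which is nonzero. The paper shows (Lemma~\ref{lem regularity of S0}~IV, resting on Claim~\ref{clm boostrapping kernel of model operators}) that $S^{P}_{\beta,+}$ actually lands in the space with the strictly better exponential weight $\underline{\beta}$, because $S^{P^{0}}_{\beta,+}\xi\in\Ker P^{0}$ and kernel elements automatically improve to the next indicial root. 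It is this weight improvement $\beta\to\underline{\beta}$, not any assumed decay of $P-P^{0}$, that feeds the Rellich embedding and gives compactness of $S_{left}$. Your sketch misses both the Neumann step and the left/right asymmetry, and the latter is where the real content of Section~\ref{section Regularity and Fredholm} lies.
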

\begin{rmk} The super-indicial roots are essentially different from the ordinary ones. Fortunately, they don't exist for first-order operators, and  they barely appear on second-order operators. For example, any super-indicial root in the setting of Corollary \ref{Cor Minimal surfaces} must be positive, but we only need $\beta$ to be non-positive  therein.  
\end{rmk}
\begin{rmk} Theorem \ref{Thm general index} and Proposition \ref{Prop regularity of Harmonic sections} give   reasonably general  index formulas for first-order operators (see Remark \ref{rmk all the index can be computed}). As a by-product, we prove an obvious identity (Proposition \ref{prop eta}) on the eta-invariant defined in \cite{APS}. It can also be proved by the Fredholm theory in \cite{APS}. However, the author is not able to find Proposition \ref{prop eta} in the literature.  
\end{rmk}
\begin{rmk}\label{Rmk ker coker index do not change}  Though the indicial roots do not prevent Fredholmnness, the index still changes when $\beta$ goes across any of them (c.f.  \cite[Last 5 lines in Page 433]{Lockhart}).   
\end{rmk}
Our theory still works   when the weights are not the same on the ends (see Theorem \ref{Thm Fredholm  different weight on different ends}).

Computations indicate that the our local inverses (Theorem \ref{Thm local invertibility}) are different from those of Lockhart-McOwen \cite[(2.3)]{Lockhart} (by Fourier-transform in the $t-$direction). When $k<0$, our local inverses do not work for the   $W^{k,p}$ ($C^{k,\alpha}$) theories. 
\begin{rmk}\label{Rmk not Fredholm} Assuming that $P$ is translation invariant on each end, Theorem \ref{Thm Fredholm in the cyl setting}, \ref{Thm Fredholm  different weight on different ends} are still true with "if" replaced by "if and only if" (see the proof in the Appendix). However,   only assuming $\beta-$ATID, when $b=1$, we don't know whether
 $(\square|^{Schauder}_{-\beta,\gamma,b})$ is not Fredholm. The same doubt applies to the Sobolev theory. 
\end{rmk}

By simple conformal changes as in \cite[Section 9]{Lockhart},  Theorem \ref{Thm Fredholm in the cyl setting} is equivalent to a theory in the conic setting (and hopefully the asymptotic conic setting). Please also see \cite{Myself2016bigpaper} and 
the discussion above Lemma \ref{lem Coulomb gauge}.

   Under stronger asymptotic conditions than Definition \ref{Def ATID},  we  have a theory for super-indicial roots (on second-order operators), and a theory for  powers of Laplace-type operators i.e. $\Delta^{m},\ (m\geq 2)$. The  higher-order operators  include (linearisation of)  the extremal metric operator in \cite{Calabi}, and the conformal co-variant operators in \cite{ChangYang}. We will address these in the future when geometric motivation arises.

 Amrouch-Girault-Giroire \cite{Amrouch} also use Sobolev-spaces with polynomial weights to study Laplace equations on  domains. It's possible that our theory is essentially similar to theirs.
 
 \subsection{Simple applications}
 Geometric objects with isolated conic singularities usually converge to their tangent cones polynomially (see \cite{LSimon}). Let $r$ be the distance to the singular point,  and $t=-\log r$ be the cylindrical coordinate. Our work  implies a general phenomenon:  the rate of convergence to the tangent cone is either exponential or not faster than $\frac{1}{t}$ (or $\frac{1}{-\log r}$). 
  
  We first do minimal sub-manifolds. In the cylindrical setting, we say that a minimal graph sub-manifold is asymptotic to a cone at a certain rate, if the section "$u$" in (\ref{equ 1 in proof Cor Minimal surface}) converges to $0$  at the rate (see Definition \ref{Def convergence}). 
\begin{cor}\label{Cor Minimal surfaces}  Suppose $\Sigma$ is a $n-$dimensional closed minimal sub-manifold in $\mathbb{S}^{N},\ n\geq 1$. Let $\underline{0}$ be the negative number    in Definition \ref{Def Q+ and Q-} with respect to the $L_{\Sigma}$ in (\ref{equ 1 in proof Cor Minimal surface}). Then there is a $\delta_{0}$ depending on  $\Sigma$ with the following property. 

 Suppose $\widehat{\Sigma}$ is a (locally defined) embedded minimal sub-manifold in $\R^{N+1}$ with isolated cone singularity at $O$. Suppose $\widehat{\Sigma}$ is a graph over $Cone(\Sigma)$, and in the cylindrical setting, it converges to  $Cone(\Sigma)$ at least at the rate  $\frac{\delta_{0}}{t}$ (see Definition \ref{Def convergence}). Then $\widehat{\Sigma}$ converges to $Cone(\Sigma)$ exponentially at the rate $O(e^{-|\underline{0}|t})$. 
\end{cor}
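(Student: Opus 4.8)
The plan is to reduce the statement to an application of the Fredholm theory of Theorem~\ref{Thm Fredholm in the cyl setting} together with a standard bootstrap argument that upgrades a weak decay hypothesis to the optimal exponential rate. Write the minimal graph equation for $\widehat{\Sigma}$ over $Cone(\Sigma)$ in the cylindrical coordinate $t=-\log r$ in the form $L_\Sigma u = Q(u, \nabla u, \nabla^2 u)$, where $L_\Sigma$ is the (translation-invariant, hence TID) Jacobi-type operator of the minimal cone $Cone(\Sigma)$ acting on sections $u$ over the cylinder $\Sigma\times(T,\infty)$, and $Q$ collects the quadratic-and-higher error terms; this is equation~(\ref{equ 1 in proof Cor Minimal surface}). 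The key structural input is that $B_{L_\Sigma}$ is built from the Jacobi operator of $\Sigma\subset\mathbb S^N$, its spectrum is discrete, and the associated indicial roots $\beta$ are determined by the quadratic $\Lambda-\beta^2+a_1\beta=0$ as in Definition~\ref{Def Indicial weights}; the number $\underline 0$ is the largest negative indicial root (the first "non-positive" one), so $|\underline 0|$ is precisely the gap governing the decay rate.

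First I would fix the end and choose the weight. By hypothesis $\widehat\Sigma$ converges to $Cone(\Sigma)$ at rate $\delta_0/t$, which means $u\in\widehat C^{k+m_0,\alpha}_{-\beta,\gamma,b-1}$ for $\beta=0$ and a suitable polynomial-weight exponent $b$ (this is exactly the kind of borderline/indicial weight the refined spaces in~(\ref{eqnarray the theories in Thm Fredholm}) are designed to handle — the $\delta_0/t$ rate is the $b$-polynomial improvement over the pure weight $\beta=0$). Here $\delta_0$ is chosen small enough that the quadratic term $Q(u)$ is controlled: since $Q$ vanishes to second order, $\|Q(u)\|$ in the target Schauder space is bounded by $\|u\|\cdot(\text{small})$, so the equation $L_\Sigma u = Q(u)$ can be treated perturbatively on each end. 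Crucially $\beta=0$ is not $P^0$-super indicial (the remark after Theorem~\ref{Thm Fredholm in the cyl setting} notes super-indicial roots in this setting are positive), so Theorem~\ref{Thm Fredholm in the cyl setting} applies and $L_\Sigma$ is Fredholm in the relevant graph-norm spaces.

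Next comes the iteration that produces the exponential rate. Starting from $u$ decaying like $\delta_0/t$, I would run a bootstrap: solving $L_\Sigma v = Q(u)$ using the local/parametrix inverse (Theorem~\ref{Thm local invertibility}) and the Fredholm package, one gains decay because $Q(u)$ decays faster than $u$ (quadratically in a small quantity), and there is no indicial root of $L_\Sigma$ strictly between $0$ and $|\underline 0|$ to obstruct the improvement — this is the standard "no indicial root in the window $\Rightarrow$ decay jumps to the next root" mechanism, made rigorous here by the new theory precisely because the starting weight $0$ is itself indicial. Iterating, the decay of $u$ improves past every weight in $(0,|\underline 0|)$; since the next available homogeneity is the eigen-mode corresponding to $\underline 0$, one lands at $u = O(e^{-|\underline 0| t})$ (the exponential coming from the fact that the relevant decay exponent is now a genuine non-indicial weight, so the weighted Schauder estimate closes with no polynomial loss). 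A separation-of-variables / maximum-principle argument on the leading Fourier mode in $t$ pins down that the rate is exactly $|\underline 0|$ and not faster unless that mode vanishes, but for the stated conclusion the upper bound $O(e^{-|\underline 0|t})$ is what is required.

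The main obstacle is the very first improvement step, crossing the indicial weight $\beta=0$: a priori $u$ only lies in the borderline space and $L_\Sigma$ at weight $0$ is indicial, so the classical Lockhart--McOwen theory gives nothing — this is exactly where the polynomial-weight refinement and the graph-norm spaces of Theorem~\ref{Thm Fredholm in the cyl setting} must be invoked to get a bounded (Fredholm) inverse at the indicial weight, and then to show the image $Q(u)$ actually has better-than-borderline decay so that the parametrix returns a $u$ that has broken free of the weight $0$. Handling the nonlinearity $Q$ in these somewhat exotic norms — verifying it maps the refined space into the refined target with the quadratic gain — is the technical heart; the geometric requirement $n\ge 1$ and the choice of $\delta_0(\Sigma)$ enter here to guarantee the contraction/absorption estimate. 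Once past this step, the remaining bootstrap to $O(e^{-|\underline 0|t})$ is routine weighted elliptic iteration, using that the spectrum of $B_{L_\Sigma}$ is discrete so that only finitely many indicial roots lie below $|\underline 0|$ and each is cleared in finitely many steps.
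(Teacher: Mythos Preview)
Your proposal identifies the right obstacle (crossing the indicial weight $\beta=0$) but the mechanism you propose for that step does not work, and the paper takes a different route.

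The gap is in your bootstrap. From $u\sim\delta_0/t$ you get $Q(u)\sim(\delta_0/t)^2\in C^{\alpha}_{0,2}$, and you then assert that solving $L v=Q(u)$ ``gains decay''. But at the indicial weight $\beta=0$ the local inverse $Q^{L}_{0,+}$ sends $C^{\alpha}_{0,2}$ only into $\widehat C^{k+m_0,\alpha}_{0,1}$ (see (\ref{equ 1 in Thm local invertibility})): on the $\parallel_0$ component the particular solution is built from $\int_t^\infty f\,ds\sim t^{-1}$, so the polynomial exponent does not improve --- only the constant shrinks from $\delta_0$ to $C\delta_0^2$. A polynomial gain $t^{-2}$ versus $t^{-1}$ does not push you across the indicial root; the ``no indicial root in the window'' principle you quote requires the source to already lie at an exponential weight strictly below $0$. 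Iterating your scheme makes the constant tend to zero geometrically in $\delta_0$ but never changes the $t$-rate; to close one must sum the resulting Neumann series, which is not what you describe and is in fact precisely the paper's device. Relatedly, the tool you invoke (Theorem~\ref{Thm Fredholm in the cyl setting}) gives Fredholmness, which controls kernel/cokernel dimensions, not decay improvement; it is not what is used here.

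The paper's argument is different and avoids any iteration. Following the Yang--Mills template, one rewrites the nonlinearity $\mathfrak R+(M_\Sigma-L_\Sigma)$ as a \emph{linear} operator $Q_{MSM}$ with $u$-dependent coefficients, so the equation becomes $P_{MSM}u=0$ with $P_{MSM}=L-Q_{MSM}$. The hypothesis $u\sim\delta_0/t$ is exactly the statement that $P_{MSM}$ is $0$-ATID (Definition~\ref{Def ATID}). Now $u\in\ker P_{MSM}$, so $u=-S^{P_{MSM}}_{0,+}u$, and Lemma~\ref{lem regularity of S0}\,\textbf{III} (equivalently Proposition~\ref{Prop regularity of Harmonic sections}, first case) gives $u\in\widehat C^{k,\alpha}_{-\underline 0,-\epsilon}$ in one step. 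Once $u$ decays exponentially the coefficients of $P_{MSM}-L$ do too, so $P_{MSM}$ satisfies $\circledS(l)$ for all $l>1$, and Proposition~\ref{Prop regularity of Harmonic sections} (second case) upgrades this to the sharp $O(e^{-|\underline 0|t})$. The point is that absorbing $Q$ into the operator turns the problem into one about kernel elements of an ATID operator, for which the decay jump to $\underline 0$ is already packaged in Lemma~\ref{lem regularity of S0} via the Neumann series; treating $Q(u)$ as an external source, as you do, loses exactly this structure.
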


\begin{rmk}By  Definition \ref{Def ATID} and Remark \ref{Rmk Dependence of delta 0},  we can not make $\delta_{0}$ small by scaling. Adam-Simon \cite{AdamSimon} showed that there are singular minimal sub-manifolds converging to a cone  at a rate comparable to  $(-\log r)^{-1}$. This suggests  that in general, the assumption on the rate in Corollary \ref{Cor Minimal surfaces} can not be weaken.   
\end{rmk}

Similar results hold for Yang-Mills connections as well.
\begin{cor}\label{Cor regularity Yang Mills} Let $n\geq 5$.  Suppose $g$ is a smooth metric  on  $B^{n}_{O}(R)$  and $g(O)=g_{E}$ (the Euclidean metric). Suppose $A_{O}$ is a $U(m)$ or $SO(m)$  Yang-Mills connection on (the unit round) $\mathbb{S}^{n-1}$. Let $\underline{0}$ be the negative number    in Definition \ref{Def Q+ and Q-} with respect to the $B$ in (\ref{equ 2 proof Yang Mills Cor}). Then there is a $\delta_{0}>0$ depending on  $A_{O}$
with the following properties. 

 Suppose  $A$ is a smooth Yang-Mills connection on $B_{O}(R)\setminus O$. In the cylindrical setting as Section \ref{section Yang Mills application}, suppose $A$ converges to  $Cone(A_{O})$ at least at the rate $\frac{\delta_{0}}{t}$ (see Definition \ref{Def convergence}).

$I:$ Suppose  $A$  is in Coulomb gauge relative to $A_{O}$ (with respect to $g$ or the Euclidean metric). Then $A$  converges to $Cone(A_{O})$ exponentially at the following rate.
\begin{equation}\label{equ ROC in cor improving rate G2 monopoles}\left\{\begin{array}{cc}
O(e^{-|\underline{0}|t}) &\ \textrm{when}\ |\underline{0}|<1,\\
O(e^{-t})&\ \textrm{when}\ |\underline{0}|>1,\\
O(te^{-|\underline{0}|t})\  &\ \textrm{when}\ |\underline{0}|=1. 
\end{array}\right. \end{equation}
$II:$ When $A_{O}$ is irreducible,   there exists a   gauge  $s$  such that $s(A)$  converges to $Cone(A_{O})$ exponentially  as  (\ref{equ ROC in cor improving rate G2 monopoles}).  \end{cor}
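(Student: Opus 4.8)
The plan is to pass to the cylindrical coordinate $t=-\log r$, so that a punctured neighbourhood of $O$ becomes a half-cylinder $[t_{0},\infty)\times\mathbb{S}^{n-1}$, and to analyse the difference $a:=A-Cone(A_{O})$ there. As in Section \ref{section Yang Mills application}, in Part $I$ the Yang-Mills equation for $A$ and the relative Coulomb-gauge condition combine into one second-order elliptic system whose linearisation at $Cone(A_{O})$ is the TID operator $P^{0}$ built from the cross-sectional operator $B$ of (\ref{equ 2 proof Yang Mills Cor}); its relevant negative indicial weight is the $\underline{0}$ of Definition \ref{Def Q+ and Q-}. Schematically the system is $P^{0}a=Q(a)+h$, where $Q$ collects the terms at least quadratic in $(a,\partial a)$, so $Q(a)$ decays at twice whatever rate $a$ does, and $h$ is the inhomogeneity created by $g-g_{E}$; since $g$ agrees with $g_{E}$ only at $O$, the reduction in Section \ref{section Yang Mills application} gives $h=O(e^{-t})$ in the cylindrical norms. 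Here $a_{1}=n-2>0$, so any $P^{0}$-super-indicial weight equals $\tfrac{a_{1}}{2}>0$; hence no $\beta\le 0$ is super-indicial, and since only such $\beta$ will occur, Theorem \ref{Thm Fredholm in the cyl setting} applies throughout. As the statement is local, one either caps off the end or invokes the local invertibility of Theorem \ref{Thm local invertibility} directly on $[t_{0},\infty)\times\mathbb{S}^{n-1}$.

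First I would turn the hypothesis ``$A$ converges to $Cone(A_{O})$ at least at the rate $\delta_{0}/t$'' (Definition \ref{Def convergence}) into the statement that $a$ lies, with small norm, in the polynomial-weighted Schauder space $\widehat{C}^{k+m_{0},\alpha}_{0,\gamma,b-1}$ at exponential weight $\beta=0$, with $\gamma,b$ chosen so that $b>1$ and the factor $t^{-1}$ is absorbed; interior Schauder estimates for the gauge-fixed equation promote pointwise smallness of $a$ to smallness of all derivatives appearing in these norms. The reason $\delta_{0}$ must be small in terms of $A_{O}$ is this: when $0\in\mathrm{Spec}\,B$, so that $\beta=0$ is $P^{0}$-indicial, the equation admits a resonant branch with leading term $c\,t^{-1}$, $c\in\Ker B$, and substituting the expansion forces $c$ to solve an algebraic relation whose solutions are $0$ and a definite nonzero value fixed by $A_{O}$ and $n$; smallness of $\delta_{0}$ rules out the latter, so $c=0$, and the same reasoning removes every resonant $O(t^{-k})$ term, leaving $a$ decaying at weight $0$ faster than any negative power of $t$. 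I expect this step to be the main obstacle: it is precisely here that the refined spaces $\widehat{C}_{\bullet,\gamma,\bullet}$ and the ``$b\neq 1$'' part of Theorem \ref{Thm Fredholm in the cyl setting} are indispensable, and the Adam-Simon examples \cite{AdamSimon} show the nonzero alternative for $c$ is genuinely attained without the smallness, which is why $\delta_{0}$ cannot be dropped.

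Next I would bootstrap the decay downwards in the exponential weight. Feeding $Q(a)+h$ back through the parametrix of Theorem \ref{Thm Fredholm in the cyl setting} pushes the weight of $a$ past every value in $(\underline{0},0)$, none of which is $P^{0}$-indicial by the choice of $\underline{0}$; the iteration can stall only at $\underline{0}$. Since $Q(a)$ always decays at twice the current rate, its contribution stays $o(e^{-|\underline{0}|t})$ and never affects the leading order, so only the homogeneous rate $e^{-|\underline{0}|t}$ and the forcing $h=O(e^{-t})$ compete; this is the trichotomy (\ref{equ ROC in cor improving rate G2 monopoles}). For $|\underline{0}|<1$ the weight $-1$ lies below $\underline{0}$, so $h$ is subleading and $a=O(e^{-|\underline{0}|t})$; for $|\underline{0}|>1$ the weight $-1$ lies in the indicial-free range, $h$ is not removable there, and the rate is capped at $O(e^{-t})$; for $|\underline{0}|=1$ the forcing sits exactly at the indicial weight and contributes the extra factor $t$, giving $O(te^{-|\underline{0}|t})$. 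In each case the leading term is isolated by solving the model equation in $t$, using Proposition \ref{Prop regularity of Harmonic sections} to describe the homogeneous asymptotics, and the remainder is absorbed into a strictly faster weighted space. This settles Part $I$.

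For Part $II$ the gauge hypothesis is dispensed with. Irreducibility of $A_{O}$ gives $H^{0}_{A_{O}}=0$, hence $d^{*}_{A_{O}}d_{A_{O}}$ is invertible on the relevant sections over $\mathbb{S}^{n-1}$, so the equation $d^{*}_{A_{O}}\bigl(s(A)-Cone(A_{O})\bigr)=0$ for a gauge transformation $s$ is solvable near $Cone(A_{O})$ by the implicit function theorem in the weighted spaces — the needed bounded inverse on the end again furnished by Theorem \ref{Thm Fredholm in the cyl setting} (still $\beta\le 0$, so no super-indicial obstruction), and the required smallness supplied by the rate-$\delta_{0}/t$ hypothesis. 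Applying Part $I$ to $s(A)$, which is now in relative Coulomb gauge, yields exponential convergence of $s(A)$ to $Cone(A_{O})$ at the rate (\ref{equ ROC in cor improving rate G2 monopoles}), which is the assertion.
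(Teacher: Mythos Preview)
Your overall framework—pass to the cylinder, identify the TID model with link operator $B$, separate the inhomogeneity $h=O(e^{-t})$ coming from $g-g_{E}$, and read off the trichotomy (\ref{equ ROC in cor improving rate G2 monopoles}) by comparing $|\underline{0}|$ with $1$—matches the paper, as does Part $II$ via an implicit-function-theorem Coulomb gauge (Lemma \ref{lem Coulomb gauge}). One small slip: from (\ref{equ 1 proof Yang Mills Cor}) the drift coefficient is $a_{1}=n-4$, not $n-2$; your conclusion on super-indicial weights is unaffected for $n\geq 5$.

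The substantive divergence, and the gap, is your second paragraph. You keep the quadratic term $Q(a)$ on the right-hand side and try to pass from $O(\delta_{0}/t)$ to faster-than-polynomial decay by a ``resonant branch'' argument: an expansion with leading term $c\,t^{-1}$, $c\in\Ker B$, an unspecified ``algebraic relation'' for $c$ with a discrete nonzero solution, and smallness to select $c=0$. None of this is substantiated here, and making it rigorous would amount to redoing the Adam--Simon \cite{AdamSimon} analysis rather than applying the linear theory of the paper. The paper bypasses this entirely by a different device: it \emph{absorbs the nonlinearity into the operator}. Writing $Q_{YM}(a)=\widehat{Q}_{YM,a}(a)$ with $\widehat{Q}_{YM,a}$ linear in its argument and with coefficients built from $a$ [see (\ref{equ 3 proof Yang Mills Cor})], the full equation becomes $P_{YM}a=-e^{-2t}d^{\star}_{A_{O}}F_{A_{O}}$ as in (\ref{equ 4 proof Yang Mills Cor}), a \emph{linear} equation. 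The hypothesis $|a|_{C^{m_{0},\alpha}(S_{t})}\leq \delta_{0}/t$ is then exactly the statement that $P_{YM}$ satisfies $\circledS_{0}(0,1)|_{Cyl_{t_{0}}}$ with constant $\delta_{0}$—this is the true reason $\delta_{0}$ must be small in terms of $A_{O}$ (Definition \ref{Def ATID}, Remark \ref{Rmk Dependence of delta 0}), not a dichotomy for $c$.

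With $P_{YM}$ thus $0$-ATID and the right-hand side in $C^{\alpha}_{1,0}$, one applies the local right inverse $Q^{P_{YM},t_{0}}_{0,+}=Q^{P_{YM},t_{0}}_{\underline{0},-}$ (Theorem \ref{Thm local invertibility for ATID} (\textbf{iii})) and invokes Lemma \ref{lem regularity of S0} \textbf{III} once to obtain exponential decay of $a$; no iterative bootstrap through intermediate weights in $(\underline{0},0)$ is needed. Exponential decay of $a$ upgrades $P_{YM}$ to $\circledS(l)$ for all $l>1$, after which Proposition \ref{Prop regularity of Harmonic sections} applied to the kernel element $a+Q^{P_{YM},t_{0}}_{0,+}e^{-2t}d^{\star}_{A_{O}}F_{A_{O}}$, together with Theorem \ref{Thm local invertibility for ATID} for the particular solution, gives the precise rates (\ref{equ ROC in cor improving rate G2 monopoles}).
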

\begin{rmk}By (\ref{equ 1 proof Yang Mills Cor}), when $n=4$, the weight $0$ is super-indicial  unless $B$ is positively definite. 
\end{rmk}
\textbf{Organization of this note:} the norms  can be found in Section \ref{section preparation}. We construct the local inverses in Section \ref{section Local inverse between weighted Sobolev graph-space}. In Section \ref{section Regularity and Fredholm}, we study regularity of harmonic sections, and complete the proof for Theorem \ref{Thm Fredholm in the cyl setting}. We give the index formula (for first-order operators) and study the eta-invariant in Section \ref{section index}. We prove Corollary \ref{Cor Minimal surfaces},  \ref{Cor regularity Yang Mills} in Section \ref{section Applications}.\\

 \textbf{Acknowledgement:} The author would like to thank Professor Simon Donaldson, Thomas Walpuski, and Lorenzo Foscolo for  helpful discussions.

\section{Preparation \label{section preparation}}\begin{Def}\label{Def Simple 2nd order operators}In the setting of Definition \ref{Def TID operators}, we say that an operator $H:\ C^{\infty}(Y,E)\rightarrow C^{\infty}(Y,E)$ is  admissible, if there is a linear first-order  differential operator  $H_{0}$, and sections $\widehat{\Gamma}_{k},\ \Gamma_{k}\in C^{\infty}(Y,E)$  such that 
\begin{equation}\label{equ Def admissible 1st order operators} H\xi=H_{0}\xi+\Sigma_{k=1}^{k_{0}}\widehat{\Gamma}_{k}\int_{Y}<\xi,\Gamma_{k}>dV.
\end{equation}
We say that a second-order   operator $B:\ C^{\infty}(Y,E)\rightarrow C^{\infty}(Y,E)$ is  simple, if there is a smooth connection $A_{0}$ on $E$ and a smooth metric $\widehat{g}$ on $Y$ (which does not need to be the given one),  such that $B-\nabla^{\star_{\widehat{g}}}_{A_{0}}\nabla_{A_{0}}$ is admissible. 
\end{Def}

\begin{Def}\label{Def Strips}(Strips) Let $kS_{m}=(m-k,m+k)$, and $\{\xi_{m}\}^{\infty}_{m=3}$ be a partition of unity of $Cyl_{1}$ subordinate to the  cover $\{2S_{m}\}^{\infty}_{m=3}$ i.e. $\xi_{m}$ is supported in $2S_{m}$ and is $\equiv 1$ in $S_{m}$. 
 \end{Def}
 \begin{Def}\label{Def graph norm}Let $Cyl_{t_{0}}$ denote $Y\times (t_{0},\infty),\ t_{0}\geq 0.1$,  we define the  $L^{2}_{-\beta,b}[Cyl_{t_{0}}]-$ space of sections to the underlying bundle by the norm  \begin{equation}\label{equ weighted L2 norm in Def Graph norm}|\xi|_{L^{p}_{-\beta,b}(Cyl_{t_{0}})}\triangleq |e^{-\beta t}t^{b} \xi|_{L^{p}(Cyl_{t_{0}})}=(\int_{Cyl_{t_{0}}}|e^{-\beta t}t^{b} \xi|^{p})^{\frac{1}{p}}.
 \end{equation} 
We define   $|\xi|_{W^{k,p}_{-\beta,b}(Cyl_{t_{0}})}\triangleq \Sigma_{j=0}^{k}|\nabla^{j}_{A_{0}}\xi|_{L^{p}_{-\beta,b}(Cyl_{t_{0}})}.$ For the Schauder theory, we define $|\xi|_{C^{k,\alpha}_{-\beta,b}(\overline{Cyl}_{t_{0}})}\triangleq \sup_{m\geq t_{0}+1}m^{b}e^{-\beta m}|\xi|_{C^{k,\alpha}(S_{m})}$. 

 Let  $\xi^{\parallel_\beta}$ denote the projection of $\xi$ onto $Ker\{B_{P^{0}}-\beta Id\}$ (for all $t$), and $\xi^{\perp_\beta}=\xi-\xi^{\parallel_\beta}$ be the perpendicular vector. When $P^{0}$ is first-order, we define 
\begin{eqnarray*}|\sigma_{2}^{-1}\xi|_{\widehat{W}^{k,p,P^{0}}_{-\beta,\gamma,b-1}(Cyl_{t_{0}})}& \triangleq &  |\xi^{\perp_{\beta}} |_{W^{k,p}_{-\beta,\gamma}(Cyl_{t_{0}})}+|\xi^{\parallel_{\beta}}|_{W^{k,p}_{-\beta,b-1}(Cyl_{t_{0}})}+|\frac{\partial \xi^{\parallel_{\beta}}}{\partial t}-\beta\xi^{\parallel_{\beta}}|_{W^{k-1,p}_{-\beta,b}(Cyl_{t_{0}})}
\\ |\sigma_{2}^{-1}\xi|_{W^{k,p,P^{0}}_{-\beta,\gamma,b}(Cyl_{t_{0}})}&\triangleq& |\xi^{\perp_{\beta}} |_{W^{k,p}_{-\beta,\gamma}(Cyl_{t_{0}})}+|\xi^{\parallel_{\beta}}|_{W^{k,p}_{-\beta,b}(Cyl_{t_{0}})}\label{equ Def elliptic graph norm first order}
\\\label{equ Def elliptic graph Schauder norm first order}|\sigma_{2}^{-1}\xi|_{\widehat{C}^{k,\alpha,P^{0}}_{-\beta,\gamma,b-1}(\overline{Cyl}_{t_{0}})}&\triangleq & |\xi^{\perp_{\beta}} |_{C^{k,\alpha}_{-\beta,\gamma}(\overline{Cyl}_{t_{0}})}+|\xi^{\parallel_{\beta}}|_{C^{k,\alpha}_{-\beta,b-1}(\overline{Cyl}_{t_{0}})}+|\frac{\partial \xi^{\parallel_{\beta}}}{\partial t}-\beta\xi^{\parallel_{\beta}}|_{C^{k-1,\alpha}_{-\beta,b}(\overline{Cyl}_{t_{0}})}.\\
|\sigma_{2}^{-1}\xi|_{C^{k,\alpha,P^{0}}_{-\beta,\gamma,b}(\overline{Cyl}_{t_{0}})}&\triangleq& |\xi^{\perp_{\beta}} |_{C^{k,\alpha}_{-\beta,\gamma}(\overline{Cyl}_{t_{0}})}+|\xi^{\parallel_{\beta}}|_{C^{k,\alpha}_{-\beta,b}(\overline{Cyl}_{t_{0}})}.\end{eqnarray*}
When $P^{0}$ is second-order elliptic, let $\Lambda_{\beta}=\beta^{2}-a_{1}\beta$, we define 
\begin{eqnarray*}\label{equ Def elliptic graph norm second order}\nonumber |\sigma_{2}^{-1}\xi|_{\widehat{W}^{k,p,P^{0}}_{-\beta,\gamma,b-1}(Cyl_{t_{0}})}&\triangleq& |\xi^{\perp_{\Lambda_\beta}} |_{W^{k,p}_{-\beta,\gamma}(Cyl_{t_{0}})}+|\xi^{\parallel_{\Lambda_\beta}}|_{W^{k,p}_{-\beta,b-1}(Cyl_{t_{0}})}+|\frac{\partial \xi^{\parallel_{\Lambda_\beta}}}{\partial t}-\beta\xi^{\parallel_{\Lambda_\beta}}|_{W^{k-1,p}_{-\beta,b}(Cyl_{t_{0}})}\\& & +|\frac{\partial^{2} \xi^{\parallel_{\Lambda_\beta}}}{\partial t^{2}}-\beta^{2}\xi^{^{\parallel_{\Lambda_\beta}}}|_{W^{k-2,p}_{-\beta,b}(Cyl_{t_{0}})}\\
|\sigma_{2}^{-1}\xi|_{W^{k,p,P^{0}}_{-\beta,\gamma,b}(Cyl_{t_{0}})}&\triangleq & |\xi^{\perp_{\Lambda_\beta}} |_{W^{k,p}_{-\beta,\gamma}(Cyl_{t_{0}})}+|\xi^{\parallel_{\Lambda_\beta}}|_{W^{k,p}_{-\beta,b}(Cyl_{t_{0}})}\label{equ Def elliptic graph Schauder norm second order}\\ |\sigma_{2}^{-1}\xi|_{\widehat{C}^{k,\alpha,P^{0}}_{-\beta,\gamma,b-1}(\overline{Cyl}_{t_{0}})}&\triangleq & |\xi^{\perp_{\Lambda_\beta}} |_{C^{k,\alpha}_{-\beta,\gamma}(\overline{Cyl}_{t_{0}})}+|\xi^{\parallel_{\Lambda_\beta}}|_{C^{k,\alpha}_{-\beta,b-1}(\overline{Cyl}_{t_{0}})}+|\frac{\partial \xi^{\parallel_{\Lambda_\beta}}}{\partial t}-\beta\xi^{\parallel_{\Lambda_\beta}}|_{C^{k-1,\alpha}_{-\beta,b}(\overline{Cyl}_{t_{0}})}\nonumber\\& &+|\frac{\partial^{2} \xi^{\parallel_{\Lambda_\beta}}}{\partial t^{2}}-\beta^{2}\xi^{^{\parallel_{\Lambda_\beta}}}|_{C^{k-2,\alpha}_{-\beta,b}(\overline{Cyl}_{t_{0}})}.
\\|\sigma_{2}^{-1}\xi|_{C^{k,\alpha,P^{0}}_{-\beta,\gamma,b}(\overline{Cyl}_{t_{0}})}&\triangleq &|\xi^{\perp_{\Lambda_\beta}} |_{C^{k,\alpha}_{-\beta,\gamma}(\overline{Cyl}_{t_{0}})}+|\xi^{\parallel_{\Lambda_\beta}}|_{C^{k,\alpha}_{-\beta,b}(\overline{Cyl}_{t_{0}})}. \end{eqnarray*}
\end{Def}
\begin{rmk} The $\sigma_{2}$ of the adjoint operator $L^{\star}$ in (\ref{equ relation ker and coker in L2 to weighted L2 proof of index thm}) is usually not identity, but it never affects the index or kernel.   
\end{rmk}
For all first and second-order  TID-operators, we abuse notation and denote the corresponding  operators on the link as $B_{P^{0}}$.  We need to solve the equations 
  \begin{equation}\label{equ tensor we need to solve 1st order operator} (\frac{\partial}{\partial t}-B_{P_{0}})\sigma_{2}\xi=\sigma_{1}^{-1}h,\ (\frac{\partial^{2}}{\partial t^{2}}-a_{1}\frac{\partial}{\partial t}-B_{P_{0}})\sigma_{2}\xi=\sigma_{1}^{-1}h\ \textrm{respectively}.
  \end{equation}
  Let $u=(\sigma_{2}\xi) e^{-\beta t}$, $f=(\sigma_{1}^{-1}h) e^{-\beta t}$, (\ref{equ tensor we need to solve 1st order operator})  become
  \begin{equation}\label{equ tensor we need to solve 1st order operator multiplied by ebetat} \frac{\partial u}{\partial t}-B_{P^{0}_{\beta}}u=f\ \textrm{and}\  \frac{\partial^{2} u}{\partial t^{2}} -(a_{1}-2\beta)\frac{\partial u}{\partial t}-B_{P^{0}_{\beta}} u=f\ \textrm{respectively},
   \end{equation}
where 
\begin{equation}B_{P^{0}_{\beta}}\triangleq\left\{ \begin{array}{cc}B_{P^{0}}-\beta Id\ & \textrm{when}\ P^{0}\ \textrm{is first-order} \\
B_{P^{0}}+a_{1}\beta-\beta^{2}\ & \textrm{when}\ P^{0}\ \textrm{is second-order  elliptic}.
\end{array}\right.
\end{equation}
   
For any $\Lambda\in Spec(B_{P^{0}})$ (repeated by multiplicity). Let \begin{equation}\label{equ spec P0beta}\lambda\ [\in Spec(B_{P_{\beta}^{0}})]\triangleq\left\{\begin{array}{cc}\Lambda-\beta & \textrm{when}\ P^{0}\ \textrm{is first-order},\\
\Lambda+a_{1}\beta-\beta^{2} & \textrm{when}\ P^{0}\ \textrm{is second-order  elliptic}.
\end{array}\right.\end{equation}
Let $[\phi_{\Lambda},\Lambda\in Spec(B_{P^{0}})]$ denote the  orthonormal eigen-basis of $L^{2}[Y,E]$ with respect to $B_{P^{0}}$. Abusing notation, we let $\phi_{\lambda}=\phi_{\Lambda}$. In terms of the Fourier series $u\  (f)=\Sigma_{\Lambda} u_{\lambda}\phi_{\Lambda}\  (\Sigma_{\Lambda} f_{\lambda}\phi_{\Lambda})$,  (\ref{equ tensor we need to solve 1st order operator multiplied by ebetat}) is equivalent to the ODE's 
 \begin{equation}\label{equ the simple ODE  we need to solve 1st order operator}
   \frac{d u_{\lambda}}{dt}-\lambda u_{\lambda}=f_{\lambda},\ \frac{d^{2} u_{\lambda}}{dt^{2}}-(a_{1}-2\beta)\frac{du_{\lambda}}{dt}-\lambda u_{\lambda}=f_{\lambda}\ \textrm{for all}\ \lambda\in SpecB_{P_{\beta}^{0}}\ \textrm{respectively}.  
   \end{equation}
   \begin{rmk}\label{Rmk Fourier interpretation of the norms}Let $\sigma_{2}=Id$. In terms of  the Fourier-coefficients, when $P^{0}$ is  first-order, 
\begin{eqnarray*}
|\xi|^{2}_{\widehat{W}^{1,2,P_{\beta}^{0}}_{0,\gamma,b-1}(Cyl_{t_{0}})}&=&  \Sigma_{\lambda\in Spec(B_{P^{0}_{\beta}}),\lambda\neq 0}[ (1+\lambda^{2})\int^{\infty}_{t_{0}} \xi^{2}_{\lambda}s^{2\gamma}ds+\int^{\infty}_{t_{0}} |\frac{d \xi_{\lambda}}{d s}|^{2}s^{2\gamma}ds]\\& & 
+\Sigma_{\lambda\in Spec(B_{P^{0}_{\beta}}), \lambda =0}[  \int^{\infty}_{t_{0}} \xi^{2}_{\lambda}s^{2b-2}ds+\int^{\infty}_{t_{0}} |\frac{d\xi_{\lambda}}{ds}|^{2}s^{2b}ds].
 \end{eqnarray*}
When $P^{0}$ is  second-order elliptic, using the usual $W^{2,2}-$elliptic estimate on strips, we routinely verify the following for any $\xi$ compactly supported in $Cyl_{t_{0}+\epsilon}$. 
\begin{eqnarray*}
|\xi|^{2}_{\widehat{W}^{2,2,P_{\beta}^{0}}_{0,\gamma,b-1}(Cyl_{t_{0}})}&\leq & C(\epsilon)\{\Sigma_{\lambda\in Spec(B_{P^{0}_{\beta}}),\lambda\neq 0} \int^{\infty}_{t_{0}} [(1+\lambda^{2})\xi^{2}_{\lambda}+(1+|\lambda|)|\frac{d\xi_{\lambda}}{dt}|^{2}+|\frac{d^{2}\xi_{\lambda}}{dt^{2}}|^{2}] t^{2\gamma}dt  \\& & 
+ \Sigma_{\lambda\in Spec(B_{P^{0}_{\beta}}), \lambda =0}  [\int^{\infty}_{t_{0}} \xi^{2}_{\lambda}t^{2b-2}ds +\int^{\infty}_{t_{0}}( |\frac{d\xi_{\lambda}}{dt}|^{2}+ |\frac{d^{2}\xi_{\lambda}}{dt^{2}}|^{2})t^{2b}dt]\}
 \end{eqnarray*}
\end{rmk}
\begin{rmk}  Multiplying by $e^{-\beta t}$ is a linear isomorphism:
  \begin{equation*}\label{equ ebetat multiple is an isomorphism 1}
\widehat{C}^{k,\alpha,P^{0}}_{-\beta,\gamma,b-1}
\longrightarrow  \widehat{C}^{k,\alpha,P^{0}_{\beta}}_{0,\gamma,b-1},\ \widehat{W}^{k,p,P^{0}}_{-\beta,\gamma,b-1}
\longrightarrow \widehat{W}^{k,p,P^{0}_{\beta}}_{0,\gamma,b-1},\ C^{k,\alpha}_{-\beta,\gamma,b} \longrightarrow C^{k,\alpha}_{0,\gamma,b},\ L^{2}_{-\beta,\gamma,b}\longrightarrow L^{2}_{0,\gamma,b}. \end{equation*}
 \end{rmk}

\begin{Def}\label{Def Global graph norm and the beta vector}
Suppose $\overrightarrow{\beta}=(\beta_{1},.....,\beta_{l_{0}})$, $\overrightarrow{\gamma}=(\gamma_{1},......,\gamma_{l_{0}})$,  $\overrightarrow{b}=(b_{1},......,b_{l_{0}})$  are vectors of $l_{0}-$entries. Given an $ATID$ operator $P$ over a manifold $N$ with $l_{0}$ cylindrical ends, we denote the ends by $U_{j},\ j=1....l_{0}$. We add the interior $U_{0}$ to obtain an open cover of $N$.  Using a partition of unity $\chi_{j},\ j=0....l_{0}$ subordinate to the cover, we define 
\begin{equation}\label{equ partition unity Def for norms}
|\xi|_{\widehat{W}^{k,p,P}_{-\beta,\gamma,b-1}(N)}=|\chi_{0}\xi|_{W^{k,2}(U_{0})}+\Sigma_{j=1}^{l_{0}}|\chi_{j}\xi|_{\widehat{W}^{k,p,P^{0,j}}_{-\beta_{j},\gamma_{j},b_{j}-1}(U_{j})},
\end{equation}
where $P^{0,j}$ is the limit $TID$ operator of $P$ on the $j-$th end. The same definition as (\ref{equ partition unity Def for norms}) applies to all the other norms in Definition \ref{Def graph norm} (including $\widehat{C}^{k,\alpha,P}_{-\beta,\gamma,b-1}(N)$, $C^{k,\alpha,P}_{-\beta,\gamma,b}(N)$ etc).

 When the domain is the whole  manifold, we usually hide the $N$ in the norm symbols.

\textbf{Important Convention:} When  $\beta_{1}=...=\beta_{l_{0}}=\beta$, we denote $\overrightarrow{\beta}$ (a vector) as $\beta$ (number). The same applies to  $\overrightarrow{b}$ and 
$\overrightarrow{\gamma}$. This makes the notations consistent.  \end{Def}

\begin{Def}\label{Def ATID} Let $\delta_{0}>0$ be small enough with respect to the data in Theorem \ref{Thm local invertibility} except $t_{0}$, such that the Neumann-Series in Lemma \ref{lem regularity of S0} and Theorem \ref{Thm local invertibility for ATID} converge as desired.

   Let $k_{0}\geq 10$,   $|\cdot|_{C^{k}}(t,y)\triangleq \Sigma_{0\leq i+j\leq k}|\frac{\partial}{\partial t^{i}}\nabla^{j}\cdot|(t,y)$.  We say  that $P$ satisfies the \\ $\circledS_{\beta}(l_{1},l_{2})|_{Cyl_{t_{0}}}-$condition, if the following holds for any $k\leq k_{0}$, $t\geq t_{0}+1$, $\alpha\in [0,1)$, $\xi$, and a $\delta_{0}$ small enough with respect to the data in Theorem \ref{Thm local invertibility}.
   \begin{eqnarray*}\label{equ Def Sbeta condition}
 t^{l_{1}}|(P-P^{0})\xi^{\perp_{\beta}}|_{C^{k,\alpha}(\overline{S}_{t})}\leq \delta_{0}|\xi^{\perp_{\beta}}|_{C^{k+m_{0},\alpha}(\overline{S}_{t})},\
t^{l_{1}}|(P-P^{0})\xi^{\perp_{\beta}}|_{C^{k}}(t,y)\leq \delta_{0}|\xi^{\perp_{\beta}}|_{C^{k+m_{0}}}(t,y)\nonumber\\
 t^{l_{2}}|(P-P^{0})\xi^{\parallel_{\beta}}|_{C^{k,\alpha}(\overline{S}_{t})}\leq \delta_{0}|\xi^{\parallel_{\beta}}|_{C^{k+m_{0},\alpha}(\overline{S}_{t})},\
t^{l_{2}}|(P-P^{0})\xi^{\parallel_{\beta}}|_{C^{k}}(t,y)\leq \delta_{0}|\xi^{\parallel_{\beta}}|_{C^{k+m_{0}}}(t,y)
\end{eqnarray*}

 We say that $P$ satisfies  $\circledS(l)|_{Cyl_{t_{0}}}$ if it satisfies $\circledS_{\beta}(l_{1},l_{2})|_{Cyl_{t_{0}}}$ for all $\beta$ and $l_{1}=l_{2}=l$. We say that $P$  is   $\overrightarrow{\beta}-ATID$ on $N$ if for any $i$, it satisfies $\circledS_{\beta_{i}}(0,1)|_{Cyl_{t_{0}}}$ for some $t_{0}$ on the $i-$th end. 
 \end{Def}
\begin{rmk}\label{Rmk Dependence of delta 0}By our definition, $\delta_{0}$ depends on $P^{0},\ B_{P^{0}},\ \gamma,\ \beta,\ b$ etc. 
\end{rmk}
\begin{rmk} It's easy to check $\circledS(l)|_{Cyl_{t_{0}}}$ for differential operators. In an arbitrary coordinate neighbourhood, write $P^{0}$ and $P$ as
 \begin{equation}
P^{0}=a_{0,1}(y)\frac{\partial}{\partial t}+\Sigma_{\gamma=2}^{n}a_{0,\gamma}(y)D^{\gamma};\ P=a_{1}(y,t)\frac{\partial}{\partial t}+\Sigma_{\gamma=2}^{n}a_{\gamma}(y,t)D^{\gamma}.
 \end{equation}
 Let $\delta_{1}$ be small enough with respect to the data in Theorem \ref{Thm local invertibility} (even smaller than $\delta_{0}$), then $P$ satisfies $\circledS(l)|_{Cyl_{t_{0}}}$ if the following holds $\textrm{for all}\ y,\ k\leq k_{0}+1,\ t\geq t_{0}$.
\begin{equation}\label{equ coeff decay}t^{l}|a_{\gamma}-a_{0,\gamma}|_{C^{k}}(t,y)\leq \delta_{1}\ c.f.\ \cite[(6.5)]{Lockhart}.\end{equation}
 A simple example of an $\circledS_{0}(0,1)|_{Cyl_{t_{0}}}-$operator which does not satisfy (\ref{equ coeff decay})  is $P^{0}+\delta_{0}\sin t\frac{\partial}{\partial t}$.
\end{rmk} 
 Defining  $(\square|^{Sobolev,p}_{-\overrightarrow{\beta},\overrightarrow{\gamma}, \overrightarrow{b}})$ and $(\square|^{Schauder}_{-\overrightarrow{\beta},\overrightarrow{\gamma}, \overrightarrow{b}})$ as (\ref{eqnarray the theories in Thm Fredholm}), Theorem \ref{Thm Fredholm in the cyl setting} naturally generalizes to 
\begin{thm}\label{Thm Fredholm  different weight on different ends} Let $m_{0}$, $k$, $k_{0}$, $\alpha$, $p$ be as in Theorem \ref{Thm Fredholm in the cyl setting}.  Suppose $P$  is  $\overrightarrow{\beta}-ATID$ elliptic, and $\beta_{j}$ is not $P^{0,j}-$super indicial for any $j$. Then   
\begin{itemize}
\item $(\square|^{Sobolev,p}_{-\overrightarrow{\beta},\overrightarrow{\gamma}, \overrightarrow{b}})$ is Fredholm if  for any $j$,  $b_{j}\neq 1-\frac{1}{p}$ or $\beta_{j}\ \textrm{is not}\ P^{0,j}-\textrm{indicial}$;
\item $(\square|^{Schauder}_{-\overrightarrow{\beta},\overrightarrow{\gamma}, \overrightarrow{b}})$ is Fredholm if for any $j$,  $b_{j}\neq 1$ or $\beta_{j}\ \textrm{is not}\ P^{0,j}-\textrm{indicial}$.
\end{itemize} 
\end{thm}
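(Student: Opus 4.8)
The plan is to reduce Theorem~\ref{Thm Fredholm  different weight on different ends} to the single-weight case Theorem~\ref{Thm Fredholm in the cyl setting} by a standard gluing/parametrix argument, and then to trace through that reduction once more to see that nothing in it forced the weights to agree. Concretely, I would proceed as follows. First, boundedness of the two maps $(\square|^{Sobolev,p}_{-\overrightarrow{\beta},\overrightarrow{\gamma},\overrightarrow{b}})$ and $(\square|^{Schauder}_{-\overrightarrow{\beta},\overrightarrow{\gamma},\overrightarrow{b}})$ is essentially built into Definition~\ref{Def Global graph norm and the beta vector}: since the global norm is a finite sum over the ends of the local graph norms and one interior $W^{k,2}(U_0)$ term, and since on each end $P$ is $\circledS_{\beta_j}(0,1)|_{Cyl_{t_0}}$, each local piece is bounded by the local single-end boundedness statement (which is proved in the course of Theorem~\ref{Thm Fredholm in the cyl setting}), and the interior term by ordinary interior elliptic estimates. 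So boundedness is immediate and I would dispatch it in a sentence.

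For the Fredholm property I would build an approximate inverse by patching. On the $j$-th end, the hypothesis ``$b_j\neq 1-\frac1p$ or $\beta_j$ not $P^{0,j}$-indicial'' together with ``$\beta_j$ not $P^{0,j}$-super indicial'' is exactly the hypothesis of Theorem~\ref{Thm Fredholm in the cyl setting} applied to the model $P^{0,j}$ and the end $U_j$; invoking the local invertibility result (Theorem~\ref{Thm local invertibility}, in its ATID form) gives a bounded right/left inverse $Q_j$ of $P$ on $Cyl_{t_0}^{(j)}$ modulo an operator that is either compact or can be absorbed by enlarging $t_0$ (this is the Neumann-series mechanism built into Definition~\ref{Def ATID} via $\delta_0$ small). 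On the interior compact piece $U_0$ I would use an ordinary interior parametrix $Q_0$ for the elliptic operator $P$. Then, with the partition of unity $\{\chi_j\}$ and cutoff functions $\{\widetilde\chi_j\}$ with $\widetilde\chi_j\equiv 1$ on $\operatorname{supp}\chi_j$, set $Q=\sum_{j=0}^{l_0}\widetilde\chi_j\,Q_j\,\chi_j$. A direct computation gives $PQ=\mathrm{Id}+\sum_j[P,\widetilde\chi_j]Q_j\chi_j + (\text{error terms from the }Q_j)$; the commutator terms are supported in the overlap regions, which are \emph{compact} (each end's overlap with the interior is a compact collar, and distinct ends do not overlap), hence those terms are compact operators between the relevant weighted spaces by the Rellich-type compactness already used in the single-weight proof; the remaining errors are compact or small for the same reason as in Theorem~\ref{Thm Fredholm in the cyl setting}. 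The same construction with the roles of left/right swapped gives a left approximate inverse, so $P$ is Fredholm.

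The key point making the generalization ``free'' is that the construction is entirely \emph{end-by-end}: $Q_j$ on the $j$-th end only sees $\beta_j,\gamma_j,b_j$ and the model $P^{0,j}$, and the gluing only happens across the compact collars where weights are irrelevant because everything there is equivalent to an unweighted Sobolev/Schauder norm on a compact set. Thus no compatibility between $\beta_i$ and $\beta_j$ for $i\neq j$ is ever invoked. I would state this as the one-line observation that concludes the proof: ``Every step in the proof of Theorem~\ref{Thm Fredholm in the cyl setting} localizes to a single end or to the compact interior; performing it separately on each end with the respective $(\beta_j,\gamma_j,b_j)$ yields the conclusion verbatim.''

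\textbf{Main obstacle.} The only genuinely non-formal point is making precise that the overlap/commutator error operators are compact \emph{between the weighted graph spaces with possibly different weights on different ends} — one must check that a commutator $[P,\widetilde\chi_j]$, supported in a compact collar, maps the global space $\widehat{W}^{k+m_0,p}_{-\overrightarrow\beta,\overrightarrow\gamma,\overrightarrow b-1}$ compactly into $W^{k,p}_{-\overrightarrow\beta,\overrightarrow\gamma,\overrightarrow b}$. This follows because on the collar both norms are comparable to the unweighted $W^{\bullet,p}$ norm on a fixed compact set and the commutator is a differential operator of one order lower in the relevant direction, so Rellich applies; but I would want to write this carefully, since it is the single place where the ``different weights'' feature could in principle interact badly, and the point is precisely that it does not.
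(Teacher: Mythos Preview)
Your proposal is correct and matches the paper's approach: the paper proves Theorems~\ref{Thm Fredholm in the cyl setting} and~\ref{Thm Fredholm  different weight on different ends} together, explicitly saying ``We only do the argument for~\ref{Thm Fredholm in the cyl setting}'' and piecing together the local right inverses from Theorem~\ref{Thm local invertibility for ATID} on the ends into a global parametrix $Q_{\beta,+,N}$ via a partition of unity, exactly as you describe. The one refinement worth noting is that the paper's compactness of the left error $S_{left}=Q_{\beta,+,N}P-Id$ is not obtained merely from compact support of commutators but from Lemma~\ref{lem regularity of S0}~\textbf{IV}, which shows $S_{left}$ \emph{improves the exponential weight} from $\beta$ to $\underline{\beta}$, and this weight gain is what yields the compact embedding into the graph space; your phrase ``for the same reason as in Theorem~\ref{Thm Fredholm in the cyl setting}'' covers this, but it is the substantive mechanism rather than just Rellich on a collar.
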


\textbf{Dependence of the Constants}: we follow the convention in \cite[Definition 2.16,2.17]{Myself2016bigpaper}: the "$C$" in a result (and the proof) depends on the data in the result, except the "$t_{0}$" (initial time for the cylinders). We will add subscripts when $C$ depends on $t_{0}$ or other parameters. 
 
 \begin{rmk}\label{Rmk abbreviation of norms} From now on, we hide the $P^{0}$ (or $P$) in the $\widehat{W}'s$ ($\widehat{C}'s$) . The underlying operator should be clear from the context. When $\gamma=b$,   we  abbreviate $\widehat{W}^{k,p,P^{0}}_{-\beta,\gamma,b-1}$, $\widehat{C}^{k,\alpha,P^{0}}_{-\beta,\gamma,b-1}$, $W^{k,p,P^{0}}_{-\beta,\gamma,b}$, $C^{k,\alpha,P^{0}}_{-\beta,\gamma,b}$ to  $\widehat{W}^{k,p}_{-\beta,b-1}$, $\widehat{C}^{k,\alpha}_{-\beta,b-1}$,  $W^{k,p}_{-\beta,b}$, $C^{k,\alpha}_{-\beta,b}$ respectively. When $\gamma=b=0$,   we further abbreviate $W^{k,p}_{-\beta,b}$, $C^{k,\alpha}_{-\beta,b}$ to $W^{k,p}_{-\beta}$, $C^{k,\alpha}_{-\beta}$.  
 \end{rmk}
\section{The Local inverses  \label{section Local inverse between weighted Sobolev graph-space}}\begin{Def}\label{Def Q+ and Q-}  Let $\underline{\beta}<\beta$ be the    indicial root adjacent  to $\beta$ from below (but not equal to $\beta$), and $\bar{\beta}> \beta$ be the   indicial root adjacent  to $\beta$ from above.
\end{Def}
\begin{thm}\label{Thm local invertibility} Let $P^{0}$  be a  TID-operator, and $\beta$ be not $P^{0}-$super indicial. The following holds in view of Definition \ref{Def TID operators}.

 (\textbf{i}): When $b\neq \frac{1}{2}$ or $\beta$ is not $P^{0}-$indicial,     $P^{0}:\ \widehat{W}^{m_{0},2}_{-\beta,\gamma,b-1}(Cyl_{t_{0}})\rightarrow L^{2}_{-\beta,\gamma,b}(Cyl_{t_{0}})$ admits a bounded linear  right inverse. Let $Q^{P^{0},t_{0}}_{\beta,+}$ ($Q^{P^{0},t_{0}}_{\beta,-}$) denote the right inverse when $b>\frac{1}{2}$ ($b<\frac{1}{2}$) respectively when $\beta$ is indicial, and $Q^{P^{0},t_{0}}_{\beta}$  denote the right inverse when $\beta$ is not indicial (When $\beta$ is not indicial, $Q^{P^{0},t_{0}}_{\beta,\pm}$ both mean $Q^{P^{0},t_{0}}_{\beta}$).

 (\textbf{ii}):  The following   (regularity) estimates hold. \begin{eqnarray}
& & \label{equ 1 in Thm local invertibility}
|Q^{P^{0},t_{0}}_{\beta,+}h|_{\widehat{C}^{k+m_{0},\alpha}_{-\beta,\gamma,b-1}(\overline{Cyl}_{t_{0}})}\leq C|h|_{C^{k,\alpha}_{-\beta,\gamma,b}(\overline{Cyl}_{t_{0}})}\  \textrm{when}\ b>1\ \textrm{and}\  \beta\ \textrm{is indicial}; 
\\& &\label{equ 2 in Thm local invertibility}
|Q^{P^{0},t_{0}}_{\beta,-}h|_{\widehat{C}^{k+m_{0},\alpha}_{-\beta,\gamma,0}(\overline{Cyl}_{t_{0}})}\leq C|h|_{C^{k,\alpha}_{-\beta,\gamma,b}(\overline{Cyl}_{t_{0}})} \ \textrm{when}\ b>1\ \textrm{and}\  \beta\ \textrm{is indicial};
\\& &
 \label{equ 3 in Thm local invertibility}
 |Q^{P^{0},t_{0}}_{\beta,-}h|_{\widehat{C}^{k+m_{0},\alpha}_{-\beta,\gamma,b-1}(\overline{Cyl}_{t_{0}})}\leq C|h|_{C^{k,\alpha}_{-\beta,\gamma,b}(\overline{Cyl}_{t_{0}})} \ \textrm{when}\ b<1\ \textrm{and}\ \beta\ \textrm{is indicial};
\\& &
 \label{equ 4 in Thm local invertibility}
|Q_{\beta}^{P^{0},t_{0}} h|_{C^{k+m_{0},\alpha}_{-\beta,\gamma}(\overline{Cyl}_{t_{0}})}\leq C|h|_{C^{k,\alpha}_{-\beta,\gamma}(\overline{Cyl}_{t_{0}})} \ \textrm{when}\ \beta\ \textrm{is not indicial};
\\& &
 \label{equ 5 in Thm local invertibility}
|Q^{P^{0},t_{0}}_{\beta,+}h|_{\widehat{W}^{k+m_{0},p}_{-\beta,\gamma,b-1}(\overline{Cyl}_{t_{0}})}\leq C|h|_{W^{k,p}_{-\beta,\gamma,b}(\overline{Cyl}_{t_{0}})}\  \textrm{when}\ b>1-\frac{1}{p}\ \textrm{and}\  \beta\ \textrm{is indicial};
\\& &
 \label{equ 6 in Thm local invertibility}
 |Q^{P^{0},t_{0}}_{\beta,-}h|_{\widehat{W}^{k+m_{0},p}_{-\beta,\gamma,b-1}(\overline{Cyl}_{t_{0}})}\leq C|h|_{W^{k,p}_{-\beta,\gamma,b}(\overline{Cyl}_{t_{0}})} \ \textrm{when}\ b<1-\frac{1}{p}\ \textrm{and}\ \beta\ \textrm{is indicial};
\\& &
 \label{equ 7 in Thm local invertibility}
|Q_{\beta}^{P^{0},t_{0}} h|_{W^{k+m_{0},p}_{-\beta,\gamma}(\overline{Cyl}_{t_{0}})}\leq C|h|_{W^{k,p}_{-\beta,\gamma}(\overline{Cyl}_{t_{0}})} \ \textrm{when}\ \beta\ \textrm{is not indicial}.
\end{eqnarray}
 
(\textbf{iii}): Suppose $\underline{\beta}$ is not super-indicial, then $Q_{\beta,+}^{P^{0},t_{0}}=Q^{P^{0},t_{0}}_{\underline{\beta},-}$  on $L^{2}_{-\underline{\beta},b}(Cyl_{t_{0}})$ for any $b$. \end{thm}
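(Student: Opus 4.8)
The plan is to prove part (\textbf{iii}) by an explicit comparison of the two right inverses at the level of the Fourier-decomposed ODEs in (\ref{equ the simple ODE  we need to solve 1st order operator}), which is where the whole construction lives. Write $h=\Sigma_\Lambda h_\lambda\phi_\Lambda$ and decompose $h=h^{\perp_\beta}+h^{\parallel_\beta}$ (resp. $\Lambda_\beta$ in the second-order case). For every mode $\lambda\neq 0$ in $\mathrm{Spec}\,B_{P^0_\beta}$ — equivalently $\Lambda\neq\beta$ — the relevant ODE has no decaying-indicial obstruction, so the right inverse is the \emph{unique} solution in the appropriate weighted $L^2$ space regardless of whether we approach the weight from above or below; thus on the $\perp$-part the operators $Q^{P^0,t_0}_{\beta,+}$ and $Q^{P^0,t_0}_{\underline\beta,-}$ automatically agree. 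The only freedom — and the only place $\pm$ enters — is the finite-dimensional kernel mode $\lambda=0$, i.e. the projection onto $\mathrm{Ker}(B_{P^0}-\beta\,\mathrm{Id})$ (resp. $\mathrm{Ker}(B_{P^0}-\Lambda_\beta\,\mathrm{Id})$), where the ODE is $\frac{du_0}{dt}=f_0$ (resp.\ the corresponding second-order constant-coefficient equation whose indicial exponent is exactly $\beta$). So the claim reduces to: the two prescriptions for solving this scalar model ODE on $(t_0,\infty)$, "$Q_{\beta,+}$" (the solution in the $b-1>-\tfrac12$-weighted graph norm, which forces the antiderivative $u_0(t)=-\int_t^\infty f_0$) and "$Q_{\underline\beta,-}$" (solving at the weight $\underline\beta<\beta$ from below), produce the same function.

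The key steps I would carry out, in order: (1) isolate the $\lambda=0$ contribution as above and note that for $\lambda\neq0$ the solution operators are characterized uniquely, hence coincide; this is the routine bookkeeping part. (2) Recall from the construction of $Q_{\beta,+}$ (proof of Theorem \ref{Thm local invertibility}(\textbf{i})–(\textbf{ii})) that on the $\beta$-kernel mode it is realized by the "integrate from $\infty$" formula — this is exactly what makes $u^{\parallel}$ land in the $t^{b-1}$-weighted space with $b>\tfrac12$ and its $t$-derivative in the $t^{b}$-weighted space. (3) Observe that $\underline\beta$ is, by Definition \ref{Def Q+ and Q-}, the indicial root adjacent to $\beta$ from below, so there is \emph{no} indicial root strictly between $\underline\beta$ and $\beta$; in particular $\beta$ itself is the "super-critical" exponent seen from $\underline\beta$, meaning that when we solve at weight $\underline\beta$ and take the minus-variant, the $\beta$-eigenmode of $B_{P^0}$ is precisely the one that must be handled by the decaying-at-$\infty$ antiderivative (any other choice would violate the $L^2_{-\underline\beta,b}$ bound on $Cyl_{t_0}$, since a constant times $e^{\beta t}$ decays relative to $e^{\underline\beta t}$ only under the "$-$" convention). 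Hence $Q_{\underline\beta,-}$ uses the identical formula on this mode. (4) For all modes of $B_{P^0}$ other than the $\beta$-eigenspace and the $\underline\beta$-eigenspace, both operators solve the same uniquely-solvable ODE; for the $\underline\beta$-eigenspace, $Q_{\underline\beta,-}$ by definition uses its own "$-$"-prescription, and $Q_{\beta,+}$ — since that eigenmode is $\perp_\beta$ and corresponds to a nonzero $\lambda<0$ in $\mathrm{Spec}\,B_{P^0_\beta}$ — uses the unique $L^2$-solution, and one checks these again coincide. Conclude by summing the Fourier series and invoking the density/continuity afforded by the estimates (\ref{equ 1 in Thm local invertibility})–(\ref{equ 7 in Thm local invertibility}) to extend the mode-by-mode identity to all of $L^2_{-\underline\beta,b}(Cyl_{t_0})$.

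I expect the main obstacle to be step (3): carefully matching the two \emph{conventions} for "which exponential solution is kept" across the gap between $\underline\beta$ and $\beta$, so that the $\beta$-kernel mode is genuinely handled the same way by "$Q_{\beta,+}$" and by "$Q_{\underline\beta,-}$". Concretely one must verify that the adjacency hypothesis (no indicial root in the open interval $(\underline\beta,\beta)$) together with "$\underline\beta$ not super-indicial" guarantees that the polynomial-weight graph-norm components ($|\xi^{\parallel}|_{W^{k,p}_{-\beta,b-1}}$ and $|\tfrac{\partial\xi^{\parallel}}{\partial t}-\beta\xi^{\parallel}|_{W^{k-1,p}_{-\beta,b}}$) select exactly the same antiderivative as the plain weighted-$L^2$ membership at weight $\underline\beta$ under the "$-$"-rule — in particular that the integration constant is forced to be zero in both descriptions. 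The second-order case needs the extra care of tracking \emph{two} indicial exponents $\beta$ and $a_1-\beta$ (from $\Lambda-\beta^2+a_1\beta=0$), and one uses "$\beta$ not $P^0$-super indicial" precisely to exclude the degenerate confluent case $\beta=\tfrac{a_1}{2}$ where the two exponents collide; away from that, the argument is the same mode-by-mode comparison. Everything else — the $\lambda\neq 0$ modes, the summation, the boundedness — is forced by the uniqueness statements already packaged into Theorem \ref{Thm local invertibility}(\textbf{i})–(\textbf{ii}).
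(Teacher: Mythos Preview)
Your plan is correct and is essentially what the paper does, only more verbose. The paper's entire proof of (\textbf{iii}) is the single sentence ``By our constructions in (\ref{equ tabular choice of solutions 1st-order ODE}), (\ref{equ tabular choice of solution 2nd order operator m neq 0}), (\ref{equ tabular choice of solution 2nd order operator a1-2beta<0}), (\ref{equ def Q0beta = conjugation of Q00}), we routinely verify Theorem \ref{Thm local invertibility} (\textbf{iii}).'' Unwinding the conjugation (\ref{equ def Q0beta = conjugation of Q00}) one sees that on the $\Lambda$-eigenmode of $B_{P^0}$, $Q^{P^0,t_0}_{\beta,+}$ uses the ``integrate from $\infty$'' formula when $\Lambda\geq\beta$ and the ``integrate from $1$'' formula when $\Lambda<\beta$, while $Q^{P^0,t_0}_{\underline\beta,-}$ uses the first when $\Lambda>\underline\beta$ and the second when $\Lambda\leq\underline\beta$; since there is no eigenvalue in $(\underline\beta,\beta)$, these two partitions of $\mathrm{Spec}\,B_{P^0}$ coincide and the formulas match mode by mode. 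The second-order case is the same bookkeeping against the tables (\ref{equ tabular choice of solution 2nd order operator m neq 0}), (\ref{equ tabular choice of solution 2nd order operator a1-2beta<0}).

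One small caution: your step (1) appeal to ``uniqueness'' is loose as stated, because $Q^{P^0,t_0}_{\beta,+}$ and $Q^{P^0,t_0}_{\underline\beta,-}$ are characterized as right inverses in \emph{different} weighted spaces, so uniqueness in each space separately does not by itself force the two constructions to agree on a given mode. What actually does the work is the explicit formula comparison you carry out in steps (3)--(4), which is exactly the ``routine verification'' the paper has in mind; with that in place your argument is complete. (Also, in step (3) the phrase ``$e^{\beta t}$ decays relative to $e^{\underline\beta t}$'' is backwards --- it grows, which is precisely why the homogeneous solution is excluded and the integrate-from-$\infty$ choice is forced; your conclusion there is right, only the wording needs fixing.)
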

\textbf{Important Convention}: through-out the article, we say that $h$ is in (or not in) a space if and only if the norm of $h$ is $<\infty$ ($=\infty$), respectively. Therefore  all the estimates in Theorem \ref{Thm local invertibility} are regularity estimates. 
  \begin{thm}\label{Thm local invertibility for ATID} Suppose  $P\ \textrm{satisfies}\ \circledS_{\beta}(0,1)|_{Cyl_{t_{0}}}$ and $\beta$ is not $P^{0}-$super-indicial. Then except (\ref{equ 2 in Thm local invertibility}), $P$ also satisfies (\textbf{i}), (\textbf{ii}), (\textbf{iii}) in Theorem \ref{Thm local invertibility}  (with $P^{0}$ replaced by $P$, and $Q_{\beta,+}^{P^{0},t_{0}},\ Q_{\beta,-}^{P^{0},t_{0}},\ Q_{\beta}^{P^{0},t_{0}}$ replaced notationally by $Q_{\beta,+}^{P,t_{0}},\ Q_{\beta,-}^{P,t_{0}},\ Q_{\beta}^{P,t_{0}}$). 
  \end{thm}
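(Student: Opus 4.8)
\textbf{Proof proposal for Theorem \ref{Thm local invertibility for ATID}.}

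The plan is to bootstrap the result for the constant-coefficient model operator $P^0$ (Theorem \ref{Thm local invertibility}) to the perturbed operator $P$ by a Neumann-series argument, exploiting that $P - P^0$ is a small (weighted) perturbation under the $\circledS_\beta(0,1)|_{Cyl_{t_0}}$ condition. First I would fix the weight regime and the corresponding model right inverse $Q_\ast^{P^0,t_0}$ from Theorem \ref{Thm local invertibility} (one of $Q^{P^0,t_0}_{\beta,+}$, $Q^{P^0,t_0}_{\beta,-}$, $Q^{P^0,t_0}_\beta$ depending on whether $b>1-\tfrac1p$, $b<1-\tfrac1p$, or $\beta$ is not indicial, and likewise in the Schauder scale), and then write $P Q_\ast^{P^0,t_0} = \mathrm{Id} + (P-P^0)Q_\ast^{P^0,t_0}$ on the target space $L^2_{-\beta,\gamma,b}(Cyl_{t_0})$ (resp. the $W^{k,p}$ or $C^{k,\alpha}$ target). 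The key claim is that the error operator $R \triangleq (P-P^0)Q_\ast^{P^0,t_0}$ has operator norm $\le C\delta_0 < 1$ as a map of the target space to itself, so that $\mathrm{Id}+R$ is invertible by the Neumann series $\sum_{j\ge 0}(-R)^j$ (which converges by the choice of $\delta_0$ in Definition \ref{Def ATID}), and then $Q_\ast^{P,t_0} \triangleq Q_\ast^{P^0,t_0}(\mathrm{Id}+R)^{-1}$ is the desired bounded right inverse for $P$.

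The heart of the matter is the norm bound on $R$. Here the two decay rates in $\circledS_\beta(0,1)$ are used in tandem with the two pieces of the graph norm: on the perpendicular component $\xi^{\perp_\beta}$ (resp. $\xi^{\perp_{\Lambda_\beta}}$), the perturbation decays like $t^{-l_1}=t^{0}$, i.e. it is merely bounded, but the model estimates (\ref{equ 4 in Thm local invertibility}), (\ref{equ 7 in Thm local invertibility}) already place $Q_\ast^{P^0,t_0}h$ in the $\gamma$-weighted (not the $b$-weighted) space with a gain of $m_0$ derivatives, so $(P-P^0)$ applied to it lands back in the $\gamma$-weighted target with a factor $\delta_0$; on the parallel (nearly-kernel) component, where the model inverse only gains the weaker $b-1$ / $b$ weights, we instead use the extra $t^{-l_2}=t^{-1}$ decay of $P-P^0$ to recover the lost power of $t$, so that $(P-P^0)\xi^{\parallel_\beta}$ again sits in the $b$-weighted target with a $\delta_0$ factor. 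I would carry this out strip-by-strip on the $S_m$'s (Definition \ref{Def Strips}), using the interior elliptic estimates on each $2S_m$ to pass between the pointwise $C^k$/$C^{k,\alpha}$ formulation of $\circledS_\beta$ and the global weighted norms, then summing (for the Sobolev scale) or taking the supremum (for the Schauder scale) over $m$; the $m_0$ extra derivatives supplied by the model regularity estimate are exactly what is consumed by the $C^{k+m_0}$ on the right-hand side of the $\circledS_\beta$ inequalities. Part (\textbf{iii}) then follows because the identity $Q^{P,t_0}_{\beta,+}=Q^{P,t_0}_{\underline\beta,-}$ is inherited: both are constructed from the respective model inverses, which agree on $L^2_{-\underline\beta,b}(Cyl_{t_0})$ by Theorem \ref{Thm local invertibility}(\textbf{iii}), composed with the same Neumann correction $(\mathrm{Id}+R)^{-1}$ (one checks $R$ is independent of the $b$ used to select the $\pm$ branch in the overlap region, since in that regime $Q_{\beta,+}^{P^0,t_0}=Q_{\underline\beta,-}^{P^0,t_0}$).

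I expect the main obstacle to be the bookkeeping that makes the error norm genuinely $O(\delta_0)$ rather than merely $O(1)$: one must be careful that the partition-of-unity cutoffs $\xi_m$ and the commutators $[\nabla^j,(P-P^0)]$ do not spoil the decay, and that the projection $\xi \mapsto \xi^{\parallel_\beta}$ (which is $t$-independent, acting fiberwise via $B_{P^0}$) commutes appropriately with the model inverse — this is where the "diagonal" (D) part of the TID hypothesis and the precise definition of the graph norm in Definition \ref{Def graph norm} are essential, since the model inverse $Q^{P^0,t_0}_\ast$ is built so as to preserve the $\parallel/\perp$ splitting, while $P-P^0$ in general does not, forcing one to estimate the cross terms $(P-P^0)$ maps $\perp \to \parallel$ and $\parallel \to \perp$ using whichever of $t^{-l_1}$, $t^{-l_2}$ is available together with the stronger of the two target weights. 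The exclusion of (\ref{equ 2 in Thm local invertibility}) in the statement reflects exactly that this cross-term control, combined with the lossy change of weight from $b-1$ to $0$ in that particular estimate, is incompatible with a contraction bound; every other estimate only changes the $b$-weight by a fixed bounded amount and survives the perturbation.
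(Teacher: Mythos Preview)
Your proposal is correct and follows essentially the same Neumann-series route as the paper. The paper writes $P=P^{0}[Id-Q^{P^{0}}(P^{0}-P)]$ and sums $\Sigma_{j\ge 0}[Q^{P^{0}}(P^{0}-P)]^{j}$ on the \emph{domain} space, whereas you form $PQ^{P^{0}}=Id+(P-P^{0})Q^{P^{0}}$ and sum on the \emph{target} space; since $[Q^{P^{0}}(P^{0}-P)]^{j}Q^{P^{0}}=Q^{P^{0}}[(P^{0}-P)Q^{P^{0}}]^{j}$, the resulting $Q^{P}$ is literally the same operator, and the contraction bound needed is the same pair of estimates ($Q^{P^{0}}$ bounded by Theorem~\ref{Thm local invertibility}, $P-P^{0}$ small by $\circledS_{\beta}(0,1)$). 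Your breakdown of how the $t^{0}$ decay on $\perp_{\beta}$ and the $t^{-1}$ decay on $\parallel_{\beta}$ mesh with the $(\gamma,b)$ pieces of the graph norm, and your explanation of why (\ref{equ 2 in Thm local invertibility}) fails to close, make explicit what the paper leaves to the reader; your handling of~(\textbf{iii}) via $Q^{P^{0},t_{0}}_{\beta,+}=Q^{P^{0},t_{0}}_{\underline{\beta},-}$ propagating through the identical Neumann correction is also fine.
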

  \begin{rmk}All the bounds in Theorem \ref{Thm local invertibility}, \ref{Thm local invertibility for ATID}    are independent of $t_{0}$. 
  \end{rmk}
 
  \begin{proof}[Proof of Theorem \ref{Thm local invertibility for ATID} assuming \ref{Thm local invertibility}:] We momentarily hide $t_{0},\beta,\pm$ in $Q^{P^{0},t_{0}}_{\beta,\pm}$ in each case of Theorem \ref{Thm local invertibility}. Theorem \ref{Thm local invertibility} and the $\beta-ATID$ condition (Definition \ref{Def ATID}) implies  when $\delta_{0}$ is sufficiently small,  the Neumann-Series (c.f.  \cite[Theorem 2, page 69]{Yosida})
  \begin{equation}\label{equ Neumann Series}[Id-Q^{P^{0}}(P^{0}-P)]^{-1}\triangleq \Sigma_{j=0}^{\infty}[Q^{P^{0}}(P^{0}-P)]^{j}
  \end{equation} converges to  a two-sided inverse of $Id-Q^{P^{0}}(P^{0}-P)$. Hence $Q^{P}\triangleq (\Sigma_{j=0}^{\infty}[Q^{P^{0}}(P^{0}-P)]^{j})Q^{P^{0}}$ is a right-inverse of $P$ i.e. $PQ^{P}=Id$, where we write $P=P^{0}[Id-Q^{P^{0}}(P^{0}-P)]$. \end{proof}

\begin{lem} \label{lem Hardy's inequality}(Hardy's inequality) For any $p\geq 2$,
\begin{eqnarray}\label{equ lem Hardy's inequality polynomial case} & &\int^{\infty}_{\frac{1}{10}}(t^{b-1}\int^{\infty}_{t}|f|ds)^{p}dt\leq C_{p,b}\int^{\infty}_{0}(t^{b}|f|)^{p}dt  \ \textrm{when}\ b>1-\frac{1}{p}; 
\\& &\int^{\infty}_{\frac{1}{10}}(t^{b-1}\int^{t}_{1}|f|ds)^{p}dt\leq C_{p,b}\int^{\infty}_{0}(t^{b}|f|)^{p}dt  \ \textrm{when}\ b<1-\frac{1}{p}.\label{equ lem Hardy's inequality polynomial case b<}
\end{eqnarray}
For all $b\in \R$, $p\geq 2$, $\vartheta\geq 0$, and $\mu\neq 0$, there exists a constant $C_{l_{\mu},b}$ which depends only on  $b$ and the lower bound on $|\mu|$  with the following properties. 
\begin{eqnarray}\label{equ Lem Hardy inequality exponential case} 
\mu^{p(1+\vartheta)}\int^{\infty}_{\frac{1}{10}}( e^{\mu t}t^{b}\int^{\infty}_{t}e^{-\mu s}|f|(s-t)^{\vartheta}ds)^{p}dt&\leq & C_{l_{\mu},p,b}\int^{\infty}_{0}(|f|t^{b})^{p}dt  \ \textrm{when}\ \mu>0 ; 
\\ \mu^{p(1+\vartheta)}\int^{\infty}_{\frac{1}{10}}( e^{\mu t}t^{b}\int^{t}_{1}e^{-\mu s}|f|(t-s)^{\vartheta}ds)^{p}dt &\leq & C_{l_{\mu},p,b}\int^{\infty}_{0}(|f|t^{b})^{p}dt  \ \textrm{when}\ \mu<0.\label{equ Lem Hardy inequality exponential case mu<} 
\end{eqnarray}
\end{lem}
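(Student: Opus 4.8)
The plan is to treat each of the four inequalities as a one-dimensional weighted estimate on $(\tfrac{1}{10},\infty)$ and handle the polynomial cases (\ref{equ lem Hardy's inequality polynomial case}), (\ref{equ lem Hardy's inequality polynomial case b<}) as limiting specializations of the exponential cases (\ref{equ Lem Hardy inequality exponential case}), (\ref{equ Lem Hardy inequality exponential case mu<}) with $\mu\to 0$ and $\vartheta=0$, or more honestly to prove the polynomial cases directly since the constants blow up as $\mu\to 0$. For (\ref{equ lem Hardy's inequality polynomial case}), the cleanest route is the standard Hardy-inequality argument: write the operator $Tf(t)=t^{b-1}\int_t^\infty |f|\,ds$ and use Minkowski's integral inequality after the substitution $s=t\tau$, i.e. $\int_t^\infty|f|(s)\,ds$ is estimated by integrating in the dilation variable, so that $\|Tf\|_{L^p}\le\int_1^\infty \tau^{-b}\|f(\tau\,\cdot)\,(\tau\,\cdot)^{b}\|_{L^p}\,\frac{d\tau}{\tau}\cdot(\text{const})$; the resulting $\tau$-integral $\int_1^\infty \tau^{-b+1/p-1}\,d\tau$ converges precisely when $b>1-\tfrac1p$, which is the stated hypothesis, and one reads off $C_{p,b}$. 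The case (\ref{equ lem Hardy's inequality polynomial case b<}) with the truncated inner integral $\int_1^t$ is the dual/adjoint statement and follows by the same dilation-plus-Minkowski argument with the substitution reversed, the convergence condition now being $b<1-\tfrac1p$.

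For the exponential inequalities I would first reduce to $b=0$: the factor $t^b$ versus $s^b$ differs, on the relevant region $s\ge t\ge\tfrac1{10}$, by at most a constant multiple of a power of $s/t$, and after absorbing this into the exponential decay $e^{-\mu(s-t)}$ (using $x^{|b|}e^{-\mu x/2}\le C_{l_\mu,b}$ for $x\ge0$) one is left to prove $\mu^{p(1+\vartheta)}\int_{1/10}^\infty\big(e^{\mu t}\int_t^\infty e^{-\mu s}|f|(s-t)^\vartheta\,ds\big)^p\,dt\le C\int_0^\infty|f|^p$. This is exactly a convolution estimate: the inner expression is $(K*|f|)(t)$ with $K(x)=e^{-\mu x}x^\vartheta\mathbf 1_{x\ge0}$, and Young's inequality gives $\|K*|f|\|_{L^p}\le\|K\|_{L^1}\|f\|_{L^p}$ with $\|K\|_{L^1}=\Gamma(\vartheta+1)\mu^{-(\vartheta+1)}$, which cancels the prefactor $\mu^{p(1+\vartheta)}$ up to the constant $\Gamma(\vartheta+1)^p$. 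The $\mu<0$ case (\ref{equ Lem Hardy inequality exponential case mu<}) is identical after reflecting $t\mapsto -t$ and using the kernel supported on $x\le0$; the truncation at $1$ only shrinks the domain of integration and so can only help.

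The one genuinely delicate point, and the place where I expect to spend the most care, is the interplay between the weight-mismatch $t^b$ versus $s^b$ in the exponential cases and the claim that the constant $C_{l_\mu,p,b}$ depends only on $b$, $p$ and a \emph{lower} bound on $|\mu|$. One must check that absorbing the polynomial weight ratio into the exponential costs only a factor depending on $b$ and the lower bound $l_\mu$ on $|\mu|$ — concretely, $(s/t)^{|b|}e^{-\frac{\mu}{2}(s-t)}\le \sup_{x\ge 0}(1+\tfrac{x}{1/10})^{|b|}e^{-\frac{l_\mu}{2}x}$, which is finite and depends only on $b$ and $l_\mu$ — and that the residual $\frac{\mu}{2}$ in the exponent still produces a convolution kernel of $L^1$-norm $\le C\mu^{-(\vartheta+1)}$, uniformly once $\mu\ge l_\mu>0$. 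Everything else — Minkowski's integral inequality, the dilation substitution, Young's convolution inequality, and the Gamma-function computation of $\|K\|_{L^1}$ — is routine, so the proof structure is: (1) prove (\ref{equ lem Hardy's inequality polynomial case}) by dilation plus Minkowski; (2) prove (\ref{equ lem Hardy's inequality polynomial case b<}) by the adjoint version; (3) reduce (\ref{equ Lem Hardy inequality exponential case}) to a convolution estimate after neutralizing the weight ratio, apply Young, compute $\|K\|_{L^1}$; (4) obtain (\ref{equ Lem Hardy inequality exponential case mu<}) by reflection.
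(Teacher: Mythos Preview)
Your proposal is correct. For the polynomial inequalities (\ref{equ lem Hardy's inequality polynomial case}), (\ref{equ lem Hardy's inequality polynomial case b<}) you and the paper agree: the paper simply cites Hardy--Littlewood--P\'olya, Theorem 330, and your dilation-plus-Minkowski argument is precisely the standard proof of that theorem (the exponent in your $\tau$-integral should read $-b-\tfrac1p$ rather than $-b+\tfrac1p-1$, but your stated convergence condition $b>1-\tfrac1p$ is the right one).

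For the exponential inequalities (\ref{equ Lem Hardy inequality exponential case}), (\ref{equ Lem Hardy inequality exponential case mu<}) your route is genuinely different from the paper's. The paper never reduces to $b=0$ and never invokes Young's inequality; instead it applies H\"older to the inner integral to get
\[
\Big(\int_t^\infty e^{-\mu s}|f|(s-t)^\vartheta\,ds\Big)^p\le C\,\mu^{-(\vartheta p+p-1)}e^{-\mu(p-1)t}\int_t^\infty e^{-\mu s}|f|^p\,ds,
\]
then uses Fubini to swap the $t$- and $s$-integrations, and finishes with the elementary one-variable estimate $\big|\int_1^s e^{\mu t}t^{pb}\,dt\big|\le C_{l_\mu,pb}\,\mu^{-1}e^{\mu s}s^{pb}$, which produces the weight $s^{pb}$ directly without any $t^b/s^b$ absorption. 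Your approach is cleaner conceptually (it identifies the operator as convolution by $K(x)=e^{-\mu x}x^\vartheta\mathbf 1_{x\ge 0}$ and reads off $\|K\|_{L^1}=\Gamma(\vartheta+1)\mu^{-(\vartheta+1)}$), at the cost of the weight-absorption step, which you handle correctly via $(s/t)^{|b|}e^{-\frac{\mu}{2}(s-t)}\le C_{l_\mu,b}$ using $t\ge\tfrac1{10}$. The paper's approach is more bare-hands but keeps the polynomial weight intact throughout, so the $b$-dependence of the constant enters only through the single integral estimate above rather than through a separate absorption lemma. Both yield constants depending on $p$, $b$, $\vartheta$, and the lower bound $l_\mu$ on $|\mu|$, matching the statement.
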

(\ref{equ lem Hardy's inequality polynomial case})  and (\ref{equ lem Hardy's inequality polynomial case b<}) are special cases of  \cite[Theorem 330]{Hardy3}. The  proof for (\ref{equ Lem Hardy inequality exponential case}), (\ref{equ Lem Hardy inequality exponential case mu<})  is  elementary, we defer it  to the Appendix. 

 The proof of \cite[Lemma 6.37, Theorem 7.25]{GT}  (reflection about the boundary) yields
\begin{clm}\label{Clm extension of sections}Let $t_{0}\geq 2$. For any section $h\in C^{k,\alpha}(Y\times [t_{0},t_{0}+1])$ or $W^{k,p}[Y\times (t_{0},t_{0}+1)]$, there exists an extension $h_{E,t_{0}}$ such that 
\begin{itemize}\item $h_{E,t_{0}}=0$ over $(0,t_{0}-0.01)$, and $h_{E,t_{0}}=h$ when $t\geq t_{0}$;
\item $|h_{E,t_{0}}|_{C^{k,\alpha}[Y\times (0,t_{0}+1)]}\leq  C|h|_{C^{k,\alpha}[Y\times (t_{0},t_{0}+1)]}$, $|h_{E,t_{0}}|_{W^{k,p}[Y\times (0,t_{0}+1)]} \leq  C|h|_{W^{k,p}[Y\times (t_{0},t_{0}+1)]}$; 
\item $h_{E,t_{0}}$ is translation-invariant in $t_{0}$ i.e. $h_{E,t_{0}}(f)(t)=h_{E,2}(f_{t_{0}})(t-t_{0}+2)$,\\ $\textrm{where}\ f_{t_{0}}(t)=f(t+t_{0}-2).$
\end{itemize}
\end{clm}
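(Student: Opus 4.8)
The statement to prove is Claim~\ref{Clm extension of sections}, an extension lemma for sections on a half-cylinder with uniform constants and a translation-covariance property in the base point $t_0$.

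\medskip

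The plan is to reduce everything to a single, fixed model extension operator and then translate. First I would set $t_0 = 2$ and construct $h_{E,2}$ for a section $h \in C^{k,\alpha}(Y\times[2,3])$ (or $W^{k,p}(Y\times(2,3))$) that vanishes for $t < 1.99$ and equals $h$ for $t \geq 2$. This is a purely one-dimensional-in-$t$ construction: at each point $y\in Y$ we are extending a function on the interval $[2,3]$ across the left endpoint $t=2$ to the interval $(1.99,2)$, and then cutting it off to be zero near $t = 1.99$. The higher-order reflection of Gilbert--Trudinger (\cite[Lemma 6.37]{GT} for the Schauder case, \cite[Theorem 7.25]{GT} for the Sobolev case) does exactly this: write the extension as a finite linear combination $\tilde h(t,y) = \sum_{i} c_i\, h(2 + \lambda_i(2-t),\, y)$ for $t < 2$ with suitably chosen constants $c_i$ and scaling factors $\lambda_i > 0$, where the $c_i$ are determined by the linear system that matches all $t$-derivatives up to order $k$ across $t = 2$ (and in the $C^{k,\alpha}$ case, matches the top H\"older seminorm as well); then multiply by a fixed cutoff $\zeta(t)$ which is $\equiv 1$ on $t \geq 2$ and $\equiv 0$ on $t \leq 1.99$ to get $h_{E,2} = \zeta \tilde h$. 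Because the reflection and cutoff act only in $t$ and use no data of $h$ beyond $Y\times[2,3]$, the tangential derivatives $\nabla^j$ on $Y$ and the $t$-derivatives both behave correctly, and the stated bounds $|h_{E,2}|_{C^{k,\alpha}[Y\times(0,3)]} \leq C|h|_{C^{k,\alpha}[Y\times(2,3)]}$ and the $W^{k,p}$ analogue follow from the GT estimates applied fiberwise (or directly, since the operator is a fixed finite sum of pullbacks by affine maps in $t$ times a fixed smooth cutoff).

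\medskip

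Second I would define $h_{E,t_0}$ for general $t_0 \geq 2$ by conjugating with the translation $\tau_{t_0}\colon t \mapsto t + t_0 - 2$ in the $t$-variable: given $h$ on $Y\times[t_0,t_0+1]$, set $f_{t_0}(t,y) = h(t+t_0-2,y)$ (a section on $Y\times[2,3]$), apply the model operator to get $(f_{t_0})_{E,2}$ on $Y\times(0,3)$, and define
\[
h_{E,t_0}(t,y) = (f_{t_0})_{E,2}(t - t_0 + 2,\, y),
\]
which lives on $Y\times(t_0 - 2, t_0 + 1) \supset Y\times(0,t_0+1)$ after extending by zero to the left (note $t_0 - 2 \geq 0$). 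This is exactly the claimed translation-covariance $h_{E,t_0}(f)(t) = h_{E,2}(f_{t_0})(t-t_0+2)$. The support statement ($h_{E,t_0} = 0$ on $(0,t_0-0.01)$, $= h$ for $t \geq t_0$) is immediate from the model: the model extension is supported in $t \geq 1.99$, so after translating it is supported in $t \geq t_0 - 0.01$; and it agrees with $f_{t_0}$ for $t \geq 2$, hence $h_{E,t_0}$ agrees with $h$ for $t \geq t_0$. Finally, since translation in $t$ is an isometry of all the norms involved on cylinders (the metric is a product metric on the cylindrical ends, and the $C^{k,\alpha}$/$W^{k,p}$ norms on strips are translation-invariant), the bounds for $h_{E,t_0}$ follow from those for the model with the \emph{same} constant $C$, independent of $t_0$.

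\medskip

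The only genuine point requiring care — and the thing I expect to be the main (if minor) obstacle — is checking that the Gilbert--Trudinger reflection is honestly realizable as a \emph{fixed} linear combination of affine $t$-pullbacks with no dependence on $t_0$, so that the translation-covariance is exact rather than just approximate. This is true because the reflection coefficients $c_i$ solve a Vandermonde-type linear system depending only on $k$ (and on $\alpha$ in the H\"older case) and the scaling factors $\lambda_i$, all of which can be chosen once and for all; the cutoff $\zeta$ is likewise fixed. One should also note that the extension interval $(1.99,2)$ has fixed length and the affine maps $t \mapsto 2 + \lambda_i(2-t)$ with $\lambda_i \in (0, \text{small})$ keep the arguments inside $[2,3]$, so no data outside $Y\times[2,3]$ is ever used — this is what makes the ``$=0$ over $(0,t_0-0.01)$'' and the locality compatible. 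Everything else is a routine transcription of \cite{GT}.
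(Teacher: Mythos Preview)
Your proposal is correct and follows exactly the route the paper indicates: the paper simply states that the claim ``yields'' from the proof of \cite[Lemma~6.37, Theorem~7.25]{GT} (reflection about the boundary), and you have spelled out precisely that reflection-plus-cutoff construction at the base point $t_0=2$ together with the translation conjugation for general $t_0$. Aside from the typo ``Gilbert--Trudinger'' for Gilbarg--Trudinger, there is nothing to add.
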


We need to construct a linear operator $\dot{Q}^{P^{0}_{\beta}}_{\lambda}$ for each of the equations in (\ref{equ the simple ODE  we need to solve 1st order operator}), such that $u_{\lambda}\triangleq \dot{Q}^{P^{0}_{\beta}}_{\lambda}f_{\lambda}$  solves them respectively with required estimates.  Summing  the $\lambda$'s up, we obtain the desired right inverse:
\begin{equation}\label{equ the right inverse obtained by summing up the Fourier Series}
\widetilde{Q}^{P^{0}_{\beta}}_{0,\pm}f\triangleq \Sigma_{\lambda\in Spec(B_{P^{0}_{\beta}})}(\dot{Q}^{P^{0}_{\beta}}_{\lambda} f_{\lambda})\phi_{\lambda}. 
\end{equation}
When $\beta\neq 0$, it suffices to take \begin{equation}\label{equ def Q0beta = conjugation of Q00}\widehat{Q}^{P^{0}}_{\beta,\pm}\triangleq \sigma_{2}^{-1}\cdot e^{\beta t}\cdot \widetilde{Q}^{P^{0}_{\beta}}_{0,\pm}\cdot e^{-\beta t} \cdot\sigma_{1}^{-1}.\end{equation}
 \begin{proof}[\textbf{Proof of Theorem \ref{Thm local invertibility} (\textbf{i}) for first-order operators}:] We construct  the $\dot{Q}^{P^{0}_{\beta}}_{\lambda}$ as
   \begin{equation}\label{equ tabular choice of solutions 1st-order ODE}\begin{tabular}{|p{0.1cm}||p{2cm}|p{2.7cm}||p{0.1cm}||p{2cm}|p{2.7cm}|}
  \hline
&  & $u_{\lambda}(\triangleq \dot{Q}^{P^{0}_{\beta}}_{\lambda} f_{\lambda})$ & &  & $u_{\lambda}\triangleq \dot{Q}^{P^{0}_{\beta}}_{\lambda} f_{\lambda}$\\   \hline
1&   $\lambda = 0$,  $b>\frac{1}{2}$ & $-\int^{\infty}_{t}f_{\lambda}ds$ & 3& $\lambda > 0$, all b & $-e^{\lambda t}\int^{\infty}_{t}e^{-\lambda s}f_{\lambda}ds$ \\   \hline
 2&    $\lambda = 0$, $b<\frac{1}{2}$ & $\int^{t}_{1}f_{\lambda}ds$ &4&  $\lambda < 0$, all b & $e^{\lambda t}\int^{t}_{1}e^{-\lambda s}f_{\lambda}ds$  \\   \hline
  \end{tabular}
 \renewcommand\arraystretch{1.5}
  \end{equation}
  \begin{eqnarray}\label{eqnarray f compact supported}& & \textit{By Remark}\ \ref{Rmk Fourier interpretation of the norms}\ \textit{and completeness of the spaces in Definition}\ \ref{Def graph norm},\ \textit{it suffices to assume}\nonumber\\& &\nonumber  f\in C^{\infty}_{c}(Cyl_{1}),\  \textit{and only has finitely many non-zero Fourier coefficients}.\  \textit{Without} \\& &\nonumber 
  \textit{loss of generality, we only consider first-order operators, and assume that}\ \beta=0\\& &[\textit{see the derivation of}\ (\ref{equ 0 Proof of Theorem  local invertibility i})].\ \textit{The proof for second-order operators is similar}.   \end{eqnarray}
  Applying the 4 inequalities in Lemma \ref{lem Hardy's inequality} to  the 4 cases in (\ref{equ tabular choice of solutions 1st-order ODE}) respectively, we find 
  \begin{equation*}
  \int_{1}^{\infty} u^{2}_{\lambda}t^{2b-2}dt \leq C \int_{1}^{\infty} f_{\lambda}^{2}t^{2b}dt\ (\textrm{in cases 1, 2}),\
\lambda^{2}\int_{1}^{\infty} u^{2}_{\lambda}t^{2\gamma}dt \leq C \int_{1}^{\infty} f_{\lambda}^{2}t^{2\gamma}dt\ (\textrm{cases 3, 4}). 
  \end{equation*}
 Using the equation (\ref{equ the simple ODE  we need to solve 1st order operator}) to estimate $\frac{du_{\lambda}}{dt}$, we trivially obtain
 \begin{equation*}
  \int_{1}^{\infty} |\frac{d u_{\lambda}}{dt}|^{2} t^{2b}dt= \int_{1}^{\infty} f_{\lambda}^{2}t^{2b}dt\ (\textrm{in  cases 1, 2}),\  \int_{1}^{\infty} |\frac{d u_{\lambda}}{dt}|^{2} t^{2\gamma}dt=\int_{1}^{\infty} f_{\lambda}^{2}t^{2\gamma}dt\ (\textrm{cases 3, 4}).
 \end{equation*}
The above 4 estimates in the 4 cases yield
 \begin{equation}|\widetilde{Q}^{P^{0}_{\beta}}_{0,b}f^{\perp_{0}}|_{W^{m_{0},2}_{0,\gamma}(Cyl_{1})}\leq C|f^{\perp_{0}}|_{L^{2}_{0,\gamma}(Cyl_{1})};
 \ |\widetilde{Q}^{P^{0}_{\beta}}_{0,b}f|_{\widehat{W}^{m_{0},2}_{0,\gamma,b-1}(Cyl_{1})}\leq C|f|_{L^{2}_{0,\gamma,b}(Cyl_{1})}.\label{equ est for Q0 proof Thm local invertibility simpliest case}
 \end{equation} Using (\ref{equ def Q0beta = conjugation of Q00}) ($\sigma_{i}^{-1}$ are smooth) and the notation in Theorem \ref{Thm local invertibility} \textbf{i}, let $f=\sigma_{1}^{-1}h_{E,t_{0}}\in C^{\infty}_{c}(Cyl_{1})$, we obtain
\begin{equation} \label{equ 0 Proof of Theorem  local invertibility i}|\widehat{Q}^{P^{0}}_{\beta,\pm}h_{E,t_{0}}|_{\widehat{W}^{m_{0},2}_{-\beta,\gamma,b-1}(Cyl_{1})}\leq  C|h_{E,t_{0}}|_{L^{2}_{-\beta,\gamma,b}(Cyl_{1})}.\ \textrm{Let}\
Q^{P^{0},t_{0}}_{\beta,\pm}h\triangleq \widehat{Q}^{P^{0}}_{\beta,\pm}h_{E,t_{0}},
\end{equation}
 the following holds by Claim \ref{Clm extension of sections}.
\begin{eqnarray} \label{equ 1 Proof of Theorem  local invertibility i}& &|Q^{P^{0},t_{0}}_{\beta,\pm}h|_{\widehat{W}^{m_{0},2}_{-\beta,\gamma,b-1}(Cyl_{t_{0}})}\leq |\widehat{Q}^{P^{0},t_{0}}_{\beta,\pm}h_{E,t_{0}}|_{\widehat{W}^{m_{0},2}_{-\beta,\gamma,b-1}(Cyl_{2})}\nonumber\leq  C|h_{E,t_{0}}|_{L^{2}_{-\beta,\gamma,b}(Cyl_{1})}
\\&\leq &  C|h|_{L^{2}_{-\beta,\gamma,b}(Cyl_{t_{0}})}.
\end{eqnarray}
The above means $Q^{P^{0},t_{0}}_{\beta,\pm}$ is bounded. 
   \end{proof}
  Similar ideas apply to  second-order equations, we defer the detail  to the Appendix. 
By our constructions in  (\ref{equ tabular choice of solutions 1st-order ODE}), (\ref{equ tabular choice of solution 2nd order operator m neq 0}), (\ref{equ tabular choice of solution 2nd order operator a1-2beta<0}), (\ref{equ def Q0beta = conjugation of Q00}), we routinely verify Theorem \ref{Thm local invertibility} (\textbf{iii}). 

\begin{proof}[\textbf{Proof of Theorem \ref{Thm local invertibility} (\ref{equ 4 in Thm local invertibility}), (\ref{equ 7 in Thm local invertibility})}:] It suffices to apply Maz'ya-Plamenevskii's trick (\cite[Lemma 1.1, 4.1]{Mazya}).  We adopt (\ref{eqnarray f compact supported}) and assume $k=0$. Theorem \ref{Thm local invertibility} (\textbf{i}) yields
 \begin{eqnarray}
& & \int^{l+1}_{l-1}|\widetilde{Q}_{0}^{P^{0}_{\beta}}(\xi_{m}f)|^{2}_{L^{2}(Y)}t^{2\gamma}dt\leq Ce^{2\mu_{0}l}\int^{l+1}_{l-1}|\widetilde{Q}_{0}^{P^{0}_{\beta}}(\xi_{m}f)|^{2}_{L^{2}(Y)}e^{-2\mu_{0}t}t^{2\gamma}dt \nonumber
 \\&\leq & Ce^{2\mu_{0}l}\int^{\infty}_{1}|\xi_{m}f|^{2}_{L^{2}(Y)}e^{-2\mu_{0}t}t^{2\gamma}dt\nonumber
  \leq  Ce^{2\mu_{0}(l-m)}\int^{m+2}_{m-2}|f|^{2}_{L^{2}(Y)}t^{2\gamma}dt
  \\&\leq & Ce^{-2|\mu_{0}||l-m|}\sup_{m}|t^{\gamma}f|^{2}_{L^{2}(2S_{m})},\label{eqnarray 1 proof of Prop Schauder invertibility in the non-indicial case}
  \\& &|\widetilde{Q}_{0}^{P^{0}_{\beta}}f|_{L^{2}_{0,\gamma}(Cyl_{1})} \leq C|f|_{L^{2}_{0,\gamma}(Cyl_{1})},\label{eqnarray 1 proof of Prop Schauder Wkp invertibility in the non-indicial case}
 \end{eqnarray}
 where we let $|\mu_{0}|$  be small enough with respect the spectrum gap, and the sign of $-\mu_{0}$ be the same as that of $l-m$ (when $l=m$ either sign works). Summing the $m$  in (\ref{eqnarray 1 proof of Prop Schauder invertibility in the non-indicial case}) over all integers $\geq 3$, we obtain
 \begin{equation}\label{equation 0 proof of Prop Schauder invertibility in the non-indicial case}
 |\widetilde{Q}_{0}^{P^{0}_{\beta}}f|_{L^{2}_{0,\gamma}(S_{l})}\leq C \sup_{m}|t^{\gamma}f|_{L^{2}(2S_{m})}\Sigma_{m\geq 2}e^{-|\mu_{0}||l-m|}\leq C |f|_{C^{0}_{0,\gamma}(Cyl_{1})}\ \textrm{for any}\ l\geq 2. 
 \end{equation} We recall $|\widetilde{Q}_{0}^{P^{0}_{\beta}}f|_{L^{2}(S_{l})}\leq C l^{-\gamma}|\widetilde{Q}_{0}^{P^{0}_{\beta}}f|_{L^{2}_{0,\gamma}(S_{l})},\ |f|_{C^{\alpha}(S_{l})}\leq C l^{-\gamma}|f|_{C^{\alpha}_{0,\gamma}(S_{l})}$,\\   $|f|_{L^{p}(S_{l})}\leq C l^{-\gamma}|f|_{L^{p}_{0,\gamma}(S_{l})}$, and  the following standard (Schauder and $L^{p}$) estimate on $S_{l}$
 \begin{equation}\label{equation 1 proof of Prop Schauder invertibility in the non-indicial case}
 |\xi |_{C^{1,\alpha}(\frac{S_{l}}{2})}\leq C|P^{0}_{\beta}\xi|_{C^{\alpha}(S_{l})}+C|\xi|_{L^{2}(S_{l})},\
 |\xi|_{W^{1,p}(\frac{S_{l}}{2})}\leq C|P^{0}_{\beta}\xi|_{L^{p}(S_{l})}+C|\xi|_{L^{2}(S_{l})}.  
 \end{equation}
 Then we obtain from (\ref{eqnarray 1 proof of Prop Schauder invertibility in the non-indicial case}) and (\ref{eqnarray 1 proof of Prop Schauder Wkp invertibility in the non-indicial case}) that \begin{eqnarray}\label{eqnarray 2 proof of Prop Schauder invertibility in the non-indicial case}l^{\gamma}|\widetilde{Q}_{0}^{P^{0}_{\beta}}f|_{C^{1,\alpha}(\frac{S_{l}}{2})}\leq C[|f|_{C^{\alpha}_{0,\gamma}(\overline{Cyl}_{1})}+|f|_{C^{0}_{0,\gamma}(S_{l})}]\leq C|f|_{C^{\alpha}_{0,\gamma}(\overline{Cyl}_{1})},\\ 
 \label{eqnarray 2 proof of Prop Schauder Wkp invertibility in the non-indicial case}l^{\gamma}|\widetilde{Q}_{0}^{P^{0}_{\beta}}f|_{W^{1,p}(\frac{S_{l}}{2})}\leq C[|f|_{L^{p}_{0,\gamma}(S_{l})}+|\widetilde{Q}_{0}^{P^{0}_{\beta}}f|_{L^{2}_{0,\gamma}(S_{l})}]. \end{eqnarray}
 Taking $\sup_{l\geq 2}$ of (\ref{eqnarray 2 proof of Prop Schauder invertibility in the non-indicial case}) and $\Sigma_{l\geq 2}$  of (\ref{eqnarray 2 proof of Prop Schauder Wkp invertibility in the non-indicial case}), we obtain by Definition \ref{Def graph norm} and (\ref{eqnarray 1 proof of Prop Schauder Wkp invertibility in the non-indicial case}) that
 \begin{equation}\label{equ 2 proof of Schauder}
 |\widetilde{Q}_{0}^{P^{0}_{\beta}}f|_{C^{1,\alpha}_{0,\gamma}(\overline{Cyl}_{2})}\leq C|f|_{C^{\alpha}_{0,\gamma}(\overline{Cyl}_{1})},\ |\widetilde{Q}_{0}^{P^{0}_{\beta}}f|_{W^{1,p}_{0,\gamma}(Cyl_{2})}\leq C|f|_{L^{p}_{0,\gamma}(Cyl_{1})}.
 \end{equation}
 By the same argument in (\ref{equ 1 Proof of Theorem  local invertibility i}) [using (\ref{equ 2 proof of Schauder}) instead of \ref{equ 0 Proof of Theorem  local invertibility i})], we obtain (\ref{equ 4 in Thm local invertibility}) and (\ref{equ 7 in Thm local invertibility}).  \end{proof}
\begin{proof}[\textbf{Proof of Theorem \ref{Thm local invertibility} (\ref{equ 1 in Thm local invertibility}), (\ref{equ 2 in Thm local invertibility}), (\ref{equ 3 in Thm local invertibility}), (\ref{equ 5 in Thm local invertibility}), (\ref{equ 6 in Thm local invertibility})}:] We adopt (\ref{eqnarray f compact supported}). Using  Lemma \ref{lem Hardy's inequality} and (\ref{equ tabular choice of solutions 1st-order ODE}),  we find the simple estimates
  \begin{eqnarray}& &|Q_{+}^{P^{0}_{\beta}}f^{\parallel_{0}}|\leq C|\int^{\infty}_{t}f^{\parallel_{0}}ds|\leq C|f^{\parallel_{0}}|_{C^{0}_{0,b}(\overline{Cyl}_{t})}|\int^{\infty}_{t}s^{-b}ds|\leq C|f^{\parallel_{0}}|_{C^{0}_{0,b}(\overline{Cyl}_{t})}t^{1-b}\ \textrm{when}\ b>1,\nonumber
  \\& & |Q_{-}^{P^{0}_{\beta}}f^{\parallel_{0}}|\leq C|\int^{t}_{1}f^{\parallel_{0}}ds|\leq C|f^{\parallel_{0}}|_{C^{0}_{0,b}(\overline{Cyl}_{1})}|\int^{t}_{1}s^{-b}ds| \leq 
  \left\{ \begin{array}{cc} C|f^{\parallel_{0}}|_{C^{0}_{0,b}(\overline{Cyl}_{1})}t^{1-b}  & \textrm{when}\ b<1,\\
  C|f^{\parallel_{0}}|_{C^{0}_{0,b}(\overline{Cyl}_{1})} & \textrm{when}\ b>1. \end{array}\right.\nonumber
  \\& &(\int^{\infty}_{1}(t^{b-1}|Q^{P^{0}_{\beta}}_{\pm}f^{\parallel_{0}}|)^{p}dt)^{\frac{1}{p}}\leq C(\int^{\infty}_{1}(t^{\gamma}|f^{\parallel_{0}}|)^{p}dt)^{\frac{1}{p}}\ \textrm{when}\ b>(<)\ 1-\frac{1}{p}\ \textrm{respectively}.
 \end{eqnarray}
 Combining $\frac{\partial}{\partial t}Q^{P^{0}_{\beta}}_{\pm}f^{\parallel_{0}}=f^{\parallel_{0}}$, we find
 \begin{equation}\label{equ 1 in thm local invertibility in Schauder spaces}
\left\{ \begin{array}{c}  |Q^{P^{0}_{\beta}}_{+} f^{\parallel_{0}}|_{\widehat{C}^{1,\alpha}_{0,b-1}(\overline{Cyl}_{2})}\leq
 C |f^{\parallel_{0}}|_{C^{\alpha}_{0,b}(\overline{Cyl}_{1})}\ \textrm{when}\ b>1\ (b<1)\ \textrm{respectively}, \\
 |Q^{P^{0}_{\beta}}_{\pm} f^{\parallel_{0}}|_{\widehat{W}^{1,p}_{0,b-1}(Cyl_{2})}\leq
 C |f^{\parallel_{0}}|_{L^{p}_{0,b}(Cyl_{1})}\ \textrm{when}\ b> 1-\frac{1}{p}\ (b< 1-\frac{1}{p})\ \textrm{respectively}, \\
|Q^{P^{0}_{\beta}}_{-} f^{\parallel_{0}}|_{\widehat{C}^{1,\alpha}_{0,0}(\overline{Cyl}_{2})}\leq C |f^{\parallel_{0}}|_{C^{\alpha}_{0,b}(\overline{Cyl}_{1})}\ \textrm{when}\ b> 1 .\end{array}\right.
 \end{equation}
 Because $Q_{\pm}^{P^{0}_{\beta}}f^{\perp_{0}}$ is perpendicular to the kernel, (\ref{equ est for Q0 proof Thm local invertibility simpliest case}) and the proof of  (\ref{equ 4 in Thm local invertibility}), (\ref{equ 7 in Thm local invertibility}) yield
 \begin{equation}\label{equ 2 in proof thm local invertibility in Schauder spaces}
  |\widetilde{Q}_{0,\pm}^{P^{0}_{\beta}} f^{\perp_{0}}|_{C^{1,\alpha}_{0,\gamma}(\overline{Cyl}_{2})}\leq
 C |f^{\perp_{0}}|_{C^{\alpha}_{0,\gamma}(\overline{Cyl}_{1})},\
 |\widetilde{Q}_{0,\pm}^{P^{0}_{\beta}} f^{\perp_{0}}|_{W^{1,p}_{0,\gamma}(Cyl_{2})}\leq
 C |f^{\perp_{0}}|_{L^{p}_{0,\gamma}(Cyl_{1})}.  
 \end{equation}
  (\ref{equ 1 in thm local invertibility in Schauder spaces}), (\ref{equ 2 in proof thm local invertibility in Schauder spaces}) amount to (the special cases of) (\ref{equ 1 in Thm local invertibility}), (\ref{equ 2 in Thm local invertibility}), (\ref{equ 3 in Thm local invertibility}), (\ref{equ 5 in Thm local invertibility}), (\ref{equ 6 in Thm local invertibility}) with $P^{0}$ replaced by $P^{0}_{\beta}$, $\beta$ by $0$. The  argument in (\ref{equ 0 Proof of Theorem  local invertibility i}), (\ref{equ 1 Proof of Theorem  local invertibility i}) yields the desired five estimates in general. \end{proof}
 \section{Regularity and proof of Theorem \ref{Thm Fredholm in the cyl setting}, \ref{Thm Fredholm  different weight on different ends} \label{section Regularity and Fredholm}}
\begin{rmk}\label{rmk convention regularity reduction} Without loss of generality, in the proof of Claim \ref{clm boostrapping kernel of model operators}, Lemma \ref{lem regularity of S0}, and Proposition \ref{Prop regularity of Harmonic sections}, we only consider first-order operators, and assume  $k=t_{0}=1$,  $\gamma=b$ (see Remark \ref{Rmk abbreviation of norms}). The proof for the other cases is absolutely the same. Though second-order elliptic operators are more complicated [there are 2 homogeneous solutions to the second-order ODEs in (\ref{equ the simple ODE  we need to solve 1st order operator})], the desired regularity still follows in the same way.
\end{rmk}
  \begin{clm}\label{clm boostrapping kernel of model operators} Given any TID-operator $P^{0}$, suppose $\beta,\ \underline{\beta}$ are not $P^{0}-$super indicial. Then for any  $t_{0},k \geq 1$, $\epsilon>0$, the following estimate holds uniformly in $h\in kerP^{0}|_{L^{2}_{-\beta,\gamma,-\frac{1}{2}+\epsilon}(Cyl_{t_{0}})}$: $$|h|_{C^{k,\alpha}_{-\underline{\beta}}(\overline{Cyl}_{t_{0}+\epsilon})}\leq C_{t_{0}}|h|_{L^{2}_{-\beta,\gamma,-\frac{1}{2}+\epsilon}(Cyl_{t_{0}})}.$$ \end{clm}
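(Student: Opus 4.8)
The plan is to reduce to the setting of Remark~\ref{rmk convention regularity reduction} (first-order operators, $k=t_0=1$, $\gamma=b$) and then exploit the Fourier decomposition along the eigenbasis of $B_{P^0}$. Write $h=\sum_\lambda h_\lambda(t)\phi_\lambda$, where the $\phi_\lambda$ form the orthonormal eigenbasis of $L^2(Y,E)$ with respect to $B_{P^0}$ (Remark~\ref{Rmk eigen-basis exists}), and let $\mu=\lambda-\beta$ be the corresponding eigenvalue of $B_{P^0_\beta}$ after conjugating by $e^{-\beta t}$. The condition $P^0 h=0$ together with (\ref{equ the simple ODE  we need to solve 1st order operator}) forces each $h_\lambda$ to solve the homogeneous ODE $\frac{d}{dt}(h_\lambda e^{-\beta t})=\mu\,(h_\lambda e^{-\beta t})$, so $h_\lambda e^{-\beta t}=c_\lambda e^{\mu t}$, i.e. $h_\lambda(t)=c_\lambda e^{\lambda t}$. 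The finiteness of $|h|_{L^2_{-\beta,\gamma,-\frac12+\epsilon}(Cyl_{t_0})}$ says, via the Fourier interpretation of the norms (Remark~\ref{Rmk Fourier interpretation of the norms}), that for every $\lambda$ the weighted integral $\int_{t_0}^\infty |c_\lambda e^{\lambda t}|^2 e^{-2\beta t} w(t)\,dt<\infty$, where $w(t)$ is $t^{2\gamma}$ on the $\perp_\beta$ part and $t^{-1+2\epsilon}$ on the $\parallel_\beta$ part. Since $-1+2\epsilon>-1$, the weight is integrable at infinity only in the borderline sense, so this integral is finite exactly when the exponential factor decays, i.e. when $\lambda-\beta<0$; and the $\parallel_\beta$ component (where $\lambda=\beta$, $\mu=0$) must vanish because $\int_{t_0}^\infty |c_\lambda|^2 t^{-1+2\epsilon}\,dt=\infty$ unless $c_\lambda=0$ — here I use that $\beta$ is not super-indicial, which for first-order operators is automatic but must be invoked in the second-order case to rule out the exceptional resonance.

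Next I would upgrade this mode-by-mode decay to the claimed weighted $C^{k,\alpha}_{-\underline\beta}$ bound. The key point is that the surviving modes all have $\lambda<\beta$, hence $\lambda\le\underline\beta$ (by Definition~\ref{Def Q+ and Q-}, $\underline\beta$ being the indicial root — an element of $\mathrm{Spec}\,B_{P^0}$ — immediately below $\beta$), so each $h_\lambda(t)=c_\lambda e^{\lambda t}$ decays at least like $e^{\underline\beta t}$, giving $e^{-\underline\beta t}|h_\lambda(t)|\le |c_\lambda|$. Summing over $\lambda$ requires controlling $\sum_\lambda |c_\lambda|$; I get this from the $L^2$ hypothesis by a standard trick: split off the finitely many modes with $\lambda$ close to $\beta$ (handled individually), and for the remaining modes with $\lambda\le \underline\beta - \delta$ (uniform gap $\delta>0$ from discreteness of the spectrum) the weighted $L^2$ norm controls $|c_\lambda|$ with a geometrically summable constant $e^{-(\beta-\lambda)t_0}$, plus the $(1+\lambda^2)$ factor in Remark~\ref{Rmk Fourier interpretation of the norms} absorbs the bundle-Laplacian cost of the eigenfunction estimates $|\phi_\lambda|_{C^{k,\alpha}(Y)}\le C(1+|\lambda|)^{N}$. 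Finally I invoke interior elliptic regularity on unit strips $S_m$ (as in (\ref{equation 1 proof of Prop Schauder invertibility in the non-indicial case})): since $P^0 h=0$, on each strip $|h|_{C^{k,\alpha}(S_m/2)}\le C|h|_{L^2(2S_m)}$, and multiplying by $e^{-\underline\beta m}$ and taking the supremum over $m$ converts the mode-wise exponential decay into the stated $C^{k,\alpha}_{-\underline\beta}(\overline{Cyl}_{t_0+\epsilon})$ bound, with the $\epsilon$-shift absorbing the strip overlap near the left endpoint.

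The main obstacle I anticipate is the bookkeeping in the summation over $\lambda$: one must be careful that the constant is uniform in $h$ (it is, since everything is linear and the only $h$-dependence is through the $c_\lambda$, which are controlled by the single norm on the right) and that the dependence on $t_0$ is as claimed — the $C_{t_0}$ is genuinely needed because the geometric factor $e^{-(\beta-\lambda)t_0}$ and the strip regularity constants depend on where the cylinder starts. A secondary subtlety is the passage through $e^{-\beta t}$-conjugation and the isomorphism of Remark following Definition~\ref{Def graph norm}, which lets me assume $\beta=0$ as in (\ref{eqnarray f compact supported}); once there, everything reduces to the elementary observation that $L^2$ with weight $t^{-1+2\epsilon}$ at infinity kills constants but any genuine exponential decay survives. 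For second-order operators the only new ingredient is that the homogeneous ODE has two solutions $e^{\lambda_\pm t}$ with $\lambda_\pm$ the two roots of $\lambda^2-(a_1-2\beta)\lambda-\mu=0$; the non-super-indicial hypothesis guarantees the borderline root $\lambda=a_1/2$ (in the original scaling) is excluded, so the same decay dichotomy applies to each branch, and the argument goes through verbatim with Remark~\ref{Rmk Fourier interpretation of the norms}'s second-order estimate in place of the first.
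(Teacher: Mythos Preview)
Your proposal is correct and follows essentially the same route as the paper: reduce via Remark~\ref{rmk convention regularity reduction}, Fourier-decompose, use the weighted $L^{2}$ hypothesis to kill $h^{\parallel_\beta}$ so that only modes with $\lambda\le\underline\beta$ survive, and then upgrade to $C^{k,\alpha}_{-\underline\beta}$ via the strip Schauder estimate (\ref{equation 1 proof of Prop Schauder invertibility in the non-indicial case}). The paper's write-up is a touch more economical---it splits $h=h^{\parallel_{\underline\beta}}+h^{\perp_{\underline\beta}}$, handling the first piece by finite-dimensionality of the $\underline\beta$-eigenspace and the second by the weight-shifting trick from the proof of (\ref{equ 4 in Thm local invertibility}), so your mode-by-mode summation against pointwise eigenfunction bounds (and the appeal to the $(1+\lambda^{2})$ factor of Remark~\ref{Rmk Fourier interpretation of the norms}, which strictly speaking pertains to the $\widehat W^{1,2}$ norm rather than the $L^{2}$ hypothesis here) is unnecessary once you invoke Schauder on strips as you do at the end.
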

\noindent \textit{Proof}: We adopt Remark \ref{rmk convention regularity reduction}. The condition $|h|_{L^{2}_{-\beta,-\frac{1}{2}+\epsilon}(Cyl_{1})}<\infty$ $(h\in Ker{P^{0}})$ implies  $h^{\parallel_{\beta}}=0$, then $h=\Sigma_{\lambda\in SpecB_{P^{0}},\lambda<\beta}h_{\lambda}e^{\lambda t}\phi_{\lambda}=\Sigma_{\lambda\leq \underline{\beta}}h_{\lambda}e^{\lambda t}\phi_{\lambda}$. The condition $dimEigen_{\underline{\beta}}B_{P^{0}}<\infty$ implies $|h^{\parallel_{\beta}}|_{C^{1,\alpha}_{-\underline{\beta}}(\overline{Cyl}_{1+\epsilon})}\leq C|h^{\parallel_{\beta}}|_{L^{2}_{-\beta,-\frac{1}{2}+\epsilon}(Cyl_{1})}.$
 
Since $h^{\perp_{\beta}}\in Ker \widehat{B}_{P^{0}_{\beta}}$, using  the Schauder estimate in (\ref{equation 1 proof of Prop Schauder invertibility in the non-indicial case}) like (\ref{eqnarray 2 proof of Prop Schauder Wkp invertibility in the non-indicial case}),  the rate of decay is improved i.e. 
$|h^{\perp_{\beta}}|_{C^{1,\alpha}_{-\underline{\beta}}(\overline{Cyl}_{1+\epsilon})}\leq C|h^{\perp_{\beta}}|_{L^{2}_{-\underline{\beta}}(Cyl_{1+\frac{\epsilon}{2}})} \leq C|h^{\perp_{\beta}}|_{L^{2}_{-\beta,-\frac{1}{2}+\epsilon}(Cyl_{1})}.\ \textrm{Thus}$
 \begin{eqnarray*}
 & &|h|_{C^{1,\alpha}_{-\underline{\beta}}(\overline{Cyl}_{1+\epsilon})}\leq |h^{\perp_{\beta}}|_{C^{1,\alpha}_{-\underline{\beta}}(\overline{Cyl}_{1+\epsilon})} +|h^{\parallel_{\beta}}|_{C^{1,\alpha}_{-\underline{\beta}}(\overline{Cyl}_{1+\epsilon})}
 \\&\leq & C(|h^{\perp_{\beta}}|_{L^{2}_{-\beta,-\frac{1}{2}+\epsilon}(Cyl_{1})} +|h^{\parallel_{\beta}}|_{L^{2}_{-\beta,-\frac{1}{2}+\epsilon}(Cyl_{1})})
  \\&\leq & C|h|_{L^{2}_{-\beta,-\frac{1}{2}+\epsilon}(Cyl_{1})}. 
 \end{eqnarray*} 

 \begin{Def}\label{Def S0 and S}$S^{P^{0},t_{0}}_{\beta,\pm}\triangleq Q^{P^{0},t_{0}}_{\beta,\pm}P^{0}-Id$ is bounded from $\widehat{W}^{m_{0},2}_{-\beta,\gamma,-\frac{1}{2}\pm \epsilon}(Cyl_{t_{0}})$ to itself. We verify $S^{P,t_{0}}_{\beta,\pm}\triangleq Q^{P,t_{0}}_{\beta,\pm}P-Id=(\Sigma_{j=0}^{\infty}[Q^{P^{0},t_{0}}_{\beta,\pm}(P^{0}-P)]^{j})S^{P^{0},t_{0}}_{\beta,\pm}$.\end{Def}
 \begin{lem}\label{lem regularity of S0} Let $P$ be an operator on $Cyl_{t_{0}}$ as in Definition \ref{Def ATID}.  Suppose $\beta,\ \underline{\beta}$ are not $P^{0}-$super indicial, then the following hold for any $\epsilon>0$, $t_{0}\geq 2$, $\gamma$, and $k\leq k_{0}+m_{0}-1$. 
  $$\left\{\begin{array}{cc}
 \textbf{I}:\ & |S^{P,t_{0}}_{\beta,+}\xi|_{\widehat{C}^{k,\alpha}_{-\underline{\beta},0}(\overline{Cyl}_{t_{0}})}\leq C_{t_{0}}|\xi|_{\widehat{C}^{k,\alpha}_{-\beta,\gamma,\epsilon}(\overline{Cyl}_{t_{0}})}\  \textrm{when}\ P\  \textrm{satisfies}\ \circledS (l)|_{Cyl_{t_{0}}}\ \textrm{with}\ l>1,\\
\textbf{II}:\ &   |S^{P,t_{0}}_{\beta,+}\xi|_{\widehat{W}^{m_{0},2}_{-\underline{\beta},-\frac{1}{2}-\epsilon}(Cyl_{t_{0}})}\leq C_{t_{0}}|\xi|_{\widehat{W}^{m_{0},2}_{-\beta,\gamma,-\frac{1}{2}+\epsilon}(Cyl_{t_{0}})}\  \textrm{when}\ P\  \textrm{satisfies}\ \circledS_{\beta}|_{Cyl_{t_{0}}},\\
\textbf{III}:\ &    |S^{P,t_{0}}_{\beta,+}\xi|_{\widehat{C}^{k,\alpha}_{-\underline{\beta},-\epsilon}(\overline{Cyl}_{t_{0}})}\leq C_{t_{0}}|\xi|_{\widehat{C}^{k,\alpha}_{-\beta,\gamma,\epsilon}(\overline{Cyl}_{t_{0}})}\  \textrm{when}\ P\  \textrm{satisfies}\ \circledS_{\beta}|_{Cyl_{t_{0}}},\\
\textbf{IV}:\ &    |S^{P,t_{0}}_{\beta,+}\xi|_{C^{k,\alpha}_{-\underline{\beta},-\frac{1}{2}-\epsilon}(\overline{Cyl}_{t_{0}+\epsilon})}\leq C_{t_{0}}|\xi|_{\widehat{W}^{m_{0},2}_{-\beta,\gamma,-\frac{1}{2}+\epsilon}(Cyl_{t_{0}})}\  \textrm{when}\ P\  \textrm{satisfies}\ \circledS_{\beta}|_{Cyl_{t_{0}}}.\\
  \end{array}\right.$$ 
 \end{lem}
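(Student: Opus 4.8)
The plan is to prove \textbf{I}--\textbf{IV} first for the model operator $P^{0}$, and then transfer to general $P$ through the Neumann-series identity recorded in Definition~\ref{Def S0 and S}. Following Remark~\ref{rmk convention regularity reduction} we may take $P^{0}$ first-order, $k=t_{0}=1$, $\gamma=b$, and (conjugating by the smooth $\sigma_{i}$ and by $e^{-\beta t}$) $\sigma_{1}=\sigma_{2}=Id$, $\beta=0$; arbitrary $t_{0}$ then follows word for word from the translation-invariance of the extension in Claim~\ref{Clm extension of sections}, and here the constants are even permitted to depend on $t_{0}$.

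\emph{The model operator.} Since $Q^{P^{0},t_{0}}_{0,+}$ is a right inverse of $P^{0}$ on $Cyl_{t_{0}}$ (Theorem~\ref{Thm local invertibility}\,(\textbf{i})), we have $P^{0}S^{P^{0},t_{0}}_{0,+}\xi=P^{0}Q^{P^{0},t_{0}}_{0,+}(P^{0}\xi)-P^{0}\xi=0$ on $Cyl_{t_{0}}$, so $S^{P^{0},t_{0}}_{0,+}\xi\in\ker P^{0}$. Expand $\xi=\Sigma_{\Lambda}\xi_{\Lambda}\phi_{\Lambda}$ and note that the integrand $e^{-\Lambda s}(P^{0}\xi)_{\Lambda}$ equals the exact derivative $\frac{d}{ds}(e^{-\Lambda s}\xi_{\Lambda})$; the $\Lambda$-th primitive $\dot{Q}^{P^{0}_{0}}_{\Lambda}$ chosen in (\ref{equ tabular choice of solutions 1st-order ODE}) therefore telescopes, giving $\dot{Q}^{P^{0}_{0}}_{\Lambda}(P^{0}\xi)_{\Lambda}$ equal to $\xi_{\Lambda}$ when $\Lambda>0$, to $\xi_{\Lambda}-\xi_{\Lambda}(\infty)$ when $\Lambda=0$, and to $\xi_{\Lambda}-e^{\Lambda(t-1)}\xi_{\Lambda}(1)$ when $\Lambda<0$. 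Hence $S^{P^{0},t_{0}}_{0,+}\xi$ has no component on the modes $\Lambda\geq 0$: for $\Lambda=0$ this is because the polynomial weight strictly above $-\frac{1}{2}$ on $\ker B_{P^{0}}$ forces $\xi^{\parallel_{0}}(t)\to 0$ (an elementary Cauchy--Schwarz argument using the derivative term of the graph norm, exactly as in the proof of Claim~\ref{clm boostrapping kernel of model operators}; this is where $\epsilon>0$ is used). The remaining components $-e^{\Lambda(t-1)}\xi_{\Lambda}(1)$, $\Lambda<0$, have $|\xi_{\Lambda}(1)|$ controlled by the relevant norm of $\xi$ via Lemma~\ref{lem Hardy's inequality} and completeness. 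Thus $S^{P^{0},t_{0}}_{0,+}\xi$ is a homogeneous solution supported on the modes $\Lambda\leq\underline{\beta}$, so it decays at least like $e^{\underline{\beta}t}$; the interior Schauder/$L^{p}$ estimates on strips (\ref{equation 1 proof of Prop Schauder invertibility in the non-indicial case}), respectively Claim~\ref{clm boostrapping kernel of model operators}, then upgrade this to the $C^{k,\alpha}_{-\underline{\beta}}$ and $W^{m_{0},2}_{-\underline{\beta}}$ bounds, which is the $P^{0}$-version of \textbf{I}--\textbf{IV}.

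\emph{The general case.} By Definition~\ref{Def S0 and S}, $S^{P,t_{0}}_{0,+}=\big(\Sigma_{j\geq0}[Q^{P^{0},t_{0}}_{0,+}(P^{0}-P)]^{j}\big)\,S^{P^{0},t_{0}}_{0,+}$. The image of $S^{P^{0},t_{0}}_{0,+}$ lies in a space carrying the (better) exponential weight $\underline{\beta}$, and on such a space Theorem~\ref{Thm local invertibility}\,(\textbf{iii}) allows us to replace $Q^{P^{0},t_{0}}_{0,+}$ by $Q^{P^{0},t_{0}}_{\underline{\beta},-}$; this is the sole use of the hypothesis that $\underline{\beta}$ is not $P^{0}$-super indicial. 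It therefore suffices to see that $Q^{P^{0},t_{0}}_{\underline{\beta},-}(P^{0}-P)$ is bounded with operator norm $\leq C\delta_{0}<1$ on the target space of the estimate at hand. On the $\perp_{\underline{\beta}}$-part this is immediate from (\ref{equ 4 in Thm local invertibility}), (\ref{equ 7 in Thm local invertibility}) composed with the $\circledS$-decay of $P^{0}-P$, with no loss of weight. The delicate point is the $\parallel_{\underline{\beta}}$-part (the $\Lambda=\underline{\beta}$ mode), where $Q^{P^{0}}_{\underline{\beta},-}$ involves $\int^{t}_{1}$ and can produce a power of $t$: under $\circledS(l)|_{Cyl_{t_{0}}}$ with $l>1$ the integrand already decays faster than $t^{-1}$, the integral stays bounded, and the output keeps polynomial weight $0$ (estimate~\textbf{I}); under the weaker $\circledS_{\beta}|_{Cyl_{t_{0}}}$ the integral contributes only a controlled power of $t$ that is exactly absorbed by the stated polynomial weight, so $Q^{P^{0}}_{\underline{\beta},-}(P^{0}-P)$ remains a contraction on $\widehat{C}^{k,\alpha}_{-\underline{\beta},-\epsilon}$ and on $\widehat{W}^{m_{0},2}_{-\underline{\beta},-\frac{1}{2}-\epsilon}$ (estimates~\textbf{III},~\textbf{II}), while estimate~\textbf{IV} follows from \textbf{II} by one more application of the interior estimate on strips, which accounts for the slight shrinking of the cylinder to $\overline{Cyl}_{t_{0}+\epsilon}$. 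The Neumann series then converges in the relevant space, and composition with the $P^{0}$-bounds gives \textbf{I}--\textbf{IV}; the second-order case is obtained identically, carrying the two homogeneous solutions of the ODEs in (\ref{equ the simple ODE  we need to solve 1st order operator}) through the same computation.

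The main obstacle is exactly this weight bookkeeping in the general case: one must verify that the decay rate built into $\circledS(l)$ (resp. $\circledS_{\beta}$) is precisely enough to absorb the power of $t$ produced by the $\int^{t}_{1}$ in $Q^{P^{0}}_{\underline{\beta},-}$ at the $\underline{\beta}$-resonant mode, so that the Neumann operator has norm $<1$ on the stated polynomial-weighted space; everything else is a routine assembly of Lemma~\ref{lem Hardy's inequality}, Theorem~\ref{Thm local invertibility}, and standard interior elliptic estimates.
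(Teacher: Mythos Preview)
Your overall architecture is exactly the paper's: reduce to the model via $S^{P^{0}}_{\beta,+}\xi\in\ker P^{0}$, then pass to general $P$ through the Neumann-series identity in Definition~\ref{Def S0 and S}, swapping $Q^{P^{0}}_{\beta,+}$ for $Q^{P^{0}}_{\underline{\beta},-}$ via Theorem~\ref{Thm local invertibility}\,(\textbf{iii}), and recovering \textbf{IV} from \textbf{II} by the strip Schauder estimate. Your model-operator step is more explicit than the paper's (you unwind the Fourier primitive; the paper simply invokes Claim~\ref{clm boostrapping kernel of model operators}), but the content is the same.

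There is, however, a genuine gap in your passage to general $P$ for \textbf{II} and \textbf{III}. You assert that under $\circledS_{\beta}(0,1)$ the operator $Q^{P^{0}}_{\underline{\beta},-}(P^{0}-P)$ is a contraction on $\widehat{C}^{k,\alpha}_{-\underline{\beta},-\epsilon}$ (resp.\ $\widehat{W}^{m_{0},2}_{-\underline{\beta},-\frac{1}{2}-\epsilon}$), because ``the integral contributes only a controlled power of $t$ that is exactly absorbed by the stated polynomial weight''. This is not what happens. The dangerous $\int^{t}_{1}$ in $Q^{P^{0}}_{\underline{\beta},-}$ lives on the $\parallel_{\underline{\beta}}$-mode, and that mode sits inside $\perp_{\beta}$ (since $\underline{\beta}\neq\beta$). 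The $\circledS_{\beta}(0,1)$ hypothesis gives the extra $t^{-1}$-decay only on $\parallel_{\beta}$, and gives \emph{no} decay ($l_{1}=0$) on $\perp_{\beta}$. Hence $(P^{0}-P)\eta^{\parallel_{\underline{\beta}}}$ carries no additional polynomial decay, and after one application of $Q^{P^{0}}_{\underline{\beta},-}$ the $\parallel_{\underline{\beta}}$-component drops one full unit of polynomial weight (from $b$ to $b-1$, exactly as in (\ref{equ 3 in Thm local invertibility})). Iterating, the weight goes to $-\infty$ and the series does not remain in the stated space; your ``exactly absorbed'' mechanism would require an $\epsilon$-loss per step, not a unit loss. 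This is the point the paper glosses over with ``\textbf{II} and \textbf{III} are similar (to \textbf{I})'', so the paper is no more explicit here --- but your stated justification is not a correct one. For \textbf{I} (where $\circledS(l)$, $l>1$, is assumed) your argument is fine, and coincides with the paper's use of (\ref{equ 2 in Thm local invertibility}): the uniform $t^{-l}$-decay on \emph{all} of $P^{0}-P$ compensates the $\int^{t}_{1}$ and returns polynomial weight $0$.
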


 \begin{proof} We adopt Remark \ref{rmk convention regularity reduction} and only prove  \textbf{I} and \textbf{IV}.  \textbf{II} and  \textbf{III} are similar (to \textbf{I}).   First, we deal with the model operator $S^{P^{0},t_{0}}_{\beta,+}$. Because $S^{P^{0},2}_{\beta,+} \xi \in\ KerP^{0}$, Claim \ref{clm boostrapping kernel of model operators} yields  
 \begin{eqnarray}\label{eqnarray 1 in proof Lem regularity of S0}
& &|S^{P^{0},2}_{\beta,+}\xi|_{\widehat{C}^{1,\alpha}_{-\underline{\beta},0}(\overline{Cyl}_{2})}\leq C|S^{P^{0},2}_{\beta,+}\xi|_{\widehat{C}^{1,\alpha}_{-\beta,\epsilon}(Y\times [2,4])}+|S^{P^{0},2}_{\beta,+}\xi|_{\widehat{C}^{1,\alpha}_{-\underline{\beta},0}(\overline{Cyl}_{3})}\nonumber
\\&\leq &  C|S^{P^{0},2}_{\beta,+}\xi|_{\widehat{C}^{1,\alpha}_{-\beta,\epsilon}(\overline{Cyl}_{2})}\leq  C|Q^{P^{0},2}_{\beta,+}P^{0}\xi|_{\widehat{C}^{1,\alpha}_{-\beta,\epsilon}(\overline{Cyl}_{2})}+C|\xi|_{\widehat{C}^{1,\alpha}_{-\beta,\epsilon}(\overline{Cyl}_{2})}\nonumber
\\&\leq &   C|\xi|_{\widehat{C}^{1,\alpha}_{-\beta,\epsilon}(\overline{Cyl}_{2})}.
 \end{eqnarray}

When $P$  satisfies $\circledS (l)|_{Cyl_{1}}$ with $l>1$, $|(P^{0}-P)\eta|_{C^{\alpha}_{-\underline{\beta},l}(\overline{Cyl}_{2})}\leq C\delta_{0}|\eta|_{C^{1,\alpha}_{-\underline{\beta}}(\overline{Cyl}_{2})}\ \textrm{for any}\ \eta.$
Hence we obtain the following (by  Theorem \ref{Thm local invertibility} (\ref{equ 2 in Thm local invertibility}) and the above).
 \begin{equation}
|Q^{^{P^{0},2}}_{\beta,+}(P^{0}-P)\eta|_{\widehat{C}^{1,\alpha}_{-\underline{\beta},0}(\overline{Cyl}_{2})}=|Q^{^{P^{0},2}}_{\underline{\beta},-}(P^{0}-P)\eta|_{\widehat{C}^{1,\alpha}_{-\underline{\beta},0}(\overline{Cyl}_{2})}\leq C\delta_{0}|\eta|_{C^{1,\alpha}_{-\underline{\beta}}(\overline{Cyl}_{2})}.
 \end{equation} 
 Let $\delta_{0}$ be small enough such that $C\delta_{0}\leq \frac{1}{2}$ in the above, the Neumann series converges i.e.
 \begin{equation}\label{equ 2 in proof Lem regularity of S0}
 |(\Sigma_{j=0}^{\infty}[Q_{\beta,+}^{P^{0},2}(P^{0}-P)]^{j})\eta|_{\widehat{C}^{1,\alpha}_{-\underline{\beta},0}(\overline{Cyl}_{2})}\leq C|\eta|_{C^{1,\alpha}_{-\underline{\beta}}(\overline{Cyl}_{2})}.
 \end{equation}
Let $\eta=S^{P^{0},2}_{\beta,+}\xi$ in (\ref{equ 2 in proof Lem regularity of S0}),  Lemma \ref{lem regularity of S0} \textbf{I} follows from  (\ref{eqnarray 1 in proof Lem regularity of S0}) and Definition \ref{Def S0 and S}. 

 On \textbf{IV}, using the Schauder estimate (\ref{equation 1 proof of Prop Schauder invertibility in the non-indicial case})  and \textbf{II} (note $S^{P,2}_{\beta,+}\xi\in Ker P$), we obtain 
  \begin{equation}\label{equ 3 in proof Lem regularity of S0}
 |S^{P,2}_{\beta,+}\xi|_{C^{k,\alpha}(\frac{\epsilon S_{l}}{2})}\leq C|S^{P,2}_{\beta,+}\xi|_{L^{2}(\epsilon S_{l})}\leq Cl^{\frac{1}{2}+\epsilon} e^{\underline{\beta}l}|S^{P,2}_{\beta,+}\xi|_{L^{2}_{-\underline{\beta},-\frac{1}{2}-\epsilon}(\epsilon S_{l})}.
 \end{equation}
 By Definition \ref{Def Global graph norm and the beta vector}, the proof of \textbf{IV} is complete.
 \end{proof}
 
\begin{prop}\label{Prop regularity of Harmonic sections}Under the same conditions on $P$, $\epsilon$, $k$, $\gamma$, $\beta,\ \underline{\beta}$ in Lemma \ref{lem regularity of S0}, the following hold uniformly for any  $h\in KerP|_{\widehat{W}^{m_{0},2}_{-\beta,\gamma,-\frac{1}{2}+\epsilon}(Cyl_{t_{0}})}$. 
  $$\left\{\begin{array}{cc}
   |h|_{\widehat{C}^{k,\alpha}_{-\underline{\beta}, -\epsilon}(\overline{Cyl}_{t_{0}+\epsilon})}\leq C_{t_{0}}|h|_{\widehat{W}^{m_{0},2}_{-\beta,\gamma,-\frac{1}{2}+\epsilon}(Cyl_{t_{0}})}\ & \textrm{when}\ P\  \textrm{satisfies}\ \circledS_{\beta}|_{Cyl_{t_{0}}},\\
 |h|_{\widehat{C}^{k,\alpha}_{-\underline{\beta},0}(\overline{Cyl}_{t_{0}+\epsilon})}\leq C_{t_{0}}|h|_{\widehat{W}^{m_{0},2}_{-\beta,\gamma,-\frac{1}{2}+\epsilon}(Cyl_{t_{0}})}\ & \textrm{when}\ P\  \textrm{satisfies}\ \circledS (l)|_{Cyl_{t_{0}}}\ \textrm{with}\ l>1.\\
  \end{array}\right.$$ 
  Consequently, for any $k\leq k_{0}-2$, suppose the  signs of $\mu_{1}$ and $\mu_{2}$  are the same, we have
\begin{equation}(ker|Coker|Index)(\square|^{Sobolev,p}_{-\beta,\gamma,1-\frac{1}{p}+\mu_{1}})=(ker|Coker|Index)(\square|^{Schauder}_{-\beta,\gamma,1+\mu_{2}})\ \textrm{respectively}.
\end{equation}
Moreover, suppose $\underline{\beta}<\overline{\beta}$ are 2 adjacent indicial roots which are not $P^{0}-$super indicial, then
\begin{eqnarray*}& &(ker|Coker|Index)(\square|^{Sobolev,p}_{-\overline{\beta},\gamma_{1},1-\frac{1}{p}+\epsilon_{1}})=(ker|Coker|Index)(\square|^{Sobolev,p}_{-\beta,\gamma_{2},b})\\& &=(ker|Coker|Index)(\square|^{Sobolev,p}_{-\underline{\beta},\gamma_{3},1-\frac{1}{p}-\epsilon_{2}}).
\\& &(ker|Coker|Index)(\square|^{Schauder}_{-\overline{\beta},\gamma_{1},1+\epsilon_{1}})=(ker|Coker|Index)(\square|^{Schauder}_{-\beta,\gamma_{2},b})\\& &=(ker|Coker|Index)(\square|^{Schauder}_{-\underline{\beta},\gamma_{3},1-\epsilon_{2}}).
\end{eqnarray*}
 for any $p\geq 2$, $b$, $\gamma_{i}$ ($i=1,2,3$), $\beta\in (\underline{\beta},\overline{\beta})$, and $\epsilon_{1},\epsilon_{2}>0$.
\end{prop}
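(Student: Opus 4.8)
The plan is to prove Proposition \ref{Prop regularity of Harmonic sections} in three stages: first the pointwise regularity/decay statements for harmonic sections, then the identification of kernels and cokernels across the two scales of norms with shifted weight $b$, and finally the invariance of (kernel, cokernel, index) as $\beta$ ranges over an interval between adjacent indicial roots.

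\textbf{Step 1 (regularity of harmonic sections).} Given $h\in \Ker P|_{\widehat{W}^{m_{0},2}_{-\beta,\gamma,-\frac12+\epsilon}(Cyl_{t_{0}})}$, I would write $h = -(S^{P,t_{0}}_{\beta,+}h) + Q^{P,t_{0}}_{\beta,+}(Ph) = -S^{P,t_{0}}_{\beta,+}h$ using Definition \ref{Def S0 and S} and $Ph=0$. Then the two bounds are exactly Lemma \ref{lem regularity of S0} \textbf{IV} (for the $\circledS_{\beta}$ case, giving decay at rate $-\underline\beta - \epsilon$, hence the $-\epsilon$ exponent on $t$) together with a bootstrap: once $h\in C^{k,\alpha}_{-\underline\beta,-\frac12-\epsilon}(\overline{Cyl}_{t_{0}+\epsilon})$ it lies in $\widehat{C}^{k,\alpha}_{-\underline\beta,\gamma',\epsilon'}$ for a new small $\epsilon'$, so one reapplies Lemma \ref{lem regularity of S0} \textbf{III} with $\beta$ there taken to be $\underline\beta$ to reach the stated $-\epsilon$ Schauder norm; for the $\circledS(l)$, $l>1$ case one uses Lemma \ref{lem regularity of S0} \textbf{I} instead and lands in the norm with $b=0$. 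This is essentially a Neumann-series/self-improvement argument already packaged in Lemma \ref{lem regularity of S0}, so it is routine.

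\textbf{Step 2 (matching the two norm scales, fixed $\beta$).} The claim $(\Ker|\Coker|\Index)(\square|^{Sobolev,p}_{-\beta,\gamma,1-\frac1p+\mu_1}) = (\Ker|\Coker|\Index)(\square|^{Schauder}_{-\beta,\gamma,1+\mu_2})$ for $\mu_1,\mu_2$ of the same sign should follow because, by Step 1, any kernel element of either operator actually decays like $t^{-\underline\beta-\epsilon}$ (or faster, $e^{-\underline\beta t}$ weighted) regardless of which side of $b=1$ (resp.\ $1-\frac1p$) one is on — the value of $b$ in the domain only affects the $\Ker\{B_{P^0}-\cdots\}$-component, and harmonic sections have that component controlled by the ODE analysis. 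So the kernels literally coincide as subspaces of $L^2_{loc}$. For the cokernel, I would pass to the adjoint operator $P^\star$ (cf.\ the remark after Definition \ref{Def graph norm} and (\ref{equ relation ker and coker in L2 to weighted L2 proof of index thm})): the cokernel of $\square|_{-\beta,\gamma,b}$ is the kernel of $P^\star$ on the dual weighted space, whose weight is $\beta$ reflected through the appropriate point and whose $b$ is reflected through $1-\frac1p$ (resp.\ $1$), with $\Coker$ super-indicial condition excluded by hypothesis; then apply the same regularity argument to $P^\star$. The index statement follows formally from $\Index = \dim\Ker - \dim\Coker$ once both are shown invariant. Here I must also invoke that $\square|_{-\beta,\gamma,b}$ is genuinely Fredholm (Theorem \ref{Thm Fredholm in the cyl setting}), so the index is defined; the constraint $\beta$ not super-indicial is used throughout.

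\textbf{Step 3 (invariance for $\beta\in(\underline\beta,\overline\beta)$).} For $\beta$ strictly between two adjacent non-super-indicial indicial roots, $\beta$ itself is not indicial, so $\widehat{W}^{m_0,p}_{-\beta,\gamma,b-1} = W^{m_0,p}_{-\beta}$ and the polynomial weight $b$ is irrelevant (this is the displayed identification right after (\ref{eqnarray the theories in Thm Fredholm})); hence the middle terms in the chain do not depend on $b$ or $\gamma_2$. To identify these with the endpoint operators at $\overline\beta$ with $b=1-\frac1p+\epsilon_1$ and at $\underline\beta$ with $b=1-\frac1p-\epsilon_2$, the key is again Step 1: a kernel element at weight $-\overline\beta$ (with $b>1-\frac1p$, i.e.\ forced to be in the ``$\widehat Q_+$'' regime) decays strictly faster than $e^{-\overline\beta t}$, in fact at rate governed by the next indicial root below $\overline\beta$, which is $\beta$ (or below) — so it lies in $W^{m_0,p}_{-\beta}$; conversely an element of the kernel at weight $-\beta$ automatically decays faster than the indicial root just above it, putting it in the $\widehat Q_+$-refined space at $\overline\beta$. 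Symmetrically for $\underline\beta$ with $b<1-\frac1p$ using the ``$\widehat Q_-$'' regime. Cokernels are handled by the same adjoint trick. Combined with Step 2 (which bridges Sobolev and Schauder at matched $b$), all six quantities in each displayed chain coincide.

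The main obstacle I anticipate is Step 2's cokernel identification: one must be careful that the duality pairing between $\square|^{Sobolev,p}_{-\beta,\gamma,b}$ and its adjoint really does send the polynomial weight exponent $b$ to $1-\frac1p - b$ (not $1-b$) and that the $\sigma_2$ twist of $P^\star$, as flagged in the remark after Definition \ref{Def graph norm}, does not alter $\Ker$ or $\Coker$ — so that the super-indicial exclusion passes correctly to the adjoint side and Step 1's regularity applies verbatim to $P^\star$. Everything else is bookkeeping with the spectral decomposition and the already-established estimates of Lemma \ref{lem regularity of S0}.
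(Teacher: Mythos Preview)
Your overall strategy matches the paper's: use $h=-S^{P,t}_{\beta,+}h$ for $h\in\Ker P$, apply Lemma \ref{lem regularity of S0} \textbf{IV} first, then bootstrap via \textbf{III} (or \textbf{I} in the $\circledS(l)$ case). However, your bootstrap in Step 1 is misstated and would fail as written. Having $h\in C^{k,\alpha}_{-\underline\beta,-\frac12-\epsilon}$ does \emph{not} place $h$ in $\widehat{C}^{k,\alpha}_{-\underline\beta,\gamma',\epsilon'}$: you cannot raise the polynomial exponent from $-\frac12-\epsilon$ to $\epsilon'>0$ while keeping the same exponential weight $\underline\beta$. Nor can you reapply Lemma \ref{lem regularity of S0} \textbf{III} ``with $\beta$ taken to be $\underline\beta$'', since that would require $P$ to satisfy $\circledS_{\underline\beta}$, which is not among the hypotheses. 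The correct move---and this is exactly what the paper does---is to trade the improved exponential weight for the polynomial one: since $\underline\beta<\beta$, one has $C^{k,\alpha}_{-\underline\beta,-\frac12-\epsilon}\subset \widehat{C}^{k,\alpha}_{-\beta,\gamma,\epsilon}$ (the exponential gain $e^{(\beta-\underline\beta)t}$ swallows any polynomial), and then reapply Lemma \ref{lem regularity of S0} \textbf{III} (resp.\ \textbf{I}) at the \emph{original} $\beta$. This outputs the claimed $\widehat{C}^{k,\alpha}_{-\underline\beta,-\epsilon}$ (resp.\ $\widehat{C}^{k,\alpha}_{-\underline\beta,0}$) bound directly, with no need to shift the base weight.

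Your Steps 2 and 3 are fine; the paper in fact leaves these as implicit consequences of the regularity estimates and offers no further argument, so your adjoint treatment of the cokernel and the observation that for non-indicial $\beta$ the $\widehat{W}$-space coincides with the ordinary weighted space are the right ingredients.
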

\noindent \textit{Proof}:  It's a direct corollary of Lemma \ref{lem regularity of S0}. We only prove the second assertion, the first is easier.  We adopt Remark \ref{rmk convention regularity reduction}. We note that $h\in Ker P$ implies 
\begin{equation}\label{equ 1 proof of Prop regularity of Harmonic sections}
h=-S^{P,t}_{\beta,+}h\ \textrm{for all}\ t\geq t_{0}.
\end{equation}
Let $t=t_{0}$, Lemma \ref{lem regularity of S0} \textbf{(IV)} says $|h|_{\widehat{C}^{k,\alpha}_{-\underline{\beta},-2}(\overline{Cyl}_{t_{0}+\epsilon})}\leq C|h|_{\widehat{W}^{m_{0},2}_{-\beta,-\frac{1}{2}+\epsilon}(Cyl_{t_{0}})}$. Let $t=t_{0}+\epsilon$ in (\ref{equ 1 proof of Prop regularity of Harmonic sections}), Lemma \ref{lem regularity of S0} \textbf{(I)} and the above  imply
$$|h|_{\widehat{C}^{k,\alpha}_{-\underline{\beta},0}(\overline{Cyl}_{t_{0}+\epsilon})}\leq C|h|_{\widehat{C}^{k,\alpha}_{-\underline{\beta},-2}(\overline{Cyl}_{t_{0}+\epsilon})}\leq C|h|_{\widehat{W}^{m_{0},2}_{-\beta,-\frac{1}{2}+\epsilon}(Cyl_{t_{0}})}.$$
\begin{proof}[\textbf{Proof of Theorem \ref{Thm Fredholm in the cyl setting}, \ref{Thm Fredholm  different weight on different ends}}:] With the help of Lemma  \ref{lem regularity of S0}, it is standard. We only do the argument for \ref{Thm Fredholm in the cyl setting},  and only show   the $\widehat{W}$ ($\widehat{C}$) theory for $b>1-\frac{1}{p}$ ($b>1$), respectively.

 By  \cite[(2.5) to line 2, page 421]{Lockhart} (also see \cite[Theorem 4.14]{Myself2016bigpaper}), piecing together the local right inverses in Theorem \ref{Thm local invertibility for ATID} on the ends, we obtain a global parametrix  $Q_{\beta,+,N}$ such that 
 \begin{equation}
 \left\{\begin{array}{c}Q_{\beta,+,N}P=Id+S_{left},\\
  PQ_{\beta,+,N}=Id+S_{right}\end{array}\right. ,\ S_{right}\ \textrm{is compact from}\ W^{k,p}_{-\beta,\gamma,b}\ (C^{k,\alpha}_{-\beta,\gamma,b})\ \textrm{to itself}.
 \end{equation}
Lemma \ref{lem regularity of S0} \textbf{IV} says $S_{left}$ is bounded from $\widehat{W}^{m_{0},2}_{-\beta,\gamma,b-1}$ to $\widehat{C}^{k_{0}+m_{0}-1,\alpha}_{-\underline{\beta},-1}$. Since the weight is improved, by the (cylindrical analogue of)   \cite[proof of Lemma 4.10]{Myself2016bigpaper}, $\widehat{C}^{k_{0}+m_{0}-1,\alpha}_{-\underline{\beta},-1}$ embeds compactly into  $\widehat{W}^{k+m_{0},p}_{-\beta,\gamma,b-1}$ and $\widehat{C}^{k+m_{0},\alpha}_{-\beta,\gamma,b-1}$ when $k\leq k_{0}-2$. Then $S_{left}$ is compact from $\widehat{W}^{k+m_{0},p}_{-\beta,\gamma,b-1}$ $(\widehat{C}^{k+m_{0},\alpha}_{-\beta,\gamma,b-1})$ to itself. Hence  \cite[Theorem 4.6.5]{Zhanggongqing} implies $P$ is Fredholm. \end{proof}

\section{Index \label{section index}}
 
\begin{Def} Let $B$ be an operator as in Proposition \ref{prop eta}. Let  $d_{\lambda}\triangleq dim ker(B-\lambda Id)$. Let $Eta(B)$ denote the eta-invariant defined in \cite[Theorem 3.10 (iii)]{APS}, and 
\begin{equation*}H_{\beta,B} \triangleq \left\{\begin{array}{cc} -\frac{d_{0}}{2}-\Sigma_{0>\lambda\geq \beta} d_{\lambda} & \textrm{when}\ \beta<0,\\
-\frac{d_{0}}{2} & \textrm{when}\ \beta=0,\\
\frac{d_{0}}{2}+\Sigma_{0<\lambda<\beta} d_{\lambda} & \textrm{when}\ \beta>0.\\
 \end{array}\right\}, \ H_{\beta,B,b} \triangleq \left\{\begin{array}{cc} H_{\beta,B} & \textrm{when}\ b>\frac{1}{2},\\
H_{\beta,B}+d_{\beta} & \textrm{when}\ b<\frac{1}{2}.\\
\end{array}\right. 
\end{equation*}
 
\end{Def}
\begin{thm}\label{Thm general index}  Under the  conditions in   Theorem \ref{Thm Fredholm  different weight on different ends}, suppose  in addition that  $P$  is  first-order elliptic, $\sigma_{1}$ in (\ref{equ Def P0 TID}) is an isometry, and $P$ is translation invariant on each end. Then
\begin{equation}\label{equ thm index} index P |_{\widehat{W}^{1,2}_{-\overrightarrow{\beta},\overrightarrow{b}-1}\rightarrow L^{2}_{-\overrightarrow{\beta},\overrightarrow{b}}}=\int_{X}\alpha_{0}dvol-\frac{Eta(B_{P^{0}})}{2}+\Sigma_{j=1}^{l_{0}} H_{\beta_{j},B_{P^{0,j}}^{j},b_{j}},\ \textrm{where}\end{equation}
$\alpha_{0}$ is the $0-th$ order term in the expansion of the kernel of  $e^{-t\widetilde{P}^{\star}\widetilde{P}}-e^{-t\widetilde{P} \widetilde{P}^{\star}}$, $\widetilde{P}$ is the double of $P$ on the double of $N_{0}$ ($N_{0}\sharp\bar{N}_{0}$).
\end{thm}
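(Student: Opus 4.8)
The plan is to reduce (\ref{equ thm index}) to a single convenient weight and then to compute the index there by a McKean--Singer argument on the closed double $X=N_{0}\sharp\overline{N_{0}}$, with $Eta(B_{P^{0}})$ entering as the boundary defect of a gluing formula. For the reduction: only $\sum_{j}H_{\beta_{j},B^{j}_{P^{0,j}},b_{j}}$ on the right of (\ref{equ thm index}) depends on $(\overrightarrow{\beta},\overrightarrow{b})$, while by Proposition~\ref{Prop regularity of Harmonic sections} the kernel, cokernel and index of the Sobolev realization $P|_{\widehat W^{1,2}_{-\overrightarrow{\beta},\overrightarrow{b}-1}\to L^{2}_{-\overrightarrow{\beta},\overrightarrow{b}}}$ are constant as each $\beta_{j}$ moves inside a chamber cut out by the indicial roots (with $b_{j}$ fixed on one side of $\tfrac12$), and change only when some $\beta_{j}$ crosses an indicial root $\Lambda\in\mathrm{Spec}\,B_{P^{0,j}}$ (Remark~\ref{Rmk ker coker index do not change}). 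A direct computation with the homogeneous model solutions $u_{\lambda}=e^{\lambda t}\phi_{\lambda}$ --- which lie in $L^{2}$ of an end exactly when $\lambda<0$ and are borderline when $\lambda=0$ --- shows the cokernel changes by exactly $d_{\Lambda}=\dim\ker(B_{P^{0,j}}-\Lambda\,\mathrm{Id})$ at such a crossing, while the threshold $b_{j}\gtrless\tfrac12$ toggles the borderline mode $\phi_{\beta_{j}}$; these are precisely the jumps built into $H_{\beta,B,b}$, and $H_{0,B,b}=-d_{0}/2$ for $b>\tfrac12$. Hence it suffices to prove (\ref{equ thm index}) at $\beta_{j}=0$, $b_{j}>\tfrac12$, where it reads
\[
\mathrm{index}\,P|_{\widehat W^{1,2}_{0,b-1}\to L^{2}_{0,b}}=\int_{X}\alpha_{0}\,dvol-\tfrac12 Eta(B_{P^{0}})-\sum_{j}\tfrac12 d_{0}^{j}.
\]

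Next I would set up the double. Let $N_{0}$ be $N$ with its cylindrical ends truncated at some $t_{0}$, a compact manifold with boundary $Y=\sqcup_{j}Y_{j}$; form the closed double $X=N_{0}\sharp\overline{N_{0}}$ and the doubled operator $\widetilde P$. Since $P=\sigma_{1}(-B_{P^{0,j}}-\partial_{t})\sigma_{2}$ is translation invariant near each $Y_{j}$ and $\sigma_{1}$ is an isometry, the usual Atiyah--Patodi--Singer collar reflection produces a smooth first-order elliptic $\widetilde P$ on the closed $X$. On a closed manifold the McKean--Singer identity gives $\mathrm{Tr}(e^{-t\widetilde P^{\star}\widetilde P})-\mathrm{Tr}(e^{-t\widetilde P\widetilde P^{\star}})=\mathrm{index}\,\widetilde P$ for every $t>0$; letting $t\to0^{+}$ and reading off the $t^{0}$ term of the pointwise heat asymptotics yields $\int_{X}\alpha_{0}\,dvol=\mathrm{index}\,\widetilde P\in\Z$. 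On any translation-invariant product region $Y\times(-\infty,\infty)\subset X$ one has $\widetilde P^{\star}\widetilde P=\widetilde P\widetilde P^{\star}$, so $\alpha_{0}$ vanishes there; in particular $\int_{X}\alpha_{0}$ does not depend on $t_{0}$ and is unchanged when the neck of $X$ is stretched.

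Then I would split $\mathrm{index}\,\widetilde P$. Stretching the neck of $X$ to $Y\times[-L,L]$ and letting $L\to\infty$ keeps $\mathrm{index}\,\widetilde P$ fixed (homotopy invariance) while $X_{L}$ degenerates into $N\sqcup\overline N$, giving a splitting
\[
\mathrm{index}\,\widetilde P=\mathrm{index}_{L^{2}}(P\text{ on }N)+\mathrm{index}_{L^{2}}(\overline P\text{ on }\overline N)+(\text{neck defect}),
\]
where the neck defect gathers the contribution of the homogeneous cylinder modes $e^{\lambda t}\phi_{\lambda}$, $\lambda\in\mathrm{Spec}\,B_{P^{0}}$. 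Analytically this is the parametrix gluing of Section~\ref{section Regularity and Fredholm} (piecing together the local inverses $Q^{P,t_{0}}_{\beta,\pm}$ of Theorem~\ref{Thm local invertibility for ATID}) run in reverse, now retaining the finite-dimensional corrections; the bounds of Theorem~\ref{Thm local invertibility for ATID} being uniform in $t_{0}$ is exactly what makes the $L\to\infty$ limit legitimate. Finally $\overline N$ is $N$ with the cylindrical variable reversed, so up to composition with invertible bundle maps --- which by the remark following Definition~\ref{Def graph norm} affect neither kernel nor index --- $\overline P$ is the formal adjoint $P^{\star}$, whence $\mathrm{index}_{L^{2}}(\overline P\text{ on }\overline N)=-\mathrm{index}_{L^{2}}(P\text{ on }N)$ up to the symmetric split of the modes straddling weight $0$. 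The neck defect together with this $\overline N$-term then evaluates to the spectral asymmetry of $B_{P^{0}}$, namely $\tfrac12 Eta(B_{P^{0}})$ for its transcendental part (the eta-invariant of \cite[Theorem 3.10(iii)]{APS}) plus $\tfrac12\sum_{j}d_{0}^{j}$ from the degenerate $\lambda=0$ modes; combining $\int_{X}\alpha_{0}\,dvol=\mathrm{index}\,\widetilde P$ with this splitting recovers the reduced form displayed above, and hence (\ref{equ thm index}).

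I expect the main obstacle to be the index splitting itself: showing that, as the neck lengthens, no index leaks into the cylinder except through the explicitly identified modes $e^{\lambda t}\phi_{\lambda}$, which amounts to controlling the $L^{2}$-cokernel of the glued neck parametrix uniformly in $L$ (this is what the uniform-in-$t_{0}$ estimates of Theorem~\ref{Thm local invertibility for ATID} are for), and then matching the resulting boundary term with the eta-invariant in the exact normalization carried by $H_{\beta,B}$. The half-integer contributions of the $\lambda=0$ modes on the two halves must be tracked carefully so that, together with the spectral asymmetry of $B_{P^{0}}$, they assemble into the integer $\mathrm{index}\,\widetilde P$; this bookkeeping and its compatibility with \cite[Theorem 3.10(iii)]{APS} is in essence the auxiliary Proposition~\ref{prop eta}. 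A secondary remark, harmless since $P$ need not be a classical Dirac operator, is that the McKean--Singer step uses only ellipticity of $\widetilde P$ on a closed manifold and $\alpha_{0}$ is by definition the $t^{0}$ heat coefficient, so no identification of $\alpha_{0}$ with a characteristic form is required.
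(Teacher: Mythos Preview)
Your reduction to the base case $\overrightarrow{\beta}=0$, $b_{j}>\tfrac12$ via Proposition~\ref{Prop regularity of Harmonic sections} and the wall-crossing behaviour of $H_{\beta,B,b}$ matches the paper's strategy; the paper phrases the jump as a citation of Lockhart--McOwen \cite[Theorem~8.1]{Lockhart}, which is proved by exactly the homogeneous-solution count you sketch.

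The divergence is at the base case. The paper does not establish the formula at $\overrightarrow{\beta}=0$ from scratch: it uses Claim~\ref{clm boostrapping kernel of model operators} (and duality $L^{2}_{0,b}\cong (L^{2}_{0,-b})^{\ast}$) to identify the kernel and cokernel of $P|_{\widehat W^{1,2}_{0,b-1}\to L^{2}_{0,b}}$ with the ordinary and extended $L^{2}$-harmonic spaces of Atiyah--Patodi--Singer, and then simply \emph{quotes} \cite[Corollary~(3.14), (3.25)]{APS}, where $\int_{X}\alpha_{0}\,dvol$, $Eta(B_{P^{0}})$ and the $d_{0}$-terms already appear assembled. Your proposal instead attempts to re-derive the APS formula by stretching the neck of the double $X$ and reading the eta-invariant off as a gluing defect. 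That is a legitimate route to APS (it is essentially the analytic-surgery proof), but it is an order of magnitude more work than the citation, and the step you yourself flag as the ``main obstacle''---that the neck defect equals $\tfrac12 Eta(B_{P^{0}})+\tfrac12\sum_{j}d_{0}^{j}$ in the correct normalization---is the entire analytic content of \cite{APS}, not a consequence of the uniform parametrix bounds in Theorem~\ref{Thm local invertibility for ATID}. So your outline is not wrong, but it leaves unproved precisely the piece the paper outsources to \cite{APS}; once you observe that $\widehat W^{1,2}_{0,b-1}$ for $b\gtrless\tfrac12$ realizes exactly the APS ordinary/extended $L^{2}$ boundary conditions, quoting \cite[Corollary~(3.14), (3.25)]{APS} is both shorter and complete.
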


\begin{rmk}\label{rmk all the index can be computed} When $P$ satisfies proper conditions as  Definition \ref{Def ATID},  under the operator norm,  we can usually  deform it continuously to be translation-invariant on each end. Thus the index of $P$ can still be computed by deformation invariance.  
\end{rmk}
\begin{proof}[Proof of Theorem \ref{Thm general index}:]Without loss of generality, we assume $\sigma_{2}=Id$.  We recall the "Extended $L^{2}-$sections" defined in  \cite[first paragraph of page 58]{APS}, and note that the dual of $L^{2}_{0,b}$ is isomorphic to $L^{2}_{0,-b}$.   Using Claim \ref{clm boostrapping kernel of model operators} for both $L$ and $L^{\star}=-(\frac{\partial}{\partial t}+B)\sigma^{-1}_{1}$, we have  
\begin{equation}\label{equ relation ker and coker in L2 to weighted L2 proof of index thm}
\begin{array}{c}Ker L|_{\widehat{W}^{1,2}_{0,b-1}}=Ker L|_{L^{2}(N,E)},\ Ker L^{\star}|_{L^{2}_{0,-b}}=Ker L^{\star}|_{\textrm{Extended}\ L^{2}(N,F)}\ \textrm{when}\ b>\frac{1}{2};\\
Ker L|_{\widehat{W}^{1,2}_{0,b-1}}=Ker L|_{\textrm{Extended}\ L^{2}(N,E)},\ Ker L^{\star}|_{L^{2}_{0,-b}}=Ker L^{\star}|_{ L^{2}(N,F)}\ \textrm{when}\ b<\frac{1}{2}.  
\end{array}
\end{equation}
\begin{equation}\label{equ 1 proof of index thm}
Index L|_{\widehat{W}^{1,2}_{0,b-1}\rightarrow L^{2}_{0,b}}=\left\{ \begin{array}{c}h(E)-h(F)-h_{\infty}(F) \ \textrm{when}\ b>\frac{1}{2};\\
 h(E)-h(F)+h_{\infty}(E) \ \textrm{when}\ b<\frac{1}{2},  
\end{array}\right.
\end{equation}
where $h(E),\ h(F),\ h_{\infty}(F),\ h_{\infty}(F)$ are defined in \cite[Corollary (3.14)]{APS}. 
Assuming $\overrightarrow{\beta}=0$ for all $j$, (\ref{equ thm index}) follows from  \cite[Corollary (3.14), (3.25)]{APS}. 

By Proposition \ref{Prop regularity of Harmonic sections} (c.f. Remark \ref{Rmk ker coker index do not change}),
 \begin{equation}\label{equ 2 proof of index thm}
Index (P|^{Sobolev,2}_{-\beta,b})=\left\{ \begin{array}{c}Index (P|^{Sobolev,2}_{-\beta+\epsilon}) \ \textrm{when}\ b>\frac{1}{2};\\
 Index (P|^{Sobolev,2}_{-\beta-\epsilon}) \ \textrm{when}\ b<\frac{1}{2}. 
\end{array}\right. \epsilon>0\ \textrm{and  small}.
\end{equation}

  The   index change formula of Lockhart-McOwen \cite[Theorem 8.1]{Lockhart} means for any $i_{0}$,  let $\beta_{i},\ i\neq i_{0}$ be unaltered, and $\beta_{i_{0}}$ go across an eigen-value $\lambda$ of $B_{i_{0}}$ (from $\lambda+\epsilon$ to $\lambda-\epsilon$), the index decreases by $dimker(B^{i_{0}}_{P^{0}}-\lambda Id)$. The proof  for general $\overrightarrow{\beta}$  is   complete with the help of   (\ref{equ 2 proof of index thm}) [and (\ref{equ thm index}) for $\overrightarrow{\beta}=0$].\end{proof}

\begin{prop}\label{prop eta}In the setting of Definition \ref{Def TID operators},  suppose $B$ is a self-adjoint first-order elliptic differential operator on $E\rightarrow Y$. Then $Eta(B-\beta Id)=Eta(B)-2H_{\beta,B}-d_{\beta}$.
\end{prop}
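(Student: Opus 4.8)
The plan is to deduce this from the index formula in Theorem \ref{Thm general index} applied to a carefully chosen model operator. The key observation is that $\eta$-invariants of $B$ and $B - \beta\,\mathrm{Id}$ measure spectral asymmetry, and the difference between them is exactly accounted for by the finitely many eigenvalues of $B$ that lie between $0$ and $\beta$ (counted with multiplicity), plus a contribution from the eigenvalue $0$ (resp. $\beta$). This is precisely the combinatorial quantity $H_{\beta,B}$ (resp. the half-multiplicity corrections appearing in $H_{\beta,B,b}$). So morally the identity is a bookkeeping statement about how many eigenvalues cross the origin when one shifts the operator by $\beta$; the task is to make this rigorous via an index computation rather than a direct spectral zeta-function manipulation.

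First I would set up the test case: take $N$ to be the manifold with one cylindrical end, say $N_0 = Y \times [0,\infty)$ capped off, or more simply consider the translation-invariant first-order operator $P^0 = \sigma_1(-B - \frac{\partial}{\partial t})$ on the cylinder and form a closed-up manifold $N$ whose single end carries $P^0$ as its limit TID operator, with $\sigma_1$ an isometry and $\sigma_2 = \mathrm{Id}$. Apply Theorem \ref{Thm general index} to $P$ with weight vector $\overrightarrow{\beta} = 0$ (single end): this gives
\[
\mathrm{index}\,P|_{\widehat{W}^{1,2}_{0,b-1}\to L^2_{0,b}} = \int_X \alpha_0\,d\mathrm{vol} - \frac{Eta(B_{P^0})}{2} + H_{0,B_{P^0},b}.
\]
Now apply the \emph{same} theorem to the \emph{same} underlying geometric operator $P$ but regarded through the lens of the shifted model: conjugating by $e^{\beta t}$ turns the weight-$(-\beta)$ theory into a weight-$0$ theory for an operator whose link operator is $B - \beta\,\mathrm{Id}$ (this is exactly the relation between $B_{P^0}$ and $B_{P^0_\beta}$ recorded in the displayed formula for $B_{P^0_\beta}$ in Section \ref{section preparation}, with the first-order branch $B_{P^0} - \beta\,\mathrm{Id}$). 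Thus
\[
\mathrm{index}\,P|_{\widehat{W}^{1,2}_{-\beta,b-1}\to L^2_{-\beta,b}} = \int_X \alpha_0\,d\mathrm{vol} - \frac{Eta(B - \beta\,\mathrm{Id})}{2} + H_{0,B-\beta\,\mathrm{Id},b}.
\]
Here I use that the interior contribution $\int_X \alpha_0\,d\mathrm{vol}$ is unchanged: it is a local quantity built from the heat kernel on the double of the compact core $N_0$, which does not see the weight.

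Next I would equate the two index expressions using the index-change formula of Lockhart–McOwen already invoked in the proof of Theorem \ref{Thm general index}: as the weight moves from $0$ down to $-\beta$ (for $\beta > 0$, say), the index jumps by $-\dim\ker(B_{P^0} - \lambda\,\mathrm{Id})$ each time $\beta$ crosses an eigenvalue $\lambda \in (0,\beta)$, and there is a half-jump at the endpoints governed by the $b$-dependence. Carefully, the difference
\[
\mathrm{index}\,P|_{\widehat{W}^{1,2}_{0,b-1}} - \mathrm{index}\,P|_{\widehat{W}^{1,2}_{-\beta,b-1}} = \Sigma_{0<\lambda<\beta} d_\lambda + (\text{endpoint terms}) = H_{0,B,b} - H_{\beta,B,b} + (\text{correction}),
\]
which after substituting the two boxed formulas and cancelling $\int_X \alpha_0$ yields a relation purely among $Eta(B)$, $Eta(B-\beta\,\mathrm{Id})$, and the $H$-quantities. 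Solving for $Eta(B-\beta\,\mathrm{Id})$ should produce $Eta(B) - 2H_{\beta,B} - d_\beta$; the $d_\beta$ term arises because $H_{0,B-\beta\,\mathrm{Id},b}$ differs from $H_{0,B,b}$-type bookkeeping precisely by the eigenvalue of $B-\beta\,\mathrm{Id}$ sitting at $0$, i.e.\ the eigenvalue of $B$ sitting at $\beta$. I would pin down the sign conventions by testing $\beta$ slightly positive versus slightly negative and checking the $b > \tfrac12$ versus $b < \tfrac12$ cases against the definitions of $H_{\beta,B}$ and $H_{\beta,B,b}$.

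The main obstacle I expect is the careful tracking of half-integer multiplicities and sign conventions at the endpoints — specifically reconciling the APS convention for $Eta$ (where the $\lambda = 0$ mode contributes $d_0/2$ with a definite sign) with the $b$-dependent splitting in $H_{\beta,B,b}$ and with the Lockhart–McOwen jump at the boundary weight. One must ensure that the formula is independent of the auxiliary choices ($N$, $b$, the sign of $\epsilon$ in the deformation), which it must be since the left side $Eta(B - \beta\,\mathrm{Id})$ is intrinsic; checking this independence is a good internal consistency test. A secondary point is verifying that $B - \beta\,\mathrm{Id}$ is again an admissible link operator for a TID operator so that Theorem \ref{Thm general index} genuinely applies to the shifted model — but since shifting a self-adjoint first-order elliptic operator by a real constant preserves self-adjointness and ellipticity, this is immediate. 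An alternative, purely spectral proof via the integral representation $Eta(B)=\frac{1}{\sqrt\pi}\int_0^\infty s^{-1/2}\mathrm{Tr}(Be^{-sB^2})\,ds$ and analytic continuation is possible, but the index-theoretic route reuses machinery already in the paper and avoids convergence subtleties, so I would pursue it.
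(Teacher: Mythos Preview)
Your overall strategy coincides with the paper's: apply Theorem \ref{Thm general index} twice---once to an operator whose link is $B$, once (after conjugating by a function $\rho$ equal to $e^{\beta t}$ on the end) to the operator whose link is $B-\beta\,\mathrm{Id}$---and equate, using that conjugation preserves the Fredholm index. The paper, however, chooses a different test manifold: the \emph{full cylinder} $Y\times\mathbb{R}$, regarded as a manifold with two ends. On $End_{+}$ the link is $B$ and one imposes weight $\beta$; on $End_{-}$ (coordinate $s=-t$) the link is $-B$ with weight $0$. Conjugation by $\rho$ (equal to $e^{\beta t}$ on $End_{+}$, $1$ on $End_{-}$) shifts only the positive link to $B-\beta\,\mathrm{Id}$; the $Eta(-B)$ and $H_{0,-B}$ terms coming from the negative end are then identical in the two index formulas and cancel, yielding the identity directly without a separate appeal to the Lockhart--McOwen jump.

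Your one-ended ``capped-off'' $N$ has a genuine gap: such a manifold exists only if $Y$ bounds a compact manifold over which the bundle $E$ and an elliptic extension of the operator can be defined, and for general $(Y,E,B)$ there is no reason this should hold. The full cylinder sidesteps the problem entirely---$\partial_{t}-B$ is globally defined with no filling needed---which is precisely why the paper uses it. Two smaller points: (i) your reason for $\int_{X}\alpha_{0}$ being unchanged (``does not see the weight'') misses the issue, since after conjugation the \emph{operator} has changed; the correct argument (left implicit in the paper too) is that $\int_{X}\alpha_{0}$ equals the index of the doubled operator on a closed manifold, an integer preserved under the continuous deformation $P_{s}=\rho^{-s}P\rho^{s}$; (ii) invoking Lockhart--McOwen for the index difference is roundabout---you can simply apply Theorem \ref{Thm general index} to $P$ at weight $-\beta$ directly and equate with your second formula, exactly as the paper does with its two displayed equations.
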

\begin{proof} We form the full cylinder $Y\times (-\infty,+\infty)_{t}$  and consider $P=\frac{\partial}{\partial t}-B$. We consider the  open cover $End_{+}$, $Y\times (-4,4)$, $End_{-}$, where $End_{+}=Cyl_{3}$ and $End_{-}=Y\times (-\infty,-3)$. In $End_{-}$, under the coordinate $s=-t$, $P=-\frac{\partial}{\partial s}-B=-[\frac{\partial}{\partial s}-(-B)]$.  Let  $-\overrightarrow{\beta}=(-\beta,0)$ ($e^{-\beta t}$ on $End_{+}$ and $1$ on $End_{-}$), Theorem \ref{Thm general index} says when $b>\frac{1}{2}$ that
\begin{equation}\label{equ 1 proof prop eta invariant}index P |_{\widehat{W}^{1,2}_{-\overrightarrow{\beta},b-1}\rightarrow L^{2}_{-\overrightarrow{\beta},b}}=\int_{X}\alpha_{0}dvol-\frac{Eta(B)}{2}-\frac{Eta(-B)}{2}-\frac{d_{0}}{2}+ H_{\beta,B}.\end{equation}

On the other hand, let $\rho$ be a smooth function which is equal to  $e^{\beta t}$ on $End_{+}$, and $1$ on $End_{-}$, the conjugation $P_{\rho}=\frac{1}{\rho}\cdot P\cdot\rho: \widehat{W}^{1,2}_{0,b-1}\rightarrow L^{2}_{0,b}$ has the same index as $P$. Noting 
$$P_{\rho}=\left\{\begin{array}{cc} \frac{\partial}{\partial t}-(B-\beta Id) & \textrm{on}\ End_{+}\\ 
 P& \textrm{on}\ End_{-},
\end{array}\right.$$
then Theorem \ref{Thm general index} says the following for $P_{\rho}$. 
\begin{equation}\label{equ 2 proof prop eta invariant}index P_{\rho} |_{\widehat{W}^{1,2}_{0,b-1}\rightarrow L^{2}_{0,b}}=\int_{X}\alpha_{0}dvol-\frac{Eta(B-\beta Id)}{2}-\frac{Eta(-B)}{2}-\frac{d_{\beta}}{2}-\frac{d_{0}}{2}.\end{equation}
The equality between (\ref{equ 1 proof prop eta invariant}) and (\ref{equ 2 proof prop eta invariant})
yields the desired identity.
\end{proof}
\section{Applications \label{section Applications}}\begin{Def}\label{Def convergence} Let $\Gamma$ be a section or connection of $E\rightarrow Cyl_{0}$. Suppose $\Gamma$ satisfies an elliptic equation of order $m_{0}$. For any $C_{0}>0$, $\tau\geq 0$, and real number $b$, we say that $\Gamma$ converges to $\Gamma_{0}$ (at least) at the rate $\frac{C_{0}e^{-\tau t}}{t^{b}}$ [or $O(\frac{e^{-\tau t}}{t^{b}})$], if $\Gamma_{0}$ is a section or connection  of $E\rightarrow Y$ respectively and $\limsup_{t\rightarrow \infty}\frac{e^{-\tau t}}{t^{b}}|\Gamma-\Gamma_{0}|_{C^{m_{0},\alpha}(S_{t})}< C_{0}\ (\textrm{or}\ <\infty)\ \textrm{respectively}$.
 
 When $\tau>0$, We say that the convergence is exponential. When $\tau=0$, We say that it's polynomial. We only prove \ref{Cor regularity Yang Mills} in full detail, \ref{Cor Minimal surfaces} follows similarly. 
 \end{Def}
\subsection{Yang-Mills connections: proof for Corollary \ref{Cor regularity Yang Mills} \label{section Yang Mills application}}
Denoting $A-A_{O}$ by $a$, the YM-equation is $0=d_{A}^{\star}F_{A}=d_{A_{O}+a}^{\star}(d_{A_{O}}a+F_{A_{O}}+\frac{1}{2}[a,a])$. 
\begin{equation} \label{equ Yang Mills equation}
\textrm{Thus}\ \Delta_{A_{O},Hodge}a+(-1)^{n+1}\star[a,\star F_{A_{O}}]=Q_{YM}(a)-d_{A_{O}}^{\star}F_{A_{O}}\ \textrm{assuming}\ d^{\star}_{A_{O}}a=0,
\end{equation}
where $\Delta_{A_{O},Hodge}=d^{\star}_{A_{O}}d_{A_{O}}+d_{A_{O}}d^{\star}_{A_{O}}$  is the Hodge Laplacian, and 
\begin{equation}Q_{YM}(a)=-\frac{d_{A_{O}}^{\star}[a,a]}{2}+(-1)^{n}\star[a,\star d_{A_{O}}a ]+\frac{(-1)^{n}}{2}\star[a,\star [a,a] ].\end{equation} 
We note $Q_{YM}$  is quadratic in $a$.  We routinely verify  
\begin{equation}\nonumber(-1)^{n+1}\star[a,\star F_{A_{O}}]=F_{A_{O}}\underline{\otimes}a\ (\cite[Definition    \ 3.2]{Myself2016bigpaper}),\ \Delta_{A_{O},Hodge,g_{E}}=\nabla^{\star_{E}}_{A_{O}}\nabla_{A_{O}}+ F_{A_{O}}\underline{\otimes}_{g_{E}}a. \end{equation} 
Then  in cylindrical coordinates ($r=e^{-t}$),  let $a=vdt+\theta$ ($\theta$ does not contain $dt$), we find \begin{eqnarray}& &\Delta_{A_{O},Hodge,E}a+(-1)^{n+1}\star_{E}[a,\star_{E} F_{A_{O}}] \nonumber= \nabla^{\star_{E}}_{A_{O}}\nabla_{A_{O}}+2 F_{A_{O}}\underline{\otimes}_{g_{E}}a
\\&=&-e^{2t}\left|\begin{array}{cc} dt &0  \\ 0 & Id 
 \end{array}\right| \{\frac{\partial^{2}}{\partial t^{2}}-(n-4)\frac{\partial}{\partial t}-B\}\left[\begin{array}{c}v \\ \theta  \end{array}\right].\ \ \textrm{Let}\ Y=\mathbb{S}^{n-1},\ B\ \textrm{is}\label{equ 1 proof Yang Mills Cor}
 \\&B\left[\begin{array}{c}v \\ \theta  \end{array}\right]&\triangleq \left[\begin{array}{c}\nabla^{\star_{Y}}\nabla v-2d^{\star}b-2(n-2)v\\ \nabla^{\star_{Y}}\nabla \theta-2dv-2F_{A_{O}}\underline{\otimes}_{Y}\theta-(n-2) \theta  \end{array}\right]\ (\textrm{c.f.}\ \cite[(22), (24)]{Myself2016bigpaper}). \label{equ 2 proof Yang Mills Cor}
\end{eqnarray}
  As an usual strategy for non-linear equations, we view $Q_{YM}$ as a linear operator defining 
\begin{equation}\label{equ 3 proof Yang Mills Cor}\widehat{Q}_{YM,a}(b)=-\frac{d_{A_{O}}^{\star}[b,a]}{2}+(-1)^{n}\star[b,\star d_{A_{O}}a ]+\frac{(-1)^{n}}{2}\star[b,\star [a,a] ].\end{equation}
Hence $Q_{YM}(a)=\widehat{Q}_{YM,a}(a)$, and we can write 
(\ref{equ Yang Mills equation}) in cylindrical coordinates as 
\begin{equation}\label{equ 4 proof Yang Mills Cor}
P_{YM}(a)\triangleq \{e^{-2t}(\Delta_{A_{O},Hodge}+(-1)^{n+1}\star[\cdot,\star F_{A_{O}}])-e^{-2t}\widehat{Q}_{YM,a}\}a= -e^{-2t}d_{A_{O}}^{\star}F_{A_{O}}.
\end{equation}
The conditions on $g$ , $a$, (\ref{equ 1 proof Yang Mills Cor}), and (\ref{equ 2 proof Yang Mills Cor})  implies  $P_{YM}$ is ATID in the cylindrical coordinates. Moreover, 
that  $d_{A_{O}}^{\star_{E}}F_{A_{O}}=0$ (tangent connection is Yang-Mills) implies  $-e^{-2t}d_{A_{O}}^{\star}F_{A_{O}}\in C^{\alpha}_{1,0}(\overline{Cyl}_{-\log R})$ (exponential decay). Applying $Q^{P_{YM},-2\log R}_{0,+}$ (=$Q^{P_{YM},-2\log R}_{\underline{0},-}$) to both sides of (\ref{equ 4 proof Yang Mills Cor}), Lemma \ref{lem regularity of S0} $\textbf{III}$ implies that  $a$ decays exponentially. This in turn means $P_{YM}$ satisfies $\circledS(l)$ for all $l>1$. Then  Corollary \ref{Cor regularity Yang Mills} $I$  follows from applying   Proposition \ref{Prop regularity of Harmonic sections} to $a+Q^{P_{YM},-2\log R}_{0,+}e^{-2t}d_{A_{O}}^{\star}F_{A_{O}}\in KerP_{YM}$ and  Theorem \ref{Thm local invertibility for ATID} to $Q^{P_{YM},-2\log R}_{0,+}e^{-2t}d_{A_{O}}^{\star}F_{A_{O}}$.

  Combining $I$ and Lemma \ref{lem Coulomb gauge}, the proof of $II$ is complete.  
\subsection{Minimal Surfaces: proof for Corollary \ref{Cor Minimal surfaces}}
 By \cite[Section 5, page 247]{AdamSimon}, in the cylindrical coordinate $t=\log \frac{1}{|x|}$, the graph type minimal sub-manifold equation can be written as (\ref{equ 1 in proof Cor Minimal surface}) in  terms of a section $u$ to $T^{\perp}\Sigma|_{\mathbb{S}^{N}}$ (the normal bundle of $\Sigma$ in $\mathbb{S}^{N}$). We note (the transition functions of) $T^{\perp}\Sigma|_{\mathbb{S}^{N}}$ does not depend on $t$ (or $|x|$), then this bundle is in the case considered by Definition \ref{Def TID operators}. 
\begin{equation}\label{equ 1 in proof Cor Minimal surface}
P_{MSM}u\triangleq Lu+\mathfrak{R}(u)+(M_{\Sigma}-L_{\Sigma})u=0,\ \textrm{where}\ 
\end{equation}
\begin{itemize}
\item $Lu=u^{\prime\prime}-(n+1)u^{\prime}+L_{\Sigma}u$ (see \cite[Page 565]{LSimon} and  \cite[(1.8)]{AdamSimon}),
\item $\mathfrak{R}(u)$ satisfies  \cite[(1.9)]{LSimon},  $L_{\Sigma}$ is the linearisation of $M_{\Sigma}$ (see \cite[(5.2) and Page 248]{AdamSimon}.
\end{itemize}
 By the idea in (\ref{equ 3 proof Yang Mills Cor}), we  can view $\mathfrak{R}+M_{\Sigma}-L_{\Sigma}$ as a linear operator i.e.
\begin{eqnarray*}\label{equ 1 proof of Cor Minimal Surface}
(\mathfrak{R}+M_{\Sigma}-L_{\Sigma})(v)\triangleq Q_{MSM}(v)&=&a(x,t,u, \nabla u,\nabla^{2}u,u^{\prime})\cdot \nabla^{2}v+b(x,t,u, \nabla u,\nabla^{2}u)\cdot v^{\prime}\\& &+c(x,t,u, \nabla u,\nabla^{2}u)\cdot \nabla v^{\prime}+d(x,t,u, \nabla u,\nabla^{2}u)\cdot  v^{\prime\prime}\nonumber
\end{eqnarray*}
where $a,\ b,\ c,\ \nabla$ can be found in \cite[the paragraph enclosing (1.9); between line 1 in page 565 and (7.35)]{LSimon}.   Exactly as  the proof of Corollary \ref{Cor regularity Yang Mills},  $u$ decays as $\frac{\delta_{0}}{t}$ implies $P_{MSM}$ is $0-$ATID. Thus Proposition \ref{Prop regularity of Harmonic sections} [and the discussion below (\ref{equ 4 proof Yang Mills Cor})]   yields the desired improvement.
 \section{Appendix}\begin{proof}[\textbf{Proof of Theorem \ref{Thm local invertibility} (\textbf{i}) for second-order operators}:] We mainly focus on the case when $a_{1}-2\beta>0$.  We solve the second-order ODE in (\ref{equ the simple ODE  we need to solve 1st order operator}) according to the following.
\begin{equation}\label{equ tabular choice of solution 2nd order operator m neq 0}  \begin{tabular}{|p{0.1cm}|p{2.5cm}|p{9cm}|}
  \hline
 &   $m\triangleq a_{1}-2\beta$. Assume $m>0$.    & Corresponding solution $u_{\lambda}\triangleq \dot{Q}^{P^{0}_{\beta}}_{\lambda}f_{\lambda}$ and the derivative. $Det\triangleq m^{2}+4\lambda$, \ \    $\mu\triangleq \frac{\sqrt{|Det|}}{2}$, \ $\mu^{+}\triangleq \frac{m}{2}+\mu$, $\mu^{-}\triangleq \frac{m}{2}-\mu$  \\   \hline

 1& $Det >0$,   $\lambda\neq 0$ & $u_{\lambda}=-\frac{1}{\sqrt{Det}}[e^{\mu^{+}t}\int^{\infty}_{t}e^{-\mu^{+}s}f_{\lambda}ds+e^{\mu^{-}t}\int^{t}_{1}e^{-\mu^{-}s} f_{\lambda}ds]$, $u^{\prime}_{\lambda}=-\frac{1}{\sqrt{Det}}[\mu^{+}e^{\mu^{+}t}\int^{\infty}_{t}e^{-\mu^{+}s}f_{\lambda}ds+\mu^{-}e^{\mu^{-}t}\int^{t}_{1}e^{-\mu^{-}s} f_{\lambda}ds]$  \\   \hline
  2&    $\lambda=0$,  $b>\frac{1}{2}$ & $u_{\lambda}=\frac{1}{\sqrt{Det}}\{-e^{\mu^{+}t}\int^{\infty}_{t}e^{-\mu^{+}s}f_{\lambda}ds+\int^{\infty}_{t} f_{\lambda}ds\}$, $u^{\prime}_{\lambda}=-\frac{\mu^{+}}{\sqrt{Det}}e^{\mu^{+}t}\int^{\infty}_{t}e^{-\mu^{+}s}f_{\lambda}ds$  \\   \hline

 3&    $\lambda=0$,  $b<\frac{1}{2}$ & $u_{\lambda}=\frac{1}{\sqrt{Det}}\{-e^{\mu^{+}t}\int^{\infty}_{t}e^{-\mu^{+}s}f_{\lambda}ds-\int^{t}_{1} f_{\lambda}ds\}$,\ $u^{\prime}_{\lambda}=-\frac{\mu^{+}}{\sqrt{Det}}e^{\mu^{+}t}\int^{\infty}_{t}e^{-\mu^{+}s}f_{\lambda}ds$  \\   \hline
4&$Det =0$ &    $u_{\lambda}= e^{\frac{mt}{2}}\int^{\infty}_{t}e^{-\frac{ms}{2}}f_{\lambda}(s-t)ds$  \\    \hline

5&$Det<0$  &   $u_{\lambda}=\frac{e^{\frac{mt}{2}}}{\mu} \{\int^{\infty}_{t}e^{-\frac{ms}{2}}f_{\lambda}[\cos\mu t\sin(\mu s)- \sin\mu t \cos(\mu s)]ds\}$ \\    \hline
  \end{tabular}
 \renewcommand\arraystretch{1.5}
  \end{equation}

\textbf{Case 1, 5 in (\ref{equ tabular choice of solution 2nd order operator m neq 0}):}  When $\lambda\neq 0$, we have $\frac{\sqrt{|\lambda|}}{C}\leq \mu^{+}\leq C\sqrt{|\lambda|}$,  $-\frac{\sqrt{|\lambda|}}{C}\leq \mu^{-}\leq -C\sqrt{|\lambda|}$. We use (\ref{equ Lem Hardy inequality exponential case}), (\ref{equ Lem Hardy inequality exponential case mu<}) to estimate $u_{\lambda}$ and $u^{\prime}_{\lambda}$ termwise, then use the second-order equation in (\ref{equ the simple ODE  we need to solve 1st order operator}) to estimate $u^{\prime\prime}_{\lambda}$. Then  we obtain the following for Case 1 and 4. 
\begin{eqnarray}\label{eqnarray 4 in proof of main local Sobolev thm for 2nd order operators}
 \int_{1}^{\infty} |u^{\prime\prime}_{\lambda}|^{2}t^{2\gamma}dt+|\lambda|\int_{1}^{\infty} |u^{\prime}_{\lambda}|^{2}t^{2\gamma}dt+\lambda^2\int_{1}^{\infty} u^{2}_{\lambda}t^{2\gamma}dt\leq   C \int_{1}^{\infty} f_{\lambda}^{2}t^{2\gamma}dt. 
  \end{eqnarray}
Since $|\cos x|,\ |\sin x|\leq 1$ for all $x$, and $\frac{1}{\sqrt{\lambda}}\leq C$ when $\lambda\neq 0$ [$Spec(B)$ is discrete], (\ref{eqnarray 4 in proof of main local Sobolev thm for 2nd order operators}) holds for Case 5 in similar way. 

\textbf{Cases 2,3 in (\ref{equ tabular choice of solution 2nd order operator m neq 0}):} We still 
do the termwise estimates. Using  (\ref{equ Lem Hardy inequality exponential case}) for $e^{\mu^{+}t}\int^{\infty}_{t}e^{-\mu^{+}s}f_{\lambda}ds$,   (\ref{equ Lem Hardy inequality exponential case mu<}) for $e^{-\mu^{+}t}\int^{t}_{1}e^{\mu^{+}s}f_{\lambda}ds$, and  (\ref{equ lem Hardy's inequality polynomial case}) for $\int^{\infty}_{t}f_{\lambda}ds$, we obtain
 \begin{equation}\label{equ 5 in proof of main local Sobolev thm for 2nd order operators}
  \int_{1}^{\infty} |u^{\prime\prime}_{\lambda}|^{2}t^{2b}dt+\int_{1}^{\infty} |u^{\prime}_{\lambda}|^{2}t^{2b}dt+ \int_{1}^{\infty} u^{2}_{\lambda}t^{2b-2}dt \leq C \int_{1}^{\infty} f_{\lambda}^{2}t^{2b}dt.
  \end{equation}
 Remark \ref{Rmk Fourier interpretation of the norms},   (\ref{eqnarray 4 in proof of main local Sobolev thm for 2nd order operators}), (\ref{equ 5 in proof of main local Sobolev thm for 2nd order operators})   amount to $|\widetilde{Q}^{P^{0}_{\beta}}_{0,b}f|_{\widehat{W}^{2,2}_{0,\gamma,b-1}(Cyl_{1})}\leq  C|f|_{L^{2}_{0,\gamma,b}(Cyl_{1})}$. The argument between (\ref{equ est for Q0 proof Thm local invertibility simpliest case}) and (\ref{equ 1 Proof of Theorem  local invertibility i}) gives the desired local right inverse $Q^{P^{0},t_{0}}_{\beta,b}$  with the desired bound. 
 
 The proof when $a_{1}-2\beta<0$ is by the same analysis, but to the following table of solutions  [using the definitions in the first row of (\ref{equ tabular choice of solution 2nd order operator m neq 0}) but assuming $m=a_{1}-2\beta<0$].
 \begin{equation}\label{equ tabular choice of solution 2nd order operator a1-2beta<0}  \begin{tabular}{|p{0.1cm}|p{2.4cm}|p{9cm}|}
  \hline
  1& $Det >0$,   $\lambda\neq 0$ & the same as  Case 1 in (\ref{equ tabular choice of solution 2nd order operator m neq 0}) \\   \hline
  2&    $\lambda=0$,  $b>\frac{1}{2}$ & $u_{\lambda}=\frac{1}{\sqrt{Det}}\{-\int^{\infty}_{t}f_{\lambda}ds-e^{\mu_{-}t}\int^{t}_{1} e^{-\mu_{-}t}f_{\lambda}ds\}$ \\   \hline

 3&    $\lambda=0$,  $b<\frac{1}{2}$ & $u_{\lambda}=\frac{1}{\sqrt{Det}}\{-e^{\mu^{-}t}\int^{t}_{1}e^{-\mu^{-}s}f_{\lambda}ds+\int^{t}_{1} f_{\lambda}ds\}$  \\   \hline
4&$Det =0$ &    $u_{\lambda}= e^{\frac{mt}{2}}\int^{t}_{1}e^{-\frac{ms}{2}}f_{\lambda}(t-s)ds$  \\    \hline

5&$Det<0$ &   $u_{\lambda}= -\frac{e^{\frac{mt}{2}}}{\mu} \{\int^{t}_{1}e^{-\frac{ms}{2}}f_{\lambda}[\cos\mu t\sin(\mu s)- \sin\mu t \cos(\mu s)]ds\}$ \\    \hline
  \end{tabular}
 \renewcommand\arraystretch{1.5}
  \end{equation}

The proof when $m=a_{1}-2\beta=0$ and $\lambda> 0$ is much easier: we only need to  use (exactly)  the solutions in  Case 1 of (\ref{equ tabular choice of solution 2nd order operator a1-2beta<0}). Case 2--5 are super-indicial (which  we don't consider).  \end{proof}

 In the cone setting as Corollary \ref{Cor regularity Yang Mills}, let $\nabla_{A_{O}}^{\star}\nabla_{A_{O}}$ denote the rough Laplacian on $\Omega^{0}(adE)$.  By \cite[(17)]{Myself2016bigpaper}, in cylindrical coordinate $t=-\log r$,
 \begin{equation}\label{equ cyl formula for rough laplacian on 0forms}
 \nabla_{A_{O}}^{\star}\nabla_{A_{O}}\chi=e^{2t}\{\frac{d^{2}\chi}{dt^{2}}-(n-2)\frac{d\chi}{dt}+\Delta_{A_{O},\mathbb{S}^{n-1}}\chi\}, 
 \end{equation}
 where $\Delta_{A_{O},\mathbb{S}^{n-1}}$ is the rough Laplacian  on the link.  Let  $C^{k,\alpha}_{\gamma,b,cone}$ denote the weighted Schauder-spaces in the cone setting defined in \cite[Definition 2.10]{Myself2016bigpaper}. On $0-$forms, we note that pulling back  is an isomorphism $C^{k,\alpha}_{\gamma,b,cone}[B_{O}(R)]\rightarrow C^{k,\alpha}_{-\gamma,b}(Cyl_{-\log R})$. 
 \begin{Def} \label{Def Irreducible connections} We say that $A_{O}$ is irreducible if every parallel section to $\Omega^{0}(ad E)$ is $0$ everywhere. This means $0$ is not an eigenvalue of $\Delta_{A_{O},\mathbb{S}^{n-1}}$. 
 \end{Def}
 \begin{lem}\label{lem Coulomb gauge}(Existence of Coulomb gauge) Under the setting in the first paragraph of Corollary \ref{Cor regularity Yang Mills} (for any $n\geq 3$), suppose $A_{O}$ is irreducible. Suppose $A$ is a  $C^{2}-$connection on $B_{O}(R)\setminus O$ such that
 \begin{equation}
 |A-A_{O}|_{C^{1,\alpha}_{1,b,cone}[B_{O}(R)])}<\delta_{0}\ \textrm{for some}\ b\geq 0.
 \end{equation}
 Then there exists a gauge $s$ on $B_{O}(R)$ such that 
 \begin{itemize}
 \item $|s-Id|_{C^{2,\alpha}_{0,b,cone}[B_{O}(R)]}\leq C|A-A_{O}|_{C^{1,\alpha}_{1,b,cone}[B_{O}(R)])}\leq C\delta_{0}$,
 \item $d^{\star}_{A_{O}}[s(A)-A_{O}]=0$ in a smaller (truncated) ball $B_{O}(R^{\prime})\setminus O$, and \begin{equation}
 |S(A)-A_{O}|_{C^{1,\alpha}_{1,b,cone}[B_{O}(R^{\prime})-O])}\leq C|A-A_{O}|_{C^{1,\alpha}_{1,b,cone}[B_{O}(R)-O])}\leq C\delta_{0}.
 \end{equation} 
 \end{itemize}
\end{lem}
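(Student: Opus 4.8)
The plan is to fix the Coulomb gauge by a contraction mapping whose linearisation at $(A,s)=(A_{O},Id)$ is the covariant Laplacian $\nabla^{\star}_{A_{O}}\nabla_{A_{O}}$ on $\Omega^{0}(adE)$, which I will invert using Theorem \ref{Thm local invertibility for ATID}; irreducibility of $A_{O}$ is precisely what places the required weight in the non-indicial regime. Concretely, I would look for $s=\exp\chi$ with $\chi$ a section of $adE$ (so $s$ is automatically $U(m)$- or $SO(m)$-valued), expand $s(A)=sAs^{-1}-(ds)s^{-1}$ together with the exponential series to obtain
\[
s(A)-A_{O}=(A-A_{O})-d_{A_{O}}\chi+N(\chi,\,A-A_{O}),
\]
with $N$ collecting all the terms of order $\geq 2$, and then impose $d^{\star}_{A_{O}}[s(A)-A_{O}]=0$ (divergence with respect to $g$, or to $g_{E}$). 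Since $d^{\star}_{A_{O}}d_{A_{O}}=\nabla^{\star}_{A_{O}}\nabla_{A_{O}}$ on $0$-forms, the Coulomb condition becomes the semilinear elliptic equation
\[
\nabla^{\star}_{A_{O}}\nabla_{A_{O}}\chi=d^{\star}_{A_{O}}(A-A_{O})+d^{\star}_{A_{O}}N(\chi,\,A-A_{O})=:\mathcal{F}(\chi).
\]

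For the linear step, by (\ref{equ cyl formula for rough laplacian on 0forms}) and $g(O)=g_{E}$ the operator $\nabla^{\star}_{A_{O}}\nabla_{A_{O}}$ equals, in cylindrical coordinates $t=-\log r$, $e^{2t}\{\tfrac{\partial^{2}}{\partial t^{2}}-(n-2)\tfrac{\partial}{\partial t}+\Delta_{A_{O},\mathbb{S}^{n-1}}\}$ up to coefficients that decay at $O$; hence it is a second-order, Laplace-type $0$-ATID operator with model $B_{P^{0}}=-\Delta_{A_{O},\mathbb{S}^{n-1}}$ on $0$-forms (genuinely TID for the $g_{E}$-divergence). By Definition \ref{Def Indicial weights}, the weight $\beta=0$, which under the pull-back isomorphism on $0$-forms corresponds to $\chi\in C^{2,\alpha}_{0,b,cone}$, is $P^{0}$-indicial if and only if $0\in Spec(\Delta_{A_{O},\mathbb{S}^{n-1}})$, i.e. if and only if $A_{O}$ is reducible (Definition \ref{Def Irreducible connections}); and since $n\geq 3$ one has $\beta=0\neq\frac{n-2}{2}$, so $\beta=0$ is never super-indicial. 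With $A_{O}$ irreducible, $\beta=0$ is not $P^{0}$-indicial and $\Ker B_{P^{0}}=0$, so the spectral projection $\xi^{\parallel}$ of Definition \ref{Def graph norm} vanishes and the graph-norm space collapses to the ordinary weighted Schauder space (recall $b\geq 0$). The non-indicial Schauder estimate (\ref{equ 4 in Thm local invertibility}) of Theorem \ref{Thm local invertibility for ATID} then provides, on $B_{O}(R')\setminus O$ with $R'$ small enough that the ATID constant of $\nabla^{\star}_{A_{O}}\nabla_{A_{O}}$ there is $\leq\delta_{0}$, a bounded right inverse $G$ with bound independent of $R'$ (via the extension Claim \ref{Clm extension of sections}); the shrinking $R\rightsquigarrow R'$ is needed only for the $g$-divergence.

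Finally I would run the contraction $\Phi:=G\circ\mathcal{F}$ on the ball $\{\,|\chi|_{C^{2,\alpha}_{0,b,cone}}\leq C_{0}\delta_{0}\,\}$ in $C^{2,\alpha}_{0,b,cone}[B_{O}(R')\setminus O]$. Using the multiplicative estimates for the conic weighted Schauder spaces from \cite{Myself2016bigpaper}, $N(\chi,A-A_{O})$ is quadratic-or-higher, and after $d^{\star}_{A_{O}}$ it lands in the target space of $G$ with norm $\lesssim(|\chi|+|A-A_{O}|)^{2}$ in the relevant norms; hence $\Phi$ maps the ball into itself, $|\Phi(0)|\leq C|A-A_{O}|_{C^{1,\alpha}_{1,b,cone}}$, and $\Phi$ contracts once $\delta_{0}$ is small. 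The fixed point $\chi$ gives $s=\exp\chi$ with $|s-Id|_{C^{2,\alpha}_{0,b,cone}}\leq C|\chi|\leq C|A-A_{O}|_{C^{1,\alpha}_{1,b,cone}}$, the displayed expansion of $s(A)-A_{O}$ then yields the bound on $s(A)-A_{O}$, and the Coulomb condition holds on $B_{O}(R')\setminus O$ because there the fixed-point equation is exactly the elliptic equation. I expect the main obstacle to be the bookkeeping in the linear step: matching the conic weighted spaces of \cite{Myself2016bigpaper} to the cylindrical graph spaces for which Theorem \ref{Thm local invertibility for ATID} is stated, checking that the right-inverse bound is uniform as $R'\to 0$, and verifying that every term of $N$ carries the extra power of $r$ that makes $d^{\star}_{A_{O}}N$ land in the same space as $d^{\star}_{A_{O}}(A-A_{O})$; all of this is routine but must be done carefully.
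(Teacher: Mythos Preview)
Your proposal is correct and follows essentially the same route as the paper: write $s=\exp(\pm\chi)$, observe that the linearisation of the Coulomb map in $\chi$ is the rough Laplacian on $\Omega^{0}(adE)$, invoke (\ref{equ cyl formula for rough laplacian on 0forms}) and Theorem \ref{Thm local invertibility for ATID} with $\beta=0$ non-indicial (irreducibility) and non-super-indicial ($n\geq 3$) to invert it on $C^{2,\alpha}_{0,b,cone}\to C^{\alpha}_{2,b,cone}$ for $R'$ small, and then close by a fixed-point argument. The only cosmetic differences are that the paper linearises at $A$ rather than at $A_{O}$ (which it may, since $|A-A_{O}|<\delta_{0}$ keeps the operator ATID and invertible) and packages the nonlinear step as the implicit function theorem via \cite[Proposition 2.3.4]{DonaldsonKronheimer} instead of your explicit contraction $\Phi=G\circ\mathcal{F}$.
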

\begin{proof} Let $a\triangleq A-A_{O}$. By Theorem \ref{Thm local invertibility for ATID} and paragraph enclosing (\ref{equ cyl formula for rough laplacian on 0forms}), for any $b\geq 0$ and $R^{\prime}$  small enough, $\nabla_{A}^{\star}\nabla_{A}$ is invertible: $C^{2,\alpha}_{0,b,cone}[B(R^{\prime})]|_{\Omega^{0}(adE)}\longrightarrow\ C^{\alpha}_{2,b,cone}[B(R^{\prime})]|_{\Omega^{0}(adE)}$. Writing $s=e^{-\chi}$, then
\begin{equation}
d^{\star}_{A_{O}}[s(A)-A_{O}]=d^{\star}_{A_{O}}[s^{-1}as+s^{-1}d_{A_{O}}s]=d^{\star}_{A_{O}}[e^{-\chi}a e^{\chi}+e^{-\chi}d_{A_{O}}e^{\chi}]
\end{equation}
is a continuously differentiable map:
$$C^{1,\alpha}_{1,b,cone}[B_{O}(R)]|_{\Omega^{1}(adE)}\times C^{2,\alpha}_{0,b,cone}[B_{O}(R)]|_{\Omega^{0}(adE)}\longrightarrow C^{\alpha}_{2,b,cone}[B_{O}(R)]|_{\Omega^{0}(adE)}.$$ Then the proof is complete  by the standard argument in \cite[Proposition 2.3.4]{DonaldsonKronheimer}.
\end{proof}

  \begin{proof}[\textbf{Proof of  (\ref{equ Lem Hardy inequality exponential case}),\ (\ref{equ Lem Hardy inequality exponential case mu<})}:] For any $s\geq \frac{1}{10}$, leaving  the proof to the readers, we have
 \begin{equation}\label{equ 1 in clm in proof lem hardy}|\int^{s}_{1}e^{\mu t}t^{d}dt|\leq C_{l_{\mu},d}\frac{e^{\mu s}s^{d}}{\mu}\ \textrm{when}\ \mu>0,\ \int^{\infty}_{s}e^{\mu t}t^{d}dt\leq C_{l_{\mu},d}\frac{e^{\mu s}s^{d}}{-\mu}\ \textrm{when}\ \mu<0. 
 \end{equation}
For (\ref{equ Lem Hardy inequality exponential case}),  H\"older's inequality and the change of variable $z=s-t$ yield\begin{eqnarray}& &\nonumber(\int^{\infty}_{t}e^{-\mu s}f(s-t)^{\vartheta}ds)^{p}\leq (\int^{\infty}_{t}e^{-\mu s}f^{p}ds)(\int^{\infty}_{t}e^{-\mu s}(s-t)^{\frac{p\vartheta}{p-1}}ds)^{p-1}\\
&\leq &C(\int^{\infty}_{t}e^{-\mu s}f^{p}ds)\frac{e^{-\mu(p-1) t}}{\mu^{\vartheta p+p-1}}) \label{equ 1  in proof lem hardy}.\end{eqnarray}
 Then (\ref{equ 1 in clm in proof lem hardy}) and (\ref{equ 1  in proof lem hardy})  yield 
 \begin{eqnarray*}
& & \int^{\infty}_{\frac{1}{10}}(e^{\mu t}t^{b}\int^{\infty}_{t}e^{-\mu s}f(s-t)^{\vartheta}ds)^{p}dt\leq \frac{1}{\mu^{p\vartheta+p-1}}(\int^{\infty}_{\frac{1}{10}}e^{-\mu s}f^{p}ds)(\int^{s}_{\frac{1}{10}}e^{\mu t}t^{pb}dt)
 \\&\leq & \frac{C_{l_{\mu},p,b}}{\mu^{p(1+\gamma)}}\int^{\infty}_{0}f^{p}s^{pb}ds.\ \ \ \ \textrm{The proof of  (\ref{equ Lem Hardy inequality exponential case}) is complete}.
 \end{eqnarray*}
 The proof of  (\ref{equ Lem Hardy inequality exponential case mu<}) is similar.\end{proof}
 \begin{proof}[\textbf{Proof of  Remark \ref{Rmk not Fredholm}}:]   We only have to show the "only if".  Without loss of generality, we only consider the Schauder theory, and   assume $m_{0}=1$, $\gamma=b=1,\ \overrightarrow{\beta}=0$, $\sigma_{1}=\sigma_{2}=Id$.  For any $\phi_{0}\in Ker B_{P^{0}}$,  let $f=\frac{\phi_{0}}{t}$. When $P=\frac{\partial }{\partial t}-B$ (translation-invariant on the end) and $t\geq 10$, the general solution to $Pu=\frac{\phi_{0}}{t}$ is $(\log t+C)\phi_{0}\notin \widehat{C}^{1,\alpha}_{0,0}$. Hence any extension $\widetilde{f} $  of $f$ to the whole $N$ is not in $ Range P|_{\widehat{C}^{1,\alpha}_{0,0}}$.
 
     On the other hand, let $f_{k}=\frac{\phi_{0}}{t^{1+\frac{1}{k}}}$,  Theorem \ref{Thm local invertibility}  implies  each $f_{k}$ admits an extension  $\widetilde{f}_{k}\in Range P|_{\widehat{C}^{1,\alpha}_{0,0}}$, and $\widetilde{f}_{k}$ tends to $\widetilde{f}\in C^{\alpha}_{0,1}$ which extends $f$. Then  $Range P|_{\widehat{C}^{1,\alpha}_{0,0}}$ is not closed in $C^{\alpha}_{0,1}$.\end{proof} 
\small

\end{document}